\theoremstyle{plain}
\newtheorem{theorem}{Theorem}[section]
\newtheorem{thm}[theorem]{Theorem}
\newtheorem{cor}[theorem]{Corollary}
\newtheorem{lem}[theorem]{Lemma}
\newtheorem{prop}[theorem]{Proposition}
\newcounter{kludge}
\newcounter{kludgeb}
\theoremstyle{definition}
\newtheorem{defn}[theorem]{Definition}
\newtheorem{rmk}[theorem]{Remark}
\newtheorem{ex}[theorem]{Example}
\newtheorem{hyp}[theorem]{Hypothesis}
\theoremstyle{remark}
\newcommand{\marpar}[1]{}
\newcommand{\mni}{\medskip\noindent}
\newcommand{\mbb}{\mathbb}
\newcommand{\ZZ}{\mbb{Z}}
\newcommand{\PP}{\mbb{P}}
\newcommand{\mc}{\mathcal}
\newcommand{\mcA}{\mc{A}}
\newcommand{\mcB}{\mc{B}}
\newcommand{\mcC}{\mc{C}}
\newcommand{\mcH}{\mc{H}}
\newcommand{\mcL}{\mc{L}}
\newcommand{\mcM}{\mc{M}}
\newcommand{\mcN}{\mc{N}}
\newcommand{\mcP}{\mc{P}}
\newcommand{\mcT}{\mc{T}}
\newcommand{\mcX}{\mc{X}}
\newcommand{\mf}{\mathfrak}
\newcommand{\OO}{\mc{O}}
\newcommand{\wt}{\widetilde}
\newcommand{\ol}{\overline}
\newcommand{\ul}{\underline}
\newcommand{\SP}{\text{Spec }}
\newcommand{\Pic}[2]{\text{Pic}^{#1}_{#2}}
\newcommand{\Gm}[1]{\mathbb{G}_{#1}}
\newcommand{\pr}[1]{\text{pr}_{#1}}
\newcommand{\Cc}{C}
\newcommand{\Lb}{\Lambda}
\newcommand{\sS}{\mathsf{S}}
\newcommand{\lt}{\left}
\newcommand{\rt}{\right}
\newsavebox{\sembox}
\newlength{\semwidth}
\newlength{\boxwidth}
\newsavebox{\semrbox}
\newlength{\semrwidth}
\newlength{\boxrwidth}
\title
{Intersection Sheaves for Abel Maps} 
\author[Starr]{Jason Michael Starr}
\address{Department of Mathematics \\
  Stony Brook University \\ Stony Brook, NY 11794}
\email{jstarr@math.stonybrook.edu} 
\date{\today}
\begin{document}

%%%%%%%%%%%%%%%%%%%%%%%%%%%%%%%%%%%%%%%%%%%%%%%%%%%%%%%%%%%%%%%%%%%%
%%
%% Abstract
%%
%%%%%%%%%%%%%%%%%%%%%%%%%%%%%%%%%%%%%%%%%%%%%%%%%%%%%%%%%%%%%%%%%%%%

\begin{abstract}
Intersection sheaves, i.e., the Deligne pairings, were first introduced
by Deligne in the setting of Poincar\'{e} duality for \'{e}tale
cohomology, and later in his work on the determinant of cohomology.  
Intersection sheaves were generalized from smooth schemes to
Cohen-Macaulay schemes by Elkik, and then beyond Cohen-Macaulay
schemes by Munoz-Garcia.  To define the Abel maps arising in rational
simple connectedness on the natural parameter spaces of rational sections,
we need a variant
of the construction of Munoz-Garcia that has the good properties of
the construction using det and Div.  In addition, we prove basic
properties of the classifying stacks that arise in the definition of
Abel maps.
\end{abstract}

%%%%%%%%%%%%%%%%%%%%%%%%%%%%%%%%%%%%%%%%%%%%%%%%%%%%%%%%%%%%%%%%%%%%%%
%%
%% Body
%%
%%%%%%%%%%%%%%%%%%%%%%%%%%%%%%%%%%%%%%%%%%%%%%%%%%%%%%%%%%%%%%%%%%%%%%

\maketitle

%% \tableofcontents

%%%%%%%%%%%%%%%%%%%%%%%%%%%%%%%%%%%%%%%%%%%%%%%%%%%%%%%%%%%%%%%
%%
%% Section: Introduction
%% 
%%%%%%%%%%%%%%%%%%%%%%%%%%%%%%%%%%%%%%%%%%%%%%%%%%%%%%%%%%%%%%%

\section{Introduction} \label{sec-intro}
\marpar{sec-intro}

\mni
The Deligne pairing, or intersection sheaf, 
was introduced for families of smooth,
proper 
curves in
\cite[Expos\'{e}
XVIII]{SGA4T3} as part of the proof of Poincar\'{e} duality in \'{e}tale
cohomology.   
It was developed for more general smooth morphisms
in \cite{DeligneDet}
where Deligne also enriched the determinant of cohomology in many
ways.  In contrast to the Chow-theoretic 
pushforward of an intersection product of
divisor classes, which it closely mirrors, the
intersection sheaf respects nilpotence of the target (\emph{some}
versions of Chow theory do not), it is \emph{integral} in the sense
that there are no denominators, and it is defined for a robust class
of targets including mixed characteristic schemes.  It is well-adapted
to refined versions such as in Arakelov theory.  Finally, and this is
crucial here, via its additivity property it extends from
$\Gm{m}$-torsors, i.e., invertible sheaves, to torsors for more
general group schemes of multiplicative type. 

\mni
Intersection sheaves satisfying the axioms of the original
construction were extended to families of (not-necessarily-smooth)
Cohen-Macaulay schemes in 
\cite{Elkik}, and finally extended to
non-Cohen-Macaulay schemes in 
\cite{MunozGarcia}.
The
construction of the Abel map in \cite{dJHS} is a special case of an
intersection sheaf in relative dimension $0$.  The intersection sheaf
in relative dimension $0$ extends 
the usual norm of invertible sheaves and Cartier divisors 
for a finite, flat morphism to morphisms that are not
necessarily finite and flat.  The construction in \cite{dJHS}
in terms of det
and Div, cf. \cite{detdiv}, is essentially the original construction.
In addition
to satisfying the axioms of intersection sheaves, the det-Div 
construction gives a
formula for the contribution to the intersection sheaf from the locus
where the morphism is not finite and flat.  This is crucial in
\cite{dJHS}: that formula implies that the Abel
maps are compatible with the inductive structure on the moduli spaces
of stable sections coming from ``boundary'' correspondences between
these moduli spaces.  

\mni
In positive characteristic, the stacks of stable sections are not
Deligne-Mumford stacks.  Even in positive characteristic, the Hilbert
scheme is a projective scheme that
parameterize sections and their specializations.  
The Hilbert scheme parameterizes closed subschemes that
are not necessarily Cohen-Macaulay.  
Thus, the appropriate version of intersection
sheaves is \cite{MunozGarcia}.
However, the
formula for the contribution to the intersection sheaf from the
non-flat locus is not part of that construction.
Our main result is that the 
construction of \cite{MunozGarcia} gives the same intersection
sheaves as the det-Div construction
in our setting.  The result, Proposition \ref{prop-intersect}, 
shows that the det-Div construction of the
intersection sheaf extends to the non-Cohen-Macaulay setting and satisfies
many of the usual axioms: enough axioms 
so that it agrees with every other construction
satisfying these axioms, including the construction of \cite{MunozGarcia}.

\mni
Since the intersection sheaves are multiadditive, they naturally
extend to torsors for $\Gm{m}^\rho$ for $\rho \geq 1$, i.e., ordered
$\rho$-tuples of invertible sheaves.
Combined with
fppf descent, which we quickly review in Section \ref{sec-torsors},
this leads to an extension of intersection sheaves to
\emph{intersection torsors} for more general group schemes than
$\Gm{m}^\rho$.  
Of course
this was part of Deligne's original construction, allowing him to
define the intersection pairing on torsors for finite, flat,
commutative group schemes.  
To define Abel maps for fibrations of higher Picard rank $\rho\geq 1$,
the relevant group
schemes are
tori that are not necessarily split.  
Corollary \ref{cor-MG2} gives the extension of intersection
sheaves to intersection torsors for tori.

\mni
The strategy of the proof of existence of
sections in \cite{dJHS} and \cite{Zhu} 
is to ascend up the tower of moduli spaces of stable sections via the boundary
correspondences and to prove that ``eventually'' the fibers of the Abel map
are rationally connected.  The terms in this tower are indexed by the
degree of a torsor on a curve together with its natural partial
ordering, so degrees of torsors are also important.  Proposition
\ref{prop-degree} gives a \emph{degree map} satisfying some natural
properties. 

%%%%%%%%%%%%%%%%%%%%%%%%%%%%%%%%%%%%%%%%%%%%%%%%%%%%%%%%%%%%%%%
%%
%% Section: Axioms and the Basic Construction
%% 
%%%%%%%%%%%%%%%%%%%%%%%%%%%%%%%%%%%%%%%%%%%%%%%%%%%%%%%%%%%%%%%

\section{Axioms and the Basic Construction} \label{sec-axioms}
\marpar{sec-axioms}

\mni
For every morphism of schemes $\pi:C \to T$ that is 
proper, that is fppf of pure relative dimension $d$, and 
that is cohomologically flat in degree $0$, the relative Picard
functor is an algebraic space over $T$, $\Pic{}{C/T}$.  

\begin{defn} \label{defn-intshf} \marpar{defn-intshf}
Let $\pi$ be a morphism that is proper, fppf of pure relative
dimension $d$, and cohomologically flat in degree $0$.
An \emph{intersection datum} for $\pi$ is a tuple 
$$
(n,T_0\to T,p_0:Y_0\to T_0,g_0:Y_0\to C_0, (\mcL_0,\dots,\mcL_n))
$$
of an integer $n\geq 0$, a
$T$-scheme $T_0$, a
proper, fppf morphism $p_0$ of relative dimension $\leq
d+n$ that is projective fppf locally over $T_0$, a proper, perfect
morphism $g_0$ of $T_0$-schemes, and an $(n+1)$-tuple of invertible
sheaves on $Y_0$.  

\mni
For fixed $(n,T_0,p_0,g_0)$, an \emph{isomorphism} between
$(n+1)$-tuples $(\mcL_0,\dots,\mcL_n)$ and $(\mcL'_0,\dots,\mcL'_n)$ is
an $(n+1)$-tuple of isomorphisms of invertible sheaves $\mcL_i\to
\mcL'_i$.  
For a datum as above and for a morphism of $T$-schemes,
$T_1\to T_0$, the \emph{pullback datum} is 
$$
(n,T_1,Y_0\times_{T_0} T_1 \to T_1, Y_0\times_{T_0} T_1
\xrightarrow{g_0\times \text{Id}} C_0\times_{T_0} T_1,
(\pr{Y_0}^*\mcL_0, \dots, \pr{Y_0}^*\mcL_n)).
$$
A \emph{relative intersection sheaf} for $\pi$ is an assignment to every
intersection datum of a section $I_{g_0}(\mcL_0,\dots,\mcL_n)$ over
$T_0$ of $\Pic{}{C/T}$ that satisfies the following axioms.
\begin{enumerate}
\item[(0)]  The assignment is invariant under isomorphism of
  $(n+1)$-tuples. 
\item[(i)]  The assignment is $\mf{S}_{n+1}$-invariant.
\item[(ii)] The assignment is multiadditive in the invertible sheaves,
  i.e., 
$$
I_{g_0}(\mcL_0,\dots,\mcL_{n-1},\mcA \otimes_{\OO_{Y_0}}
  \mcB) \cong
I_{g_0}(\mcL_0,\dots,\mcL_{n-1},\mcA) 
\otimes_{\OO_{C_0}}
I_{g_0}(\mcL_0,\dots,\mcL_{n-1},\mcB).
$$
\item[(iii)] The assignment is compatible with pullback, i.e., the
  intersection sheaf of the pullback datum is the pullback to $T_1$ of
  the section of $\Pic{}{C/T}$ over $T_0$.
\item[(iv)] For $n\geq 1$,
for every $\OO_{Y_0}$-module homorphism  
$s_n:\mcL_n^{\vee} \to \OO_{Y_0}$ that is regular when restricted to
every fiber of $p_0$, for the associated Cartier divisor
$\iota_0:Z_0\hookrightarrow Y_0$, $I_{g_0\circ
  \iota_0}(\iota_0^*\mcL_0,\dots,\iota_{n-1}^*\mcL_{n-1})$ equals
$I_{g_0}(\mcL_0,\dots,\mcL_n)$.
\item[(v)] If $n$ equals $0$, then $I_{g_0}(\mcL_0)$ equals
  $\text{det}(Rg_*(\mcL_0))\otimes_{\OO_{C_0}}\text{det}(Rg_*\OO_{Y_0})^\vee$.  
\end{enumerate}
\end{defn}

\begin{rmk} \label{rmk-intshf} \marpar{rmk-intshf}
By (i), axioms (ii) and (iv) for the last argument imply the
analogous axiom for every argument.  By (ii) and (iii), 
it follows that if any $\mcL_i$ is isomorphic to a pullback from
$T_0$, 
then
$I_{g_0}(\mcL_0,\dots,\mcL_n)$ is the neutral section of $\Pic{}{C/T}$
over $T_0$.  In particular, if $Y_0$ is empty, this is the neutral
section.  By (iii) and fppf descent for morphisms, it suffices
to construct $I_{g_0}(\mcL_0,\dots,\mcL_n)$ satisfying the axioms
after fppf base change of $T_0$.  So, it suffices to construct
$I_{g_0}(\mcL_0,\dots,\mcL_n)$ when $p_0$ is projective.
Since every proper morphism $p_0$ of relative dimension $\leq 1$ is
projective fppf locally over $T_0$, it is convenient to state the
definition this way.
\end{rmk}

\begin{defn} \label{defn-gamma} \marpar{defn-gamma}
For every Noetherian
scheme $Y$, denote by $K(Y)$ the Grothendieck group of
bounded, perfect complexes of $\OO_Y$-modules with its usual ring
structure and lambda operations.  For every locally constant function
$r\in
\text{Hom}_{\text{cts}}(Y,\ZZ)$, 
(continuous for the Zariski topology and the discrete topology), 
define $r[\OO_Y]$ to be the corresponding element
in $K(Y)$. 
The \emph{rank}
of a perfect complex at a point is the alternating sum of the ranks of
the terms of
any bounded complex of locally free sheaves on a neighborhood that is
locally quasi-isomorphic to the perfect complex.  This depends only on
the perfect complex, and it
is locally constant in the point and additive for
exact triangles.  Hence it defines a group homomorphism
$\epsilon:K(Y) \to \text{Hom}_{cts}(Y,\ZZ)$.  
This is a ring homomorphism; denote the kernel by $\gamma^1$.
Together with the usual lambda operations this makes $K(Y)$ into an
augmented $\lambda$-ring (technically it splits into a tensor product
of the augmented $\lambda$-rings of the connected components of $Y$).
For every integer $n$, and for every ordered
$n$-tuple $(a_0,\dots,a_n)$ of elements $a_i\in K(Y)$ with ranks
$\epsilon_i$, denote by $\langle a_0,\dots,a_n \rangle$ the element
$(a_0-\epsilon_0[\OO_Y])\otimes \dots \otimes (a_n-\epsilon_n[\OO_Y])$.
\end{defn}

\begin{rmk} \label{rmk-gamma} \marpar{rmk-gamma}
The element $\langle a_0,\dots,a_n \rangle$ is in the gamma filtration
$\gamma^{n+1}$ on $K(Y)$.  
The gamma filtration is the filtration by ideals that is smallest
among those such that every element $\langle a_0,\dots,a_n\rangle$ is
in $\gamma^{n+1}$
and that is ``strictly'' compatible with
pullbacks to all projective space bundles (so that we can use the
``splitting principle'') in the following sense.  For every projective
bundle $f:\PP(E)\to Y$ of relative dimension $r$ and with relative
Serre twisting sheaf $\OO(1)$, an element $a\in K(Y)$ is in
$\gamma^{n+1}$ if and only if $([\OO(1)]-[\OO_{\PP(E)}])^r\otimes
f^*a$ is in $\gamma^{n+r+1}$, cf. \cite[Corollary 8.10]{ManinK}.
The operation $(a_0,\dots,a_n)\mapsto
\langle a_0,\dots,a_n\rangle$ is
multiadditive for \emph{sum} in $K(Y)$.  For invertible sheaves
$(\mcL_0,\dots,\mcL_n)$, the operation $\langle \mcL_0,\dots,\mcL_n
\rangle$ is not multiadditive for tensor product of
invertible sheaves.  However, it is multiadditive for tensor products
modulo the next piece of the 
gamma filtration, $\gamma^{n+2}$.  The intersection sheaf below 
extends to an additive map $\text{det}(Rg_*(-))$
defined on $\gamma^{n+1}$.  If that map
annihilates $\gamma^{n+2}$, then the ideal sheaf
$(\mcL_0,\dots,\mcL_n) \mapsto \text{det}(Rg_*(\langle
\mcL_0,\dots,\mcL_n \rangle))$ is multiadditive for tensor products.  
It seems
difficult to prove directly that any one of the constructions of the
intersection sheaf annihilates $\gamma^{n+2}$.  
\end{rmk}

\begin{hyp} \label{hyp-intshf} \marpar{hyp-intshf}
Let $T$ be a Noetherian scheme.
Let $p:Y\to T$, $\pi:C\to T$ be morphisms of algebraic
spaces.
\end{hyp}

\begin{lem} \label{lem-310} \marpar{lem-310}
If 
$\pi$ is smooth, and
if $p$ is locally fppf, then every $T$-morphism $g:Y\to C$  
is a \emph{perfect morphism} in
the sense 
of \cite[D\'efinition III.4.1]{SGA6}.  If $\pi$ is separated and $p$
is proper,
then also $g$ is proper.
\end{lem}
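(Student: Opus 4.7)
\mni
The plan is to factor $g$ as
\[
Y \xrightarrow{\Gamma_g} Y\times_T C \xrightarrow{\pr{C}} C,
\]
where $\Gamma_g=(\mathrm{id}_Y,g)$ is the graph morphism and $\pr{C}$, $\pr{Y}$ denote the two projections from $Y\times_T C$, and then to verify each factor separately. The projection $\pr{C}$ is the base change of $p:Y\to T$ along $\pi:C\to T$, while $\pr{Y}$ is the base change of $\pi$ along $p$; thus $\pr{C}$ is locally fppf, $\pr{Y}$ is smooth, and $\Gamma_g$ is a section of $\pr{Y}$.

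\mni
For perfection, I would first observe that every flat morphism that is locally of finite presentation has Tor-dimension $0$ and is pseudo-coherent, hence is perfect in the sense of \cite[D\'efinition III.4.1]{SGA6}; this gives perfection of $\pr{C}$. For $\Gamma_g$, I would invoke the standard fact that any section of a smooth morphism is a regular immersion: \'etale-locally on the target, a smooth morphism is modeled by $\AAA^n_Y\to Y$, and its sections are cut out by regular sequences of the expected length. Regular immersions admit finite Koszul resolutions, hence they are pseudo-coherent and of finite Tor-dimension, so they are perfect. Since composition of perfect morphisms is perfect, it follows that $g$ is perfect.

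\mni
For properness, I would use the cartesian square
\[
\xymatrix{Y \ar[r]^-{\Gamma_g} \ar[d]_-{g} & Y\times_T C \ar[d]^-{g\times\mathrm{id}_C} \\ C \ar[r]^-{\Delta_\pi} & C\times_T C,}
\]
which identifies $\Gamma_g$ as the pullback of the diagonal $\Delta_\pi$. When $\pi$ is separated, $\Delta_\pi$ is a closed immersion, so $\Gamma_g$ is too; when $p$ is proper, $\pr{C}$ is proper by base change. Therefore $g=\pr{C}\circ\Gamma_g$ is proper, as the composition of a closed immersion with a proper morphism.

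\mni
The only nonformal input is the fact that a section of a smooth morphism is a regular immersion in the sense of SGA6, which I view as the main technical point; everything else reduces to the stability of the classes of perfect and proper morphisms under composition and base change, both of which are standard.
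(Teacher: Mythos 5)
Your proposal is correct and follows essentially the same route as the paper: the same factorization $g=\pr{C}\circ\Gamma_g$, with $\Gamma_g$ a regular immersion (hence perfect, and a closed immersion when $\pi$ is separated) because it is a section of the smooth projection, and $\pr{C}$ perfect because it is locally fppf as a base change of $p$. The extra details you supply (the cartesian square against the diagonal, the \'etale-local model for smoothness) only make explicit what the paper leaves implicit.
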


\begin{proof}
Since $\pi$ is smooth, the immersion $\Gamma_g:Y\to
Y\times_T C$ is a regular immersion, hence an LCI morphism.
By
\cite[Proposition VIII.1.7]{SGA6}, $\Gamma_g$ is a perfect morphism.
If $\pi$ is separated, then $\Gamma_g$ is a closed
immersion, hence proper.
Since $p$ is locally fppf, also
$$
p\times\text{Id}_C:Y\times_T C \to C
$$
is locally fppf.  
Hence by \cite[Corollaire III.4.3.1]{SGA6}, also $p\times\text{Id}_C$ is
perfect.  
If $p$ is proper, then $p\times\text{Id}_C$ is
proper. 
Finally by \cite[Proposition III.4.5]{SGA6}, the composition
$(p\times\text{Id}_C)\circ \Gamma_g$ is perfect, i.e., $g$ is
perfect.  Also, if $p$ is proper and if $\pi$ is separated, then both
$p\times\text{Id}_C$ and $\Gamma_g$ are proper. Then the
composition $g$ is proper.
\end{proof}

\begin{hyp} \label{hyp-perfect} \marpar{hyp-perfect}
Assume that $\pi:C\to T$ is flat of constant relative dimension $d$.
Assume that $p:Y\to T$ is flat of constant relative dimension $d+n$
for an integer $n\geq 0$.  Assume that $g:C\to Y$ is a proper, perfect
$T$-morphism.  
\end{hyp}

\mni
By \cite[Proposition III.4.8]{SGA6}, for every perfect complex $E$ of
bounded amplitude on $Y$, also $Rg_*(E)$ is a perfect complex of
bounded amplitude on $C$.  Thus, by \cite{detdiv}, there is an
associated invertible sheaf $\text{det}(Rg_*(E))$ on $C$.  This is additive for
direct sum, and even for distinguished triangles of perfect complexes.  
By this
additivity property, $\text{det}(Rg_*(-))$ extends to an additive
homomorphism from the Grothendieck group of  
virtual classes of perfect complexes on $Y$ to the Picard group of $C$. 

\begin{defn} \label{defn-DP} \marpar{defn-DP}
Assuming Hypothesis \ref{hyp-perfect},
for every ordered $(n+1)$-tuple $(\mcL_0,\dots,\mcL_n)$ of invertible
sheaves on $Y$, the \emph{intersection sheaf} or 
\emph{Deligne pairing} relative to $g$, $I_g( \mcL_0, \dots, \mcL_n
)$, is the invertible sheaf on $C$ 
obtained by applying $\text{det}(Rg_*(-))$ to the virtual class
$\langle \mcL_0,\dots,\mcL_n\rangle :=
([\mcL_0]-[\OO_Y])\otimes \dots \otimes ([\mcL_n]-[\OO_Y]).$
\end{defn}

\mni
The virtual class $\langle \mcL_0,\dots,\mcL_n\rangle$ 
is basically the cup product of the K-theoretic first-Chern-classes (up
to twisting by the K-theory class of an invertible sheaf).  This is
an explicit alternating sum of K-theory 
classes of invertible $\OO_Y$-modules, and $\text{det}(Rg_*(-))$ of
this class is the alternating tensor product of the corresponding
terms.  
The
intersection sheaf  
is functorial for
the category of ordered $(n+1)$-tuples of invertible $\OO_Y$-modules
with isomorphisms as the morphisms.   
This is also $\mf{S}_{n+1}$-equivariant for permutation of the terms of
$(\mcL_0,\dots,\mcL_n)$.  If any $\mcL_i$ is isomorphic to $\OO_Y$,
then the intersection sheaf is isomorphic to $\OO_C$.  

%%%%%%%%%%%%%%%%%%%%%%%%%%%%%%%%%%%%%%%%%%%%%%%%%%%%%%%%%%%%%%%
%%
%% Section: Regular Sequences
%% 
%%%%%%%%%%%%%%%%%%%%%%%%%%%%%%%%%%%%%%%%%%%%%%%%%%%%%%%%%%%%%%%

\section{Regular Sequences} \label{sec-regseq}
\marpar{sec-regseq}

\mni
The intuition of intersection sheaves is stated most simply in terms of
regular sequences, and this is the basis of the construction for many
authors.  Whether or not it is the basis of any particular
construction, it is a key technique for proving properties of the
intersection sheaves.

\mni
Let $(s_i)_i$ be an $\OO_Y$-module homomorphism,
$$
(s_i)_{0\leq i\leq n}: \bigoplus_{i=0}^n \mcL_i^\vee \to \OO_Y.
$$

\begin{hyp} \label{hyp-Koszul} \marpar{hyp-Koszul}
Assume that $d\geq 1$, and assume that every fiber of $\pi$ satisfies
Serre's condition ${\sS}_2$ (if $d$ equals $1$, then conditions $\sS_1$ and
${\sS}_2$ are equivalent).
Assume that the restriction of $(s_i)_i$ to every fiber
of $p$ is a regular sequence.  
\end{hyp}

\begin{prop} \label{prop-Koszul} \marpar{prop-Koszul}
Under the above hypotheses, there exists 
$s=I_g(s_0,\dots,s_n)$, an $\OO_C$-module homomorphism $\OO_C\to
I_g(\mcL_0,\dots,\mcL_n)$ whose restriction to every fiber of $\pi$ is
injective.  The closed subset $g(\text{Supp}(\text{Coker}(s_i)_i))$
contains 
$\text{Supp}(\text{Coker}(s))$. 
Finally, for every virtual linear combination $\mc{H}$ of locally free
$\OO_Y$-modules  
of virtual rank $r\in \ZZ$, $\text{det}(Rg_*(\mc{H}\otimes \langle
\mcL_0,\dots,\mcL_n \rangle))$ is isomorphic to
$I_g(\mcL_0,\dots,\mcL_n)^{\otimes r}$.  
\end{prop}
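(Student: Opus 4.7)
The proof rests on the Koszul complex of $(s_i)_{i=0}^n$. Flatness of $p$ and fiberwise regularity imply the sequence is regular on $Y$, so
$$K_\bullet \;=\; \bigotimes_{i=0}^n \bigl(\mcL_i^\vee \xrightarrow{s_i} \OO_Y\bigr)$$
is a locally free resolution of $\OO_Z$ for $Z := V(s_0,\dots,s_n)$. Computing in $K(Y)$ gives $[\OO_Z] = \prod_i([\OO_Y]-[\mcL_i^\vee])$, and multiplying by $[\mcM]$ with $\mcM := \mcL_0\otimes\cdots\otimes\mcL_n$ yields the K-theoretic identity
$$\langle \mcL_0,\dots,\mcL_n\rangle \;=\; [\mcM]\cdot[\OO_Z] \;=\; [\mcM|_Z].$$
Applying $\text{det}(Rg_*(-))$ produces the fundamental identification $I_g(\mcL_0,\dots,\mcL_n) \cong \text{det}\bigl(R(g|_Z)_*(\mcM|_Z)\bigr)$, which converts the entire problem into one about the proper morphism $g|_Z:Z\to C$ and the line bundle $\mcM|_Z$.

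Since $Z\to T$ is flat of relative dimension $d-1<d$, the image $g(Z)$ has fiberwise codimension $\geq 1$ and $V := C\setminus g(Z)$ is dense in every fiber of $\pi$. The perfect complex $R(g|_Z)_*(\mcM|_Z)$ is acyclic on $V$, hence of locally constant virtual rank zero. A rank-zero perfect complex acyclic on an open carries a canonical trivialization of its determinant there, realized by a global section $s = I_g(s_0,\dots,s_n):\OO_C\to I_g(\mcL_0,\dots,\mcL_n)$ that is a unit on $V$; in particular $\text{Supp}(\text{Coker}(s))\subseteq g(Z) = g(\text{Supp}(\text{Coker}((s_i)_i)))$. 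On a fiber $C_t$, the section $s_t$ is a unit on the dense open $C_t\setminus g_t(Z_t)$, so nonzero at every generic point of the pure-dimensional fiber $C_t$; the $\sS_1/\sS_2$ hypothesis on $C_t$ forbids embedded associated primes of $\OO_{C_t}$, so $s_t$ is injective.

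For the $\mcH$-multiadditivity in (c), the claim is equivalent to $\text{det}(Rg_*((\mcH-r[\OO_Y])\otimes\langle\mcL_0,\dots,\mcL_n\rangle))\cong\OO_C$. I would apply the splitting principle: pass to an iterated flag bundle $\pi':F\to Y$ on which $\pi'^*\mcH$ splits as a $\ZZ$-linear combination of line bundles, and use the projection formula with $R\pi'_*\OO_F = \OO_Y$ to reduce to the case $\mcH = \mcN$ a single line bundle and $r = 1$. The Koszul identification then rewrites the claim as $\text{det}(R(g|_Z)_*((\mcN|_Z-\OO_Z)\otimes\mcM|_Z))\cong\OO_C$; this class is of virtual rank zero and lies in $\gamma^1(Z)$, and since $g|_Z:Z\to C$ has fiberwise relative dimension $-1$, proper pushforward shifts it into $\gamma^2(C)$, where the determinant homomorphism vanishes. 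The principal obstacle is justifying this $\gamma$-shift without regularity of $C$: I would handle it fiberwise by observing that both $\text{det}(R(g_t|_{Z_t})_*(\mcN|_{Z_t}\otimes\mcM|_{Z_t}))$ and $\text{det}(R(g_t|_{Z_t})_*(\mcM|_{Z_t}))$ carry canonical sections whose divisors at each codimension-one point of $C_t$ agree, each equaling the length of the finite fiber of $g_t|_{Z_t}$ over that point (unaffected by the line-bundle twist on a zero-dimensional scheme); the $\sS_2$ condition then extends the trivialization from $V\cap C_t$ across the codimension-one locus $g_t(Z_t)$ to yield the fiberwise isomorphism, and these glue to a global isomorphism by flatness of $\pi$.
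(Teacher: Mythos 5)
Your overall strategy is sound and your conclusions are the right ones, but you are in effect re-deriving the black box that the paper simply cites. The paper's proof is short: it checks that the Koszul complex $K(s_i)_i$ satisfies the transversality hypothesis $Q_{-1}$ relative to $g$ (using the $\sS_2$ hypothesis on the fibers of $\pi$ to see that every depth $\leq 1$ point of $C$ has codimension $\leq 1$ in its $\pi$-fiber, over which $Z=\text{Supp}(\text{Coker}(s_i)_i)$ is finite), and then invokes \cite[Proposition 9]{detdiv}, which delivers \emph{both} the regular Div section $s$ \emph{and} the isomorphism $\text{det}(Rg_*(\mcH\otimes K(s_i)_i))\cong \text{det}(Rg_*K(s_i)_i)^{\otimes r}$ for locally free $\mcH$ of rank $r$; additivity of $\text{det}(Rg_*(-))$ then handles virtual combinations. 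Your identification $\langle \mcL_0,\dots,\mcL_n\rangle=[\mcM|_Z]$ agrees with the paper's computation of the class of $K(s_i)_i$, and your length comparison at codimension-one points is exactly the computation underlying that citation. But one step of your first part needs repair: acyclicity of $R(g|_Z)_*(\mcM|_Z)$ on a fiberwise-dense open $V$ only produces a \emph{rational} trivialization of the determinant. To know it extends to a genuine homomorphism $\OO_C\to I_g(\mcL_0,\dots,\mcL_n)$ with no poles, you must use that $Z\cap g^{-1}(x)$ is finite over every depth $\leq 1$ point $x$ of $C$, so that near such $x$ the complex is the honest torsion sheaf $g_*(\mcM|_Z)$, whose determinant section is effective with order equal to its length at $x$. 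That finiteness is the content of $Q_{-1}$; density of $V$ in the fibers alone does not give it.

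The more serious problem is in your last part: the flag-bundle reduction destroys the very finiteness your length computation then relies on. After pulling back to $\pi':F\to Y$, the relevant closed subscheme is $Z\times_Y F$, whose fibers over codimension-one points of $C_t$ are positive-dimensional flag varieties, so the pushforwards of the line bundles $\mcN_i$ on $F$ are no longer of finite length at those points and the ``length of the finite fiber'' argument does not apply. Note also that the vanishing of $\text{det}(Rg_*(-))$ on $\gamma^2$ that you first invoke is precisely what Remark \ref{rmk-gamma} warns cannot be assumed. The fix is to skip the splitting principle entirely: for $\mcH$ locally free of rank $r$ on $Y$, its restriction to the semilocal scheme $Z\cap g^{-1}(x)$ is free of rank $r$, so $\text{length}((g_*(\mcH\otimes\mcM|_Z))_x)=r\cdot\text{length}((g_*(\mcM|_Z))_x)$ at every depth $\leq 1$ point $x$; this gives the isomorphism for a single locally free sheaf, and additivity of $\text{det}(Rg_*(-))$ extends it to virtual linear combinations. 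With these two repairs your argument becomes a correct, self-contained proof of the special case of \cite[Proposition 9]{detdiv} that the paper uses.
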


\begin{proof}
Denote by $K(s_i)_i$ the Koszul complex of $(s_i)_i$.  This is the
free differential graded $\OO_Y$-algebra on $(s_i)_i:\bigoplus_i
\mcL_i^\vee\to \OO_Y$.  It is a complex of locally free sheaves
concentrated in degrees $[-n-1,0]$.  In degree $-d$, the associated
locally free sheaf is
$$
K_d(s_i)_i = \bigwedge^d_{\OO_Y}\lt(\bigoplus_{i=0}^n \mcL_i^\vee \rt).
$$
Thus, the $K$-theory class of $K(s_i)_i$ is $\mcL\otimes \langle
\mcL_0,\dots,\mcL_n \rangle$, for the invertible sheaf 
$$
\mcL =
\mcL_0\otimes \dots \otimes \mcL_n.
$$
By the hypothesis, the only nonzero homology sheaf of this complex is
in degree $0$, and that homology sheaf is $\text{Coker}(s_i)_i$.  By
hypothesis, this is flat over $T$ of relative dimension $d-1$.  

\mni
Because the fibers of $\pi$ satisfy ${\sS}_2$, every point of $C$ of depth
$\leq 0$ is a generic point of its $\pi$-fiber, and every point of $C$
of depth $\leq 1$ is either a generic point or a codimension $1$ point
of its $\pi$-fiber.  By hypothesis, after restricting to the fiber
over each point $t$ of $T$, the support of $\text{Coker}(s_i)_i$ has
dimension $\leq d-1$.  Since the $\pi$-fiber $C_t$ has dimension $d$, the
fiber of $g$ over every codimension $0$ point of $C_t$ is disjoint
from the support of $\text{Coker}(s_i)_i$.  Similarly, the fiber of
$g$ over every codimension $1$ point of $C_t$ intersects the support
of $\text{Coker}(s_i)_i$ in a zero-dimensional scheme.
Altogether, $K(s_i)_i$ satisfies the transversality hypothesis
$Q_{-1}$ relative to $g:Y\to C$, \cite[Definition, p. 50]{detdiv}; in
fact it even satisfies the hypothesis after restricting to the fiber 
over every point of $T$.  

\mni
By
\cite[Proposition 9]{detdiv}, there is an associated Div section
$$
s:\OO_C\to \text{det}(Rg_* K(s_i)_i),
$$
and for every locally free sheaf $\mcH$ of rank $r$,
$\text{det}(Rg_*(\mc{H}\otimes K(s_i)_i))$ is canonically isomorphic
to 
$\text{det}(Rg_*
K(s_i)_i)^{\otimes r}$. (Please note: the statement of the proposition
is only for $r=1$, but the proof of the proposition works for all
$r\geq 1$.)  

\mni
In particular, for $\mcH$ equal to $\mcL$,
$\text{det}(Rg_* K(s_i)_i)$ is canonically isomorphic to
$I_g(\mcL_0,\dots,\mcL_n)$.  Using additivity of
$\text{det}(Rg_*(-))$, for every virtual linear combination $\mc{H}$
of locally free $\OO_Y$-modules, $\text{det}(Rg_*(\mc{H}\otimes
\langle \mcL_0,\dots,\mcL_n \rangle))$ is isomorphic to
$I_g(\mcL_0,\dots,\mcL_n)^{\otimes r}$.    
\end{proof}

\mni
Deligne's study of the functoriality of $I_g(s_0,\dots,s_n)$ in
$(s_0,\dots,s_n)$ is used to prove important properties of
$I_g(\mcL_0,\dots,\mcL_n)$ such as additivity.  
Let $\mc{A}$ and $\mc{B}$ be invertible sheaves on $Y$.  Let 
$$
\alpha:\mc{A}^\vee\to \OO_Y, \ \ \beta:\mc{B}^\vee \to \OO_Y
$$
be $\OO_Y$-module homomorphisms.  Then $\mc{A}\otimes_{\OO_Y} \mc{B}$
is an invertible sheaf on $Y$, and there is a tensor product
$\OO_Y$-module homomorphism,
$$
\alpha\otimes \beta:\mc{A}^\vee\otimes_{\OO_Y}\mc{B}^\vee \to \OO_Y.
$$
There is a useful compatibility between the Koszul complexes
$K(\alpha)$, $K(\beta)$, $K(\alpha\otimes \beta)$, and
$K(\alpha,\beta)$.  

\mni
There are natural morphisms of differential graded $\OO_Y$-algebras,
$K(\alpha\otimes \beta)\to K(\alpha)$ and $K(\alpha\otimes\beta)\to
K(\beta)$.  
There are morphisms of complexes of $\OO_Y$-modules, 
$$
L_\alpha: \mc{A}^\vee\otimes_{\OO_Y}K(\beta) \to K(\alpha\otimes
\beta), 
$$
$$
\begin{CD}
\mc{A}^\vee\otimes_{\OO_Y}\mc{B}^\vee @> \text{Id}\otimes \beta >>
\mc{A}^\vee \\ 
@V \text{Id} VV  @VV \alpha\otimes \text{Id} V \\
\mc{A}^\vee\otimes_{\OO_Y}\mc{B}^\vee @> \alpha\otimes \beta >> \OO_Y
\end{CD},
$$
$$
L_\beta: K(\alpha)\otimes_{\OO_Y} \mc{B}^\vee \to K(\alpha\otimes
\beta), 
$$
$$
\begin{CD}
\mc{A}^\vee\otimes_{\OO_Y}\mc{B}^\vee @> \alpha\otimes \text{Id} >>
\mc{B}^\vee \\ 
@V \text{Id} VV  @VV \text{Id} \otimes \beta V \\
\mc{A}^\vee\otimes_{\OO_Y}\mc{B}^\vee @> \alpha\otimes \beta >> \OO_Y
\end{CD}.
$$
Since $\mc{A}$ and $\mc{B}$ are invertible sheaves, these are even
homomorphisms of differential graded modules over $K(\alpha\otimes
\beta)$.  
The direct sum is a morphism of complexes of $\OO_Y$-modules,
$$
L_\alpha\oplus L_\beta:\lt( \mc{A}^\vee\otimes_{\OO_Y} K(\beta) \rt) \oplus
\lt( K(\alpha)\otimes \mc{B}^\vee \rt) \to K(\alpha\otimes \beta).
$$
Define $C(\alpha,\beta)$ to be the mapping cone of $L_\alpha\oplus L_\beta$,
$$
\begin{CD}
\lt(\mc{A}^\vee\otimes_{\OO_Y} \mc{B}^\vee\rt) \oplus \lt(\mc{A}^\vee
\otimes_{\OO_Y} \mc{B}^\vee \rt) 
@> d_{C,2} >>
\mc{A}^\vee \oplus \mc{B}^\vee \oplus \lt(\mc{A}^\vee \otimes_{\OO_Y}
\mc{B}^\vee\rt) 
@> d_{C,1} >>
\OO_Y,
\end{CD}
$$
$$
d_{C,1} = \lt[ \begin{array}{ccc}  
-\alpha & -\beta & \alpha\otimes \beta
\end{array} \rt], 
$$
$$
d_{C,2} = \lt[ \begin{array}{cc}
-\text{Id}\otimes \beta &                        0 \\
                      0 & -\alpha\otimes \text{Id} \\
             -\text{Id} &               -\text{Id}
\end{array} \rt].
$$
This is a differential graded $\OO_Y$-algebra.  
There is a mapping cone short exact sequence,
\marpar{eqn-first}
\begin{equation} 
0 \to K(\alpha\otimes \beta) \xrightarrow{u}
C(\alpha,\beta) \xrightarrow{v}
\lt(\mc{A}^\vee\otimes_{\OO_Y} K(\beta)\rt)\oplus
\lt(K(\alpha)\otimes_{\OO_Y} \mc{B}^\vee \rt)[1] \to 0.
\end{equation}
\label{eqn-first} 

\mni
The morphism $u$ is a homomorphism of differential graded
$\OO_Y$-algebras, and $v$ is a homomorphism of differential graded
$K(\alpha\otimes \beta)$-modules.

\mni
There is a natural morphism of differential graded $\OO_Y$-algebras,
$K(\alpha,\beta)\to K(\text{Id})$.  
The zero map, 
$$
0:\mcA^\vee\otimes_{\OO_Y}\mcB\otimes_{\OO_Y} K(\text{Id}) \to
K(\alpha,\beta),
$$
is a homomorphism of differential graded $K(\alpha,\beta)$-modules.
The mapping cone $C'(\alpha,\beta)$ is just
$\lt(\mcA^\vee\otimes_{\OO_Y}
\mcB^\vee\otimes_{\OO_Y}K(\text{Id})\rt)[1]\oplus K(\alpha,\beta),$
$$
\begin{CD}
\lt(\mc{A}^\vee\otimes_{\OO_Y} \mc{B}^\vee\rt) \oplus \lt(\mc{A}^\vee
\otimes_{\OO_Y} \mc{B}^\vee \rt) 
@> d_{C',2} >>
\lt(\mc{A}^\vee \otimes_{\OO_Y}
\mc{B}^\vee\rt) \oplus \mc{A}^\vee \oplus \mc{B}^\vee
@> d_{C',1} >>
\OO_Y,
\end{CD}
$$
$$
d_{C',1} = \lt[ \begin{array}{ccc}
0 & \alpha & \beta
\end{array} \rt],
$$
$$
d_{C',2} = \lt[ \begin{array}{cc}
 \text{Id} &                       0 \\
         0 & -\text{Id}\otimes \beta \\
         0 & \alpha\otimes \text{Id}
\end{array} \rt].
$$
\mni
This is a differential graded $\OO_Y$-algebra.
There is a mapping cone short exact sequence,
\marpar{eqn-second}
\begin{equation} 
0 \to K(\alpha,\beta) \xrightarrow{u'} C'(\alpha,\beta)
\xrightarrow{v'} \mc{A}^\vee\otimes_{\OO_Y}\mc{B}^\vee\otimes_{\OO_Y}
K(\text{Id}) \to 0.
\end{equation}
\label{eqn-second} 

\mni
The morphism $u'$ is a homomorphism of differential graded
$\OO_Y$-algebras, and $v'$ is a homomorphism of differential graded
$K(\alpha,\beta)$-modules.  

% There is a morphism of complexes of
% $\OO_Y$-modules, 
% $$
% M:\mc{A}^\vee \otimes_{\OO_Y} \mc{B}^\vee\otimes_{\OO_Y} K(\text{Id}) \to
% K(\alpha,\beta),
% $$
% $$
% \begin{CD}
% 0 @>>> \mc{A}^\vee\otimes_{\OO_Y}\mc{B}^\vee @>
% \text{Id}\otimes\text{Id} >> \mc{A}^\vee \otimes_{\OO_Y} \mc{B}^\vee
% \\
% @VVV @V (-\text{Id}\otimes \beta, \alpha\otimes \text{Id})^\dagger VV
% @VV 0 V \\
% \mc{A}^\vee\otimes_{\OO_Y} \mc{B}^\vee @> (-\text{Id}\otimes
% \beta,\alpha\otimes \text{Id})^\dagger >> \mc{A}^\vee \oplus
% \mc{B}^\vee @> (\alpha,\beta) >> \OO_Y
% \end{CD}.
% $$
% This is not a homomorphism of differential graded modules over
% $K(\alpha,\beta)$.  
% Define $C'(\alpha,\beta)$ to be the mapping cone of $M$,
% $$
% \begin{CD}
% \lt(\mc{A}^\vee\otimes_{\OO_Y} \mc{B}^\vee\rt) \oplus \lt(\mc{A}^\vee
% \otimes_{\OO_Y} \mc{B}^\vee \rt) 
% @> d_{C',2} >>
% \lt(\mc{A}^\vee \otimes_{\OO_Y}
% \mc{B}^\vee\rt) \oplus \mc{A}^\vee \oplus \mc{B}^\vee
% @> d_{C',1} >>
% \OO_Y,
% \end{CD}
% $$
% $$
% d_{C',1} = \lt[ \begin{array}{ccc}
% 0 & \alpha & \beta
% \end{array} \rt],
% $$
% $$
% d_{C',2} = \lt[ \begin{array}{cc}
%                \text{Id} &                       0 \\
%    \text{Id}\otimes\beta & -\text{Id}\otimes \beta \\
% -\alpha\otimes \text{Id} & \alpha\otimes \text{Id}
% \end{array} \rt].
% $$
% There is a mapping cone short exact sequence,
% \marpar{eqn-second}
% \begin{equation} 
% 0 \to K(\alpha,\beta) \xrightarrow{u_M} C'(\alpha,\beta)
% \xrightarrow{v_M} \mc{A}^\vee\otimes_{\OO_Y}\mc{B}^\vee\otimes_{\OO_Y}
% K(\text{Id}) \to 0.
% \end{equation}
% \label{eqn-second} 

\begin{lem} \label{lem-compat} \marpar{lem-compat}
The complexes of $\OO_Y$-modules, $C(\alpha,\beta)$ and
$C'(\alpha,\beta)$, are isomorphic.
\end{lem}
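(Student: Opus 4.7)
The plan is to write down explicit $\OO_Y$-module isomorphisms $\phi^j\colon C(\alpha,\beta)^j\to C'(\alpha,\beta)^j$ in each of the three nonzero degrees $j\in\{-2,-1,0\}$, and then to verify the two nontrivial commutation identities $d_{C',1}\circ\phi^{-1}=\phi^0\circ d_{C,1}$ and $d_{C',2}\circ\phi^{-2}=\phi^{-1}\circ d_{C,2}$. Both complexes are concentrated in degrees $[-2,0]$; the degree-$0$ terms are literally $\OO_Y$, the degree-$(-2)$ terms are both $(\mcA^\vee\otimes_{\OO_Y}\mcB^\vee)^{\oplus 2}$, and the degree-$(-1)$ terms differ only by the ordering of the three summands $\mcA^\vee$, $\mcB^\vee$, and $\mcA^\vee\otimes_{\OO_Y}\mcB^\vee$. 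The substantive discrepancy between the two differentials is that $d_{C,1}$ carries a nonzero entry $\alpha\otimes\beta$ in its $\mcA^\vee\otimes\mcB^\vee$-slot where $d_{C',1}$ has a zero, with a compensating discrepancy in degree $-2$. I therefore expect a plain permutation of summands to fail, but a permutation composed with a unipotent (triangular) correction to succeed.

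Concretely, I would take $\phi^0=\text{Id}_{\OO_Y}$, together with
$$
\phi^{-1}(a,b,c)=\lt(c,\ -a+(\text{Id}\otimes\beta)(c),\ -b\rt), \qquad \phi^{-2}(x,y)=(-x-y,\ y).
$$
Each is manifestly $\OO_Y$-linear, and both are invertible by immediate triangular back-substitution (solve for $c$, then $b$, then $a$ in $\phi^{-1}$; for $y$ then $x$ in $\phi^{-2}$). The first commutation reduces to the identity $\alpha\circ(\text{Id}\otimes\beta)=\alpha\otimes\beta$ on $\mcA^\vee\otimes_{\OO_Y}\mcB^\vee$: one computes $\alpha(-a+(\text{Id}\otimes\beta)(c))+\beta(-b)=-\alpha(a)+(\alpha\otimes\beta)(c)-\beta(b)$, which matches $d_{C,1}(a,b,c)$. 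The second commutation is a direct computation in which two contributions of $(\text{Id}\otimes\beta)(x)$ cancel, and both sides collapse to $(-x-y,\,-(\text{Id}\otimes\beta)(y),\,(\alpha\otimes\text{Id})(y))$.

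The main obstacle is guessing the unipotent correction $(\text{Id}\otimes\beta)(c)$ in the $\mcA^\vee$-component of $\phi^{-1}$; without it, $d_{C',1}\circ\phi^{-1}$ misses the $(\alpha\otimes\beta)(c)$ contribution that $d_{C,1}$ supplies. Once this ansatz is in place, compatibility with $d_{C,2}$ essentially forces the shape of $\phi^{-2}$ up to a choice of sign, and the verifications are one-line calculations. The resulting isomorphism is not symmetric in $(\alpha,\beta)$, but this is harmless because $C(\alpha,\beta)$ and $C'(\alpha,\beta)$ are each manifestly symmetric under the obvious swap, so a symmetric choice can be recovered a posteriori if ever needed.
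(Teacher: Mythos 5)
Your proof is correct and is essentially identical to the paper's: your $\phi^{-1}$ and $\phi^{-2}$ are exactly the matrices $\phi_1$ and $\phi_2$ that the paper writes down, and your verification of the two commutation identities (which the paper leaves as "straightforward to check") goes through as you computed. Nothing to change.
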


\begin{proof}
Consider the diagram
$$
\begin{CD}
\lt(\mc{A}^\vee\otimes_{\OO_Y} \mc{B}^\vee\rt) \oplus \lt(\mc{A}^\vee
\otimes_{\OO_Y} \mc{B}^\vee \rt) 
@> d_{C,2} >>
\mc{A}^\vee \oplus \mc{B}^\vee \oplus \lt(\mc{A}^\vee \otimes_{\OO_Y}
\mc{B}^\vee\rt) 
@> d_{C,1} >>
\OO_Y \\
@V \phi_2 VV @V \phi_1 VV @VV \text{Id} V \\
\lt(\mc{A}^\vee\otimes_{\OO_Y} \mc{B}^\vee\rt) \oplus \lt(\mc{A}^\vee
\otimes_{\OO_Y} \mc{B}^\vee \rt) 
@> d_{C',2} >>
\lt(\mc{A}^\vee \otimes_{\OO_Y}
\mc{B}^\vee\rt) \oplus \mc{A}^\vee \oplus \mc{B}^\vee
@> d_{C',1} >>
\OO_Y,
\end{CD}
$$
$$
\phi_1 = \lt[ \begin{array}{ccc}
         0 &           0 &              \text{Id} \\
-\text{Id} &           0 & \text{Id}\otimes \beta \\
         0 & - \text{Id} &                      0
\end{array} \rt],
$$
$$
\phi_2 = \lt[ \begin{array}{cc}
-\text{Id} & -\text{Id} \\
         0 &  \text{Id}
\end{array} \rt].
$$
It is straightforward to check that $\phi$ is a morphism of complexes.
Also, each of $\phi_1$ and $\phi_2$ is an isomorphism.  Thus, $\phi$
is an isomorphism of complexes of $\OO_Y$-modules.
\end{proof}

\mni
In what follows, all we need is existence of this isomorphism; we need
no other compatibilities.  That is fortunate:
the isomorphism above is not unique, and $\phi$ is not an isomorphism
of differential graded $\OO_Y$-algebras.  
Nonetheless, it is a lifting to perfect differential graded
$\OO_Y$-algebras of an elementary homomorphism of $\OO_Y$-algebras.
Since
$\mc{A}^\vee\otimes_{\OO_Y}\mc{B}^\vee\otimes_{\OO_Y}K(\text{Id})$ is
an acyclic complex, $u'$ is a quasi-isomorphism.  Thus, in the derived
category of complexes of $\OO_Y$-modules, there is a
distinguished triangle,
$$
\lt( \mc{A}^\vee \otimes_{\OO_Y} K(\beta)  \rt) \oplus 
\lt( K(\alpha) \otimes_{\OO_Y} \mc{B}^\vee \rt) 
\xrightarrow{-L_\alpha\oplus L_\beta}
K(\alpha\otimes \beta)
\xrightarrow{(u')^{-1}\circ\phi\circ u}
K(\alpha,\beta) 
$$
$$
\ \ \ \ \ \ \ \ \ \ \ \ \ \ \ \ \ \ \ \ \ \ \ \ \ {}
\xrightarrow{v\circ \phi^{-1} \circ u'}
\lt( \mc{A}^\vee \otimes_{\OO_Y} K(\beta)  \rt) \oplus 
\lt( K(\alpha) \otimes_{\OO_Y} \mc{B}^\vee \rt)[1] 
$$
When $(\alpha,\beta)$ is a regular sequence, this distinguished
triangle 
is a lifting to complexes of 
locally free sheaves of the
short exact sequence of $\OO_Y$-modules,
$$
0 \to 
\lt( \mc{A}^\vee \otimes_{\OO_Y}\lt( \OO_Y/\beta\rt) \rt) \oplus
\lt(\lt( \OO_Y/\alpha\rt) \otimes_{\OO_Y} \mc{B}^\vee \rt) \to
\OO_Y/(\alpha\otimes \beta) \to
\OO_Y/(\alpha \oplus \beta) \to 
0.
$$

\begin{hyp} \label{hyp-Koszul2} \marpar{hyp-Koszul2}
Assume that $d\geq 1$, and assume that every fiber of $\pi$ satisfies
Serre's condition ${\sS}_2$ (if $d$ equals $1$, then conditions $\sS_1$ and
${\sS}_2$ are equivalent).
Assume that the restriction of $(s_0,\dots,s_{n-1},\alpha,\beta)$ 
to every fiber
of $p$ is a regular sequence.  
\end{hyp}

\begin{prop} \label{prop-cones} \marpar{prop-cones}
Under the above hypotheses, there is an equality of effective
Cartier divisors, $I_g(s_0,\dots,s_{n-1},\alpha\otimes \beta) =
I_g(s_0,\dots,s_{n-1},\alpha)\otimes I_g(s_0,\dots,s_{n-1},\beta)$.
In particular, there is an isomorphism of intersection sheaves,
$I_g(\mcL_0,\dots, \mcL_{n-1},\mc{A}\otimes_{\OO_Y}\mc{B}) \cong
I_g(\mcL_0,\dots, \mcL_{n-1},\mc{A})
\otimes_{\OO_C}
I_g(\mcL_0,\dots, \mcL_{n-1},\mc{A}).$
\end{prop}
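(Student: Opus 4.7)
The plan is to apply $\det Rg_*(-)$ to the distinguished triangle displayed just before Lemma \ref{lem-compat}, after tensoring it with the auxiliary Koszul complex $K':=K(s_0,\dots,s_{n-1})$. Because $K'$ is a bounded complex of locally free sheaves, this tensoring preserves the triangle, and since the tensor product of Koszul complexes on disjoint sets of generators is the Koszul complex on the union, each term becomes a Koszul complex on $(s_0,\dots,s_{n-1})$ concatenated with $(\beta)$, $(\alpha)$, $(\alpha\otimes\beta)$, or $(\alpha,\beta)$, respectively. Applying $\det Rg_*(-)$ and invoking its additivity for distinguished triangles yields a canonical isomorphism of invertible sheaves on $C$ expressing $\det Rg_*\bigl(K(s_0,\dots,s_{n-1},\alpha\otimes\beta)\bigr)$ as the tensor product of the $\det Rg_*$ of the other three terms.

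Next, each of the four determinants is identified using Proposition \ref{prop-Koszul}. The middle term is canonically $I_g(\mcL_0,\dots,\mcL_{n-1},\mcA\otimes\mcB)$, endowed with the Div section $I_g(s_0,\dots,s_{n-1},\alpha\otimes\beta)$. The two summands in the other outer vertex are rank-one locally free twists of Koszul complexes, so the virtual-linear-combination clause of Proposition \ref{prop-Koszul} (applied with $\mcH=\mcA^\vee$ or $\mcH=\mcB^\vee$) identifies them with $I_g(\mcL_0,\dots,\mcL_{n-1},\mcB)$ carrying the Div section $I_g(s_0,\dots,s_{n-1},\beta)$ and with the analogous term for $\mcA$. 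The fourth factor $\det Rg_*\bigl(K(s_0,\dots,s_{n-1},\alpha,\beta)\bigr)$ corresponds to a regular sequence of length $n+2$ in fibers of $p$, which has relative dimension $d+n$; its cohomology is therefore fiberwise supported in dimension $d-2$, and under the proper map $g$ pushes forward to a subscheme of codimension $\geq 2$ in the relative dimension $d$ space $C$. In this codimension-$\geq 2$ regime the Div construction of \cite{detdiv} produces a canonical trivialization of this determinant, so this factor drops out.

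Putting these identifications together, the multiplicativity of Div sections under distinguished triangles (combined with Lemma \ref{lem-compat}, which produces the complex-level triangle via the mapping cones $C(\alpha,\beta)$ and $C'(\alpha,\beta)$) then yields the claimed equality of effective Cartier divisors on $C$, and the isomorphism of intersection sheaves follows by discarding Div sections. The hard part will be the codimension-$\geq 2$ argument for the fourth factor: one must verify, using the $\sS_2$ hypothesis on fibers of $\pi$ and a careful fiberwise dimension count, that the push-forward of the Koszul support indeed lies in codimension $\geq 2$ in $C$, and read the transversality hypothesis $Q_{-1}$ of \cite{detdiv} closely enough to confirm that Div produces a genuine nowhere-vanishing section rather than a nontrivial Cartier divisor in this regime. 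A secondary subtlety is to check that Lemma \ref{lem-compat}'s non-multiplicative isomorphism $\phi$ nonetheless intertwines the Div sections on $C(\alpha,\beta)$ and $C'(\alpha,\beta)$, so that the multiplicativity argument applies to the specific triangle above rather than only up to some uncontrolled automorphism.
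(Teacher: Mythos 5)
Your proposal is correct and follows essentially the same route as the paper's proof: tensor the mapping-cone sequences (equivalently, the distinguished triangle obtained from them via Lemma \ref{lem-compat}) with $K(s_0,\dots,s_{n-1})$, kill $\text{det}(Rg_*(K(s_0,\dots,s_{n-1},\alpha,\beta)))$ by the fiberwise codimension-$\geq 2$ argument (hypothesis $Q_{-2}$ plus Krull's Hauptidealsatz in the paper), and absorb the $\mcA^\vee$, $\mcB^\vee$ twists via the rank-one clause of Proposition \ref{prop-Koszul}. The two subtleties you flag at the end are exactly the ones the paper addresses, the second by noting that only the existence of the isomorphism $\phi$ of complexes is needed, since \cite[Theorem 3(ii)]{detdiv} is applied to each genuine short exact sequence of complexes separately.
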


\begin{proof}
Because the sequence $(s_0,\dots,s_{n-1},\alpha,\beta)$ is regular on
every fiber of $p$, so are the subsequences
$(s_0,\dots,s_{n-1},\alpha)$ and $(s_0,\dots,s_{n-1},\beta)$.  Also,
for every fiber of $p$, since both $\ol{\alpha}$ and $\ol{\beta}$ are
regular on the quotient by $(s_0,\dots,s_{n-1})$, also
$\ol{\alpha\otimes \beta}$ is regular on the quotient by
$(s_0,\dots,s_{n-1})$.  Thus, also the sequence
$(s_0,\dots,s_{n-1},\alpha\otimes \beta)$ is regular on every fiber of
$p$.  Thus all of the Cartier divisors above are defined.

\mni
Tensoring the Koszul complex $K(s_0,\dots,s_{n-1})$ with the short
exact sequence in Equation \ref{eqn-second} gives a mapping complex
short exact sequence,
$$
0 \to K(s_0,\dots,s_{n-1},\alpha,\beta) \xrightarrow{\text{Id}\otimes
  u'} 
K(s_0,\dots,s_{n-1})\otimes_{\OO_Y} C'(\alpha,\beta)
\xrightarrow{\text{Id}\otimes v'} 
$$
$$
\mc{A}^\vee\otimes_{\OO_Y}\mc{B}^\vee\otimes_{\OO_Y}
K(s_0,\dots,s_{n-1},\text{Id}) \to 0.
$$
The derived functor $Rg_*$ preserves mapping cones.  Thus there is
a mapping cone short exact sequence,
$$
0 \to Rg_* K(s_0,\dots,s_{n-1},\alpha,\beta) \xrightarrow{Rg_*(\text{Id}\otimes
  u')} 
Rg_*(K(s_0,\dots,s_{n-1})\otimes_{\OO_Y} C'(\alpha,\beta))
$$
$$
\xrightarrow{Rg_*(\text{Id}\otimes v')} 
Rg_*(\mc{A}^\vee\otimes_{\OO_Y}\mc{B}^\vee\otimes_{\OO_Y}
K(s_0,\dots,s_{n-1},\text{Id})) \to 0.
$$
The third complex is acyclic.
By hypothesis, $K(s_0,\dots,s_{n-1},\alpha,\beta)$
satisfies hypothesis $Q_{-2}$ relative to $g$, \cite[Definition,
p. 50]{detdiv}.  In other words, the open complement $U$ of
$g( \text{Supp} ( \text{Coker} ( s_0 , \dots , s_{n-1} , \alpha , \beta)))$
contains all depth $0$ and depth $1$ points of $C$, and
$Rg_*(K(s_0,\dots,s_{n-1}))$ is acyclic on $U$.  By \cite[Proposition
9]{detdiv}, there is a Div Cartier divisor of
$Rg_*(K(s_0,\dots,s_{n-1},\alpha,\beta))$, and this Cartier divisor is
acyclic on $U$.  By Krull's Hauptidealsatz, every minimal prime over a
principal ideal (assuming that there are any such primes) 
has height $0$ or $1$.  Thus, the Div Cartier divisor is trivial,
$\OO_C\xrightarrow{\text{Id}}\OO_C$.  Finally, by \cite[Theorem
3]{detdiv}, also $Rg_*$ of the middle complex is \emph{good}, and the
associated Div Cartier divisor is trivial.

\mni
Tensoring the Koszul complex $K(s_0,\dots,s_{n-1})$ with the
mapping cone short
exact sequence in Equation \ref{eqn-first} gives another mapping cone 
short exact
sequence,
$$
0 \to K(s_0,\dots,s_{n-1},\alpha\otimes \beta)
\xrightarrow{\text{Id}\otimes u_{L_\alpha\oplus L_\beta}}
K(s_0,\dots,s_{n-1})\otimes C(\alpha,\beta) 
\xrightarrow{\text{Id}\otimes v_{L_\alpha\oplus L_\beta}}
$$
$$
\lt(\mc{A}^\vee\otimes_{\OO_Y} K(s_0,\dots,s_{n-1},\beta)\rt)\oplus
\lt(K(s_0,\dots,s_{n-1},\alpha)\otimes_{\OO_Y} K(\alpha) \rt)[1] \to 0.
$$
The derived functor $Rg_*$ preserves mapping cones.  Thus there is
a mapping cone short exact sequence,
$$
0 \to Rg_*K(s_0,\dots,s_{n-1},\alpha\otimes \beta)
\xrightarrow{Rg_*(\text{Id}\otimes u_{L_\alpha\oplus L_\beta})}
Rg_*\lt(K(s_0,\dots,s_{n-1})\otimes C(\alpha,\beta)\rt) 
$$
$$
\xrightarrow{Rg_*(\text{Id}\otimes v_{L_\alpha\oplus L_\beta})}
Rg_*\lt(\lt(\mc{A}^\vee\otimes_{\OO_Y} K(s_0,\dots,s_{n-1},\beta)\rt)\oplus
\lt(K(s_0,\dots,s_{n-1},\alpha)\otimes_{\OO_Y} K(\alpha) \rt)\rt)[1] \to 0.
$$
The first and third complexes on $Y$ satisfy hypothesis $Q_{-1}$ relative to
$g$.  Thus, the first and third complexes on $C$ are \emph{good},
\cite[p. 47]{detdiv}.  By \cite[Theorem 3(ii)]{detdiv}, also the
middle complex on $C$ is good, and the ``sum'' of the Div Cartier
divisors of the first and third complex equals the Div Cartier divisor
of the middle complex.  By Lemma \ref{lem-compat}, the middle complex
above is isomorphic to the middle complex from the previous
paragraph.  That complex had trivial Div Cartier divisor.  Thus, the
middle complex above has trivial Div Cartier divisor.
Therefore, the Div Cartier
divisor of the first complex is the inverse Cartier divisor of the Div
Cartier divisor of the third complex.  Combined with Proposition
\ref{prop-Koszul}, this precisely gives 
$$
I_g(s_0,\dots,s_{n-1},\alpha\otimes \beta) =
I_g(s_0,\dots,s_{n-1},\beta)\otimes I_g(s_0,\dots,s_{n-1},\alpha).
$$
\end{proof}

\mni
This is the basic additivity of intersection sheaves under a
regularity hypothesis.  Via the
$\mf{S}_{n+1}$-equivariance and usual methods of multilinear algebra,
this implies other properties.  The following property is helpful in
proving additivity with no regularity hypothesis.

\mni  
For $i=0,\dots,n-1$, let $\mcL_i'$ and $\mcL_i''$ be
invertible $\OO_Y$-modules with $\OO_Y$-module homomorphisms,
$$
s'_i:(\mcL'_i)^\vee \to \OO_Y \ \ \ s''_i:(\mcL''_i)^\vee \to \OO_Y.
$$
Let $\mcA'$, $\mcA''$, $\mcB'$, and $\mcB''$ be invertible
$\OO_Y$-modules with $\OO_Y$-module homomorphisms,
$$
\alpha':(\mcA')^\vee \to \OO_Y \ \ \ \alpha'':(\mcA'')^\vee \to \OO_Y,
$$
$$
\beta':(\mcB')^\vee \to \OO_Y \ \ \ \beta'':(\mcA'')^\vee \to \OO_Y.
$$
Define $\mcL'_n = \mcA'\otimes_{\OO_Y} \mcB'$, resp. $\mcL''_n =
\mcA''\otimes_{\OO_Y} \mcB''$.  Define $s_n' = \alpha'\otimes \beta'$,
resp. $s_n''=\alpha''\otimes \beta''$.  

\mni
Denote $J=\{0,\dots,n-1,n\}$, a set with $n+1$ elements.
For every partition $J=J'\sqcup J''$, one of $J'$ or $J''$ contains
$n$.  Define $\mcL_{J',J''}$, resp. $\mcL_{J',J'',A}$,
$\mcL_{J',J'',B}$, 
to be the length-$(n+1)$ sequence of
invertible sheaves
$(\mcL_0,\dots,\mcL_{n-1},\mcL_n)$ where for $i=0,\dots,n-1$, 
$\mcL_i$ equals $\mcL'_i$ or $\mcL''_i$ depending on whether 
$i\in J'$ or $i\in J''$, and where
$\mcL_n$ equals $\mcL'_n$, resp. $\mcA'_n$, $\mcB'_n$, or
$\mcL''_n$, resp. $\mcA''_n$, $\mcB''_n$, depending on whether $n\in
J'$ or $n\in J''$.  Similarly, define $\mcL_{J',J'',A,B}$ to be the
length-$(n+1)$ sequence of invertible sheaves
$(\mcL_0,\dots,\mcL_{n-1},\mcA,\mcB)$ as above, where $\mcA$,
resp. $\mcB$, equals $\mcA'$ or $\mcA''$, resp. $\mcB'$ or $\mcB''$,
depending on whether $n\in J'$ or $n\in J''$.  For each of these
sequences, there is a corresponding sequence $s_{J',J''}$,
resp. $s_{J',J'',A}$, $s_{J',J'',B}$, $s_{J',J'',A,B}$ of
$\OO_Y$-module homomorphisms $s_j:\mcL_j^\vee\to \OO_Y$,
resp. $\alpha:\mcA^\vee \to \OO_Y$, $\beta:\mcB^\vee \to \OO_Y$, using
the $\OO_Y$-module homomorphisms from the previous paragraph. Finally,
let $(r_{J',J''})_{J',J''}$ be a sequence of integers $r_{J',J''}$
indexed by all partitions $(J',J'')$ of $J$.

\begin{hyp} \label{hyp-Koszul3} \marpar{hyp-Koszul3}
Assume that $d\geq 1$, and assume that every fiber of $\pi$ satisfies
Serre's condition ${\sS}_2$ (if $d$ equals $1$, then conditions $\sS_1$ and
${\sS}_2$ are equivalent).  Assume that for every partition $(J',J'')$ of
$J=\{0,\dots,n\}$, the restriction of the sequence $s_{J',J'',A,B}$ to
every fiber of $p$ is a regular sequence.
\end{hyp}

\begin{cor} \label{cor-cones} \marpar{cor-cones}
Under the above hypotheses,
there is an equality of Cartier divisors
(written additively),
$$
\sum_{J',J''} r_{J,J'}\text{Div}(I_g(s_{J',J''})) =
\sum_{J',J''}r_{J',J''}(\text{Div}(I_g(s_{J',J'',A})) + \text{Div}(I_g(s_{J',J'',B}))).
$$
In particular, there is an isomorphism of intersection sheaves,
$$
\bigotimes_{J',J''} I_g(\mcL_{J',J''})^{\otimes r_{J',J''}} \cong
\bigotimes_{J',J''} \lt( I_g(\mcL_{J',J'',A})\otimes_{\OO_C}
I_g(\mcL_{J',J'',B}) \rt)^{\otimes r_{J',J''}}.
$$
\end{cor}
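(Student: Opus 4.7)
The plan is to reduce the corollary to a partition-by-partition application of Proposition \ref{prop-cones} and then to sum the resulting identities with the weights $r_{J',J''}$. No new homological input is needed beyond what Proposition \ref{prop-cones} already provides; the content of the corollary is essentially a repackaging.

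Fix a partition $(J',J'')$ of $J = \{0,\dots,n\}$. By construction of the sequences in the paragraphs preceding Hypothesis \ref{hyp-Koszul3}, the three sequences $s_{J',J''}$, $s_{J',J'',A}$, $s_{J',J'',B}$ share the same first $n$ entries, determined by the partition via the homomorphisms $s'_i, s''_i$; they differ only in the $n$-th entry, which is $\alpha\otimes\beta$, $\alpha$, and $\beta$, respectively, where $\alpha\in\{\alpha',\alpha''\}$ and $\beta\in\{\beta',\beta''\}$ depending on whether $n\in J'$ or $n\in J''$. Hypothesis \ref{hyp-Koszul3} asserts that the length-$(n+2)$ sequence $s_{J',J'',A,B}$ obtained by replacing the last entry $\alpha\otimes\beta$ by the pair $(\alpha,\beta)$ is regular on every fiber of $p$. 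This is exactly Hypothesis \ref{hyp-Koszul2} for that partition, so Proposition \ref{prop-cones} applies and yields
$$
\text{Div}(I_g(s_{J',J''})) \;=\; \text{Div}(I_g(s_{J',J'',A})) + \text{Div}(I_g(s_{J',J'',B})).
$$

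Multiplying by the integer $r_{J',J''}$ and summing over all partitions $(J',J'')$ of $J$ gives the asserted identity of Cartier divisors. Passing from Cartier divisors to invertible sheaves via $D\mapsto \OO_C(D)$ turns integer sums into tensor powers and sums of divisors into tensor products, producing the stated isomorphism of intersection sheaves. The $\mf{S}_{n+1}$-equivariance in Definition \ref{defn-intshf}(i) is implicit in identifying ``the $n$-th entry'' with ``the last entry'' in each $s_{J',J''}$; since the sequences are labeled so that the tensor-split argument already sits in position $n$, no explicit permutation is required.

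The only substantive step is the verification that the combinatorial data produced in the paragraph defining $\mcL_{J',J'',A}$, $\mcL_{J',J'',B}$, and $\mcL_{J',J'',A,B}$ is precisely the data needed to feed Proposition \ref{prop-cones} for each partition; this is straightforward from the definitions. Consequently, there is no genuine obstacle: the corollary is a formal weighted superposition of Proposition \ref{prop-cones}, made possible by the fact that Hypothesis \ref{hyp-Koszul3} imposes the required regularity uniformly across all partitions at once.
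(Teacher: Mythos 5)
Your proposal is correct and follows exactly the paper's own argument: the paper likewise reduces to the single identity $\text{Div}(I_g(s_{J',J''})) = \text{Div}(I_g(s_{J',J'',A})) + \text{Div}(I_g(s_{J',J'',B}))$ for each partition, obtained from Proposition \ref{prop-cones} via Hypothesis \ref{hyp-Koszul3}, and then sums with the weights $r_{J',J''}$. Your additional verification that the data $s_{J',J'',A,B}$ satisfies Hypothesis \ref{hyp-Koszul2} for each partition is a useful explicitation of what the paper leaves implicit, but it is not a different route.
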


\begin{proof}
It suffices to prove for every partition $(J',J'')$ that the Cartier divisor
$I_g(s_{J',J''})$ equals $I_g(s_{J',J'',A})\otimes I_g(s_{J',J'',B})$,
using multiplicative notation.  This follows from Proposition
\ref{prop-cones}.  
\end{proof}

% \mni
% Assume that
% $\mcL_e\cong \mc{A}_e\otimes_{\OO_Y}\mc{B}_e$ for invertible sheaves
% $\mc{A}_e$, $\mc{B}_e$.  Also assume that $s_e$ equals
% $\alpha_e\otimes \beta_e$ for 
% $$
% \alpha_e:\mc{A}_e^\vee \to \OO_Y, \ \ 
% \beta_e: \mc{B}_e^\vee \to \OO_Y.
% $$

% \begin{cor}\label{cor-Koszul} \marpar{cor-Koszul}
% With hypotheses as in the previous proposition, assume that the
% restriction of $(s_0,\dots,s_{n-1},\alpha_n,\beta_n)$ to every fiber
% of $p$ is a regular sequence.  Then as effective Cartier divisors on $C$,
% $I_g(s_0,\dots,s_{n-1},\alpha_n\otimes \beta_n)$ equals
% $I_g(s_0,\dots,s_{n-1},\alpha_n)\otimes
% I_g(s_0,\dots,s_{n-1},\beta_n)$.  In particular,
% $I_g(\mcL_0,\dots,\mcL_{n-1},\mc{A}_n\otimes \mc{B}_n)$ is isomorphic
% to $I_g(\mcL_0,\dots,\mcL_{n-1},\mc{A}_n)\otimes
% I_g(\mcL_0,\dots,\mcL_{n-1},\mc{B}_n)$.  
% \end{cor}

% \begin{proof}

% \end{proof}

%%%%%%%%%%%%%%%%%%%%%%%%%%%%%%%%%%%%%%%%%%%%%%%%%%%%%%%%%%%%%%%
%%
%% Section: Properties
%% 
%%%%%%%%%%%%%%%%%%%%%%%%%%%%%%%%%%%%%%%%%%%%%%%%%%%%%%%%%%%%%%%

\section{Properties} \label{sec-properties}
\marpar{sec-properties}

\mni
There are a few straightforward properties of the intersection sheaf.
For every morphism
$a:C_0\to C$, 
denote by $Y_0$ the fiber product $Y\times_C C_0$, and
denote by $g\times\text{Id}_{C_0}$ the projection $Y\times_C C_0 \to
C_0$.  

\begin{lem} \label{lem-C0} \marpar{lem-C0}
Assuming Hypothesis \ref{hyp-perfect}, assuming that $a$ is fppf of
pure relative dimension $e$, then $g\times \text{Id}_{C_0}$ is proper
and perfect.
For every ordered $(n+1)$-tuple $(\mcL_0,\dots,\mcL_n)$ of invertible
$\OO_Y$-modules, there is an isomorphism of invertible sheaves on
$C_0$, 
$$
I_g^a( \mcL_0, \dots, \mcL_n ): 
a^*I_g( \mcL_0 , \dots , \mcL_n ) \to 
I_{g\times \text{Id}_{C_0}}
( \text{pr}_Y^*\mcL_0 , \dots , \text{pr}_Y^* \mcL_n ).
$$
This isomorphism is natural in $(\mcL_0,\dots,\mcL_n)$ and in $a$.  
\end{lem}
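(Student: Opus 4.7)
The plan is to reduce the entire statement to standard compatibilities of $Rg_*$ and $\det(-)$ with flat base change, applied to the explicit virtual class $\langle\mcL_0,\dots,\mcL_n\rangle$ defining the intersection sheaf.

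First I would dispense with the properness/perfectness claim for $g\times\text{Id}_{C_0}$. Properness is stable under arbitrary base change, and perfectness is stable under tor-independent base change by \cite[Corollaire III.4.3.1]{SGA6}; since $a$ is flat, the square defining $Y_0$ is automatically tor-independent, so $g\times\text{Id}_{C_0}$ is both proper and perfect. In particular, for any perfect complex $E$ on $Y$, the complex $\text{pr}_Y^*E$ is perfect on $Y_0$, and $R(g\times\text{Id}_{C_0})_*\text{pr}_Y^*E$ is perfect on $C_0$, so that the relevant determinant invertible sheaves are defined.

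Next I would assemble the isomorphism from three elementary compatibilities. (a) Pullback commutes with the formation of the virtual class: for invertible sheaves $\mcL_i$ on $Y$, one has $\text{pr}_Y^*([\mcL_i]-[\OO_Y]) = [\text{pr}_Y^*\mcL_i]-[\OO_{Y_0}]$ in $K(Y_0)$, hence $\text{pr}_Y^*\langle\mcL_0,\dots,\mcL_n\rangle = \langle\text{pr}_Y^*\mcL_0,\dots,\text{pr}_Y^*\mcL_n\rangle$. (b) Flat base change for $Rg_*$ applied to a perfect complex $E$: since $a$ is flat, the base change morphism $a^*Rg_*E \to R(g\times\text{Id}_{C_0})_*\text{pr}_Y^*E$ is an isomorphism of perfect complexes on $C_0$ (\cite[Proposition III.3.7]{SGA6} or the standard flat base change). (c) The determinant construction commutes with pullback by an arbitrary morphism: $a^*\det(F)\cong \det(La^*F)$ for a perfect complex $F$ on $C$, by the functorial axioms of $\det$ in \cite{detdiv}; when $a$ is flat, $La^* = a^*$.

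Chaining (a), (b), (c) and using additivity of $\det(Rg_*(-))$ on the virtual class produces the desired isomorphism
\[
a^*I_g(\mcL_0,\dots,\mcL_n)\; =\; a^*\det(Rg_*\langle\mcL_0,\dots,\mcL_n\rangle) \;\cong\; \det\bigl(R(g\times\text{Id}_{C_0})_*\langle\text{pr}_Y^*\mcL_0,\dots,\text{pr}_Y^*\mcL_n\rangle\bigr) \;=\; I_{g\times\text{Id}_{C_0}}(\text{pr}_Y^*\mcL_0,\dots,\text{pr}_Y^*\mcL_n).
\]
Naturality in $(\mcL_0,\dots,\mcL_n)$ and in $a$ follows because each of (a), (b), (c) is itself natural in these data.

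The main obstacle, such as it is, is purely bookkeeping: one must check that the flat base change isomorphism in (b) and the determinant-pullback isomorphism in (c) are compatible with distinguished triangles in a sufficiently coherent way that additivity of $\det(Rg_*(-))$ on the explicit alternating sum expressing $\langle\mcL_0,\dots,\mcL_n\rangle$ transports cleanly to $C_0$. Since $a$ is flat, all the base change maps are honest quasi-isomorphisms (no derived pullback corrections), so this compatibility is a direct consequence of the functoriality statements in \cite{detdiv} and \cite[III.3]{SGA6}; there is no further analytic content, and in particular Serre condition hypotheses play no role here.
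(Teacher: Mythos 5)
Your proposal is correct and follows essentially the same route as the paper: properness by base change, perfectness by flat (tor-independent) base change in SGA6, and then the observation that $\text{pr}_Y^*$ is a ring homomorphism on $K$-theory combined with the compatibility of $\det(Rg_*(-))$ with base change along $a$ from \cite{detdiv}. The only difference is that you unpack that last compatibility into separate flat-base-change and $\det$-pullback steps (and cite SGA6 III.4.3.1 where the paper uses III.4.7.1 for the base-change stability of perfectness), which is just a finer-grained presentation of the same argument.
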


\begin{proof}
The morphism $g\times\text{Id}_{C_0}$ is perfect by \cite[Corollaire
III.4.7.1]{SGA6}.  It is proper since it is a base change of a proper
morphism.  
Pullback under $\text{pr}_Y^*$ is a ring homomorphism from the
$K$-ring of virtual perfect complexes on $Y$ to the $K$-ring of
virtual perfect complexes on $Y\times_C C_0$.  Similarly,
$\text{det}(Rg_*(-))$ is compatible with $a$, \cite[p. 46]{detdiv}.
The lemma follows from these compatibilities.
\end{proof}

\mni
Similarly, for a morphism of Noetherian schemes $b:T_0\to
T$, denote $C_0=C\times_T T_0$, denote $Y_0=Y\times_T T_0$, and denote $g\times
\text{Id}_{T_0}:Y_0\to C_0$ the projection.  The same proof proves the
following.  

\begin{lem} \label{lem-T0} \marpar{lem-T0}
Assuming Hypothesis \ref{hyp-perfect},
the morphism $g\times\text{Id}_{T_0}$ is proper and perfect.
For every ordered $(n+1)$-tuple $(\mcL_0,\dots,\mcL_n)$ of invertible
$\OO_Y$-modules, there is an isomorphism of invertible sheaves on
$C_0$, 
$$
I_g^b( \mcL_0 , \dots , \mcL_n ): 
\text{pr}_C^* I_g( \mcL_0,\dots,\mcL_n ) \to 
I_{g\times \text{Id}_{T_0}}
( \text{pr}_Y^*\mcL_0,\dots, \text{pr}_Y^* \mcL_n ). 
$$
This isomorphism is natural in $(\mcL_0,\dots,\mcL_n)$ and in $b$.  
\end{lem}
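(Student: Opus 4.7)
The plan is to execute verbatim the proof of Lemma \ref{lem-C0}. The key observation is that there is a canonical identification $Y_0 = Y \times_T T_0 = Y \times_C C_0$ arising from $Y \times_T T_0 = Y \times_C (C \times_T T_0)$, so that $g \times \text{Id}_{T_0}$ is the base change of $g$ along the projection $\pr{C}:C_0 \to C$. This reduces the situation to pullback along a morphism of $C$, but with no flatness imposed on $\pr{C}$.

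First I would verify that $g \times \text{Id}_{T_0}$ is proper and perfect. Properness is automatic as a base change of the proper morphism $g$. Perfectness follows from \cite[Corollaire III.4.7.1]{SGA6}, which is valid for arbitrary base change; unlike in Lemma \ref{lem-C0}, no flatness on $b$ is needed, since base change of perfect morphisms preserves perfectness in full generality.

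Next I would construct the isomorphism by invoking exactly the two compatibilities used in Lemma \ref{lem-C0}. The pullback $\pr{Y}^*$ is a ring homomorphism from the Grothendieck $K$-ring of virtual perfect complexes on $Y$ to that on $Y_0$, so it sends $\langle \mcL_0,\dots,\mcL_n \rangle$ to $\langle \pr{Y}^*\mcL_0,\dots,\pr{Y}^*\mcL_n \rangle$. The determinant of derived pushforward is compatible with base change \cite[p. 46]{detdiv}, yielding a canonical isomorphism
$$
\pr{C}^* \text{det}(Rg_* \langle \mcL_0,\dots,\mcL_n \rangle) \xrightarrow{\sim} \text{det}(R(g \times \text{Id}_{T_0})_* \langle \pr{Y}^*\mcL_0,\dots,\pr{Y}^*\mcL_n \rangle).
$$
By Definition \ref{defn-DP}, this is the desired isomorphism $I_g^b(\mcL_0,\dots,\mcL_n)$.

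Naturality in $(\mcL_0,\dots,\mcL_n)$ follows from functoriality of $\pr{Y}^*$ and of the determinant under isomorphisms of perfect complexes, and naturality in $b$ is inherited from the compatibility of iterated base changes $T_1 \to T_0 \to T$, which is built into the base-change compatibility of both $\pr{Y}^*$ and $\text{det}(Rg_*(-))$. I do not expect any substantial obstacle: the only point where the argument genuinely diverges from Lemma \ref{lem-C0} is the need to invoke perfectness for a not-necessarily-flat base change, and this is handled by the cited SGA6 corollary.
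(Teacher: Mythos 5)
Your reduction is the right one, and it is in fact the route the paper takes: the paper proves Lemma \ref{lem-T0} by declaring that ``the same proof'' as Lemma \ref{lem-C0} applies, via the identification $Y\times_T T_0 = Y\times_C C_0$ for $C_0=C\times_T T_0$. The construction of the isomorphism from the two compatibilities (ring-homomorphism property of $\text{pr}_Y^*$ on $K$-groups, base-change compatibility of $\text{det}(Rg_*(-))$) and the naturality statements are fine. The problem is exactly at the point you single out as ``the only divergence.'' It is \emph{not} true that perfectness is preserved by arbitrary base change, and \cite[Corollaire III.4.7.1]{SGA6} is not a statement about arbitrary base change: the base-change results for perfect morphisms in SGA6 III.4.7 require the two factors to be Tor-independent over the base (flatness of the base-change morphism being the usual way to guarantee this, which is why Lemma \ref{lem-C0} assumes $a$ fppf). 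For a counterexample to your general claim: the closed immersion $\SP k \hookrightarrow \AAA^1_k$ at the origin is perfect (a regular immersion), but its base change along $\SP k[x]/(x^2) \to \AAA^1_k$ is $\SP k \to \SP k[x]/(x^2)$, which has infinite Tor-dimension and is not perfect. The same issue affects your second step: $Rg_*$, and hence $\text{det}(Rg_*(-))$, commutes with base change only under a Tor-independence or flatness hypothesis, not in full generality.

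The repair is available from Hypothesis \ref{hyp-perfect} and needs to be said explicitly. Since $\pi$ and $p$ are flat over $T$, one has $\OO_Y\otimes^{L}_{\OO_C}\OO_{C_0} \cong \OO_Y\otimes^{L}_{\OO_C}\OO_C\otimes^{L}_{\OO_T}\OO_{T_0} \cong \OO_Y\otimes^{L}_{\OO_T}\OO_{T_0} \cong \OO_Y\otimes_{\OO_T}\OO_{T_0} = \OO_{Y_0}$, concentrated in degree $0$; that is, $Y$ and $C_0$ are Tor-independent over $C$ even though $\text{pr}_C:C_0\to C$ need not be flat. With Tor-independence in hand, the SGA6 base-change result does give perfectness of $g\times\text{Id}_{T_0}$, and the Knudsen--Mumford base-change isomorphism for $\text{det}(Rg_*)$ applies to the classes $\langle\mcL_0,\dots,\mcL_n\rangle$, which are represented by $T$-flat bounded complexes of locally free sheaves on $Y$. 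With that one observation inserted where you currently assert preservation of perfectness ``in full generality,'' your argument is complete and coincides with the paper's.
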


\begin{hyp} \label{hyp-divisor} \marpar{hyp-divisor}
Let $p:Y\to T$ be an fppf morphism.
Let $(Z_i \hookrightarrow Y)_i$ be a nonempty finite set of 
closed subschemes that
are each $T$-flat
of constant relative dimension $d_i\geq 0$.  Let $\mcL$ be an invertible
sheaf on $Y$.  Let $\OO(1)$ be a $p$-relatively ample invertible sheaf
on $Y$.  
\end{hyp}

\begin{lem} \label{lem-divisor} \marpar{lem-divisor}
Under the above hypotheses,
there exists an integer $\wt{m}$ such that for every $m\geq \wt{m}$, 
after base change from $T$ to a
Zariski cover,
there exists a
homomorphism of coherent sheaves
$s:\OO(-m) \mcL$ such that $s$, resp. $s|_{Z_i}$, 
is injective after restriction to every fiber of $Y\to T$, 
resp. after restriction to every fiber of $Z_i\to T$.
Thus, the support of $\text{Coker}(s)$,
resp. $\text{Coker}(s|_{Z_i})$, is a $T$-flat Cartier divisor in $Y$,
resp. in $Z_i$. 
\end{lem}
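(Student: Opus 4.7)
The plan is to reduce the construction of $s$ to a fiberwise problem at a single point $t\in T$, solve it there using relative very ampleness, and then spread the chosen section to a Zariski neighborhood via local freeness of the relevant pushforwards and openness of the locus where a map of line bundles between $T$-flat sheaves has $T$-flat cokernel.

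First, I will choose $\wt{m}$ large enough that for every $m\geq \wt{m}$ the sheaf $\mcL\otimes \OO(m)$ is $p$-relatively very ample and, by Serre vanishing together with cohomology-and-base-change, both $p_*(\mcL\otimes \OO(m))$ and each $(p|_{Z_i})_*((\mcL\otimes \OO(m))|_{Z_i})$ are locally free on $T$ with formation commuting with arbitrary base change. The finiteness of the collection $(Z_i)_i$ is what lets a single $\wt m$ suffice for all these conditions simultaneously.

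Next, I fix a point $t\in T$. The fibers $Y_t$ and $Z_{i,t}$ each have only finitely many associated points; let $A_t\subset Y_t$ be their union. Very ampleness of $(\mcL\otimes \OO(m))|_{Y_t}$ makes the evaluation map $H^0(Y_t,(\mcL\otimes \OO(m))|_{Y_t}) \to \bigoplus_{y\in A_t}(\mcL\otimes \OO(m))(y)$ surjective, so I choose a fiber section $\ol{s}$ that is nonzero at every point of $A_t$. Such an $\ol s$ is a non-zero-divisor on $Y_t$ and its restriction to each $Z_{i,t}$ is a non-zero-divisor on $Z_{i,t}$. Local freeness and base change for $p_*(\mcL\otimes \OO(m))$ then let me lift $\ol s$ to a section $s$ of $\mcL\otimes \OO(m)$ over $p^{-1}(U)$ for some Zariski neighborhood $U$ of $t$, equivalently a homomorphism $s:\OO(-m)\to\mcL$ on $p^{-1}(U)$.

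Finally, the locus in $U$ where $s$ is fiberwise injective on $Y_u$ and on each $Z_{i,u}$ is open and contains $t$, by the fiberwise criterion for flatness applied to the line-bundle map $s$: injectivity on the fiber over $t$, together with $T$-flatness of $\OO_Y(-m)$ and $\mcL$, forces injectivity with $T$-flat cokernel in a neighborhood of $Y_t$ in $Y$, hence over an open neighborhood of $t$ in $T$. Shrinking $U$ and varying $t$ produces the Zariski cover; the $T$-flat Cartier divisor conclusion for $\text{Coker}(s)$ and each $\text{Coker}(s|_{Z_i})$ is then automatic, since $s$ is a section of an invertible sheaf that is a non-zero-divisor on every fiber. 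The main obstacle is the fiberwise simultaneous-avoidance at the chosen $t$; that is the only step that genuinely uses the finiteness of $(Z_i)_i$ in conjunction with relative very ampleness of $\mcL\otimes \OO(m)$.
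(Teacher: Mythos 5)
Your proposal is correct and follows essentially the same route as the paper: reduce to a single fiber of $p$ via semicontinuity/base change for the pushforward, use ampleness to produce a section of a large twist that is nonvanishing at the finitely many associated points of $Y_t$ and of the $Z_{i,t}$ (hence a non-zero-divisor, since the zero divisors form the union of the associated primes), and then spread out over a Zariski neighborhood using the fiberwise criterion for flatness. One small imprecision: associated points of $Y_t$ need not be closed (the generic points of components are always among them), so the evaluation map $H^0(Y_t,(\mcL\otimes\OO(m))|_{Y_t})\to\bigoplus_{y\in A_t}(\mcL\otimes\OO(m))(y)$ is generally \emph{not} surjective (the target can be infinite-dimensional over the base field); what you actually need, and what does hold for $m\gg 0$, is merely the existence of a section whose image in each residue field $\kappa(y)$ is nonzero, i.e., a section avoiding each of the finitely many associated primes, which is the statement the paper invokes.
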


\begin{proof}
By semicontinuity, there exists $\wt{m}$ such that 
for all $m\geq \wt{m}$, $p_*\mcL(m)$ surjects onto the sections of
$\mcL(m)$ on each fiber of $p$.  Thus,
every homomorphism $s$ defined on a
fiber of $p$ extends to a Zariski neighborhood of that point.
For a homomorphism $s$, resp. $s|_{Z_i}$, if the restriction to a
fiber of $p$ is 
injective, then $s$, resp. $s|_{Z_i}$, 
is injective with flat cokernel on a Zariski open
neighborhood of that fiber, 
\cite[Theorem 23.7]{Ma}.
Thus, to construct a Zariski neighborhood of a fiber of $p$
and $s$ as above, 
it suffices to prove the result
for the fiber.  Thus, assume that $T$ is $\SP k$ for a field $k$. 

\mni
By primary decomposition, there are finitely many associated points of
$Y$, resp. of the finitely many closed subschemes $Z_i$.  
By ampleness, up to increasing $\wt{m}$,
for every $m\geq \wt{m}$,
there exists $s:\OO(-m)\to \mcL$ that is an isomorphism
on the stalks at each of the finitely many associated points.  Since
the set of zero divisors in a Noetherian ring is precisely the union
of the associated primes, $s$ is injective, resp. each $s|_{Z_i}$ is injective.
\end{proof}

\begin{hyp} \label{hyp-divisor2} \marpar{hyp-divisor2}
Let $p:Y\to T$ be an fppf morphism.
Let $(Z_i \hookrightarrow Y)_i$ be a nonempty finite set of 
closed subschemes that
are each $T$-flat
of constant relative dimension $d_i\geq 0$.  
Let $\OO(1)$ be a $p$-relatively ample invertible sheaf
on $Y$.  
Let $e\geq 1$ be an integer, and let $J$ be $\{0,\dots,e\}$.
Let $(\mcL_j)_{j\in J}$ be a finite collection of
$e$ invertible sheaves on $Y$.
\end{hyp}

\begin{lem} \label{lem-divisor2} \marpar{lem-divisor2}
Under the above hypotheses,
for every integer $\wt{m}$, 
after base change from $T$ to a Zariski cover of $T$, there exists
a sequence $(m_j)_{j\in J}$ of integers $m_j\geq \wt{m}$ and there exists
a collection $(s_j)_{j\in J}$ of homomorphisms of coherent sheaves
$s_j:\OO(-m_j) \to \mcL_j$ such that for every $i$ and for every finite
subset $J'\subset J$ of size $e_i\leq d_i+1$, the sequence
$(s_j|_{Z_i})_{j\in J'}$ is regular on every fiber of $Z_i\to T$. 
Thus the closed
subscheme $Z_{i,J'}$
of $Z_i$ cut out by this regular sequence 
is $T$-flat of constant relative dimension $d_i-e_i$; $Z_i$ is empty
when $e_i$ equals $d_i+1$.  
Moreover, the
set
$\mf{M}\subset \ZZ_{\geq 0}^{n+1}$ of sequences $(m_j)_{j\in J}$ for
which there exists such a datum has the following property: for every
$(m_0,\dots,m_n) \in \mf{M}$, 
for every $r=1,\dots,n$, there exists an
integer $\wt{m}_{r}$ such that for every $m\geq \wt{m}_{r}$, there exists
$(m'_{r+1},\dots,m'_n)\in \ZZ_{\geq 0}^{n-r}$ with
$(m_0,\dots,m_{r-1},m,m'_{r+1},\dots,m'_n)$ in $\mf{M}$.
\end{lem}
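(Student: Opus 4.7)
The plan is to construct the sections $(s_j)_{j\in J}$ inductively in $j$, in exact parallel with Lemma \ref{lem-divisor}. The inductive hypothesis at stage $j$ is that choices $(m_0,s_0),\dots,(m_{j-1},s_{j-1})$ have already been fixed such that, for every $i$ and every subset $J''\subset\{0,\dots,j-1\}$ with $|J''|\leq d_i$, the sequence $(s_k|_{Z_i})_{k\in J''}$ is regular on every fiber of $Z_i\to T$; iterating \cite[Theorem 23.7]{Ma} as in the proof of Lemma \ref{lem-divisor} then shows that the $Z_{i,J''}$ are $T$-flat of relative dimension $d_i-|J''|$. To advance the induction it suffices to produce $s_j\colon\OO(-m_j)\to\mcL_j$ whose restriction to every $Z_{i,J''}$ with $|J''|\leq d_i$ is injective on every fiber, since the regularity for subsets $J'\subset\{0,\dots,j\}$ not containing $j$ is inherited from the previous stage.

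First I would reduce to the case $T=\SP k$ for a field $k$ exactly as in Lemma \ref{lem-divisor}: semicontinuity of $p_*\mcL_j(m)$ together with preservation of fiberwise injectivity in a Zariski neighborhood (\cite[Theorem 23.7]{Ma}) allow me to build $s_j$ over a single fiber and then extend. Over the field, each $Z_{i,J''}$ has finitely many associated points, so the union $\Sigma_j$ over all eligible pairs $(i,J'')$ is a finite subset of $Y$, and $s_j$ must be nonvanishing at every point of $\Sigma_j$.

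Next, by $p$-ampleness of $\OO(1)$ I can enlarge $\wt{m}_j$ so that for every $m_j\geq\wt{m}_j$ the twist $\mcL_j(m_j)$ is globally generated and, via Serre vanishing applied to the ideal sheaf of $\Sigma_j$, the evaluation $H^0(Y,\mcL_j(m_j))\to\bigoplus_{P\in\Sigma_j}\mcL_j(m_j)|_P$ is surjective. Choosing any element of $\prod_{P\in\Sigma_j}\kappa(P)^*$ in the image yields a homomorphism $s_j\colon\OO(-m_j)\to\mcL_j$ not vanishing at any point of $\Sigma_j$. By construction $s_j|_{Z_{i,J''}}$ is then a nonzerodivisor on every fiber of $Z_{i,J''}$, so $(s_k|_{Z_i})_{k\in J''\cup\{j\}}$ is regular on every fiber of $Z_i\to T$; the $T$-flatness of each $Z_{i,J''\cup\{j\}}$ of relative dimension $d_i-|J''|-1$ (empty when $|J''|=d_i$) follows from one further application of \cite[Theorem 23.7]{Ma}, completing the inductive step.

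For the ``moreover'' clause, observe that $\wt{m}_j$ depends only on $(s_0,\dots,s_{j-1})$ and on the fixed data $(Y,\{Z_i\},\OO(1),(\mcL_j))$, and not on the later $(m_j,\dots,m_n)$. Hence given any $(m_0,\dots,m_n)\in\mf{M}$ realized by sections $(s_0,\dots,s_n)$, the initial segment $(s_0,\dots,s_{r-1})$ determines a threshold $\wt{m}_r$ such that for every $m\geq\wt{m}_r$ one may choose a fresh $s_r$ of twist $m$ by the procedure above and then continue the induction for positions $r+1,\dots,n$ to produce $(m'_{r+1},\dots,m'_n)$ with $(m_0,\dots,m_{r-1},m,m'_{r+1},\dots,m'_n)\in\mf{M}$. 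The main obstacle I expect is purely organizational: one must simultaneously avoid all associated points of all the $Z_{i,J''}$ at once and verify flatness of every $Z_{i,J''\cup\{j\}}$; since the number of pairs $(i,J'')$ is finite and $p$-ampleness produces arbitrarily many sections avoiding any prescribed finite subset of $Y$, no genuine new difficulty arises beyond what is already handled in Lemma \ref{lem-divisor}.
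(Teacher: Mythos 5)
Your proposal is correct and follows essentially the same route as the paper: an induction that adds one section at a time, where the inductive step applies Lemma \ref{lem-divisor} (equivalently, the associated-points-plus-ampleness argument that proves it) to the finite collection of $T$-flat partial intersection schemes $Z_{i,J''}$ cut out by the previously chosen sections, and the ``moreover'' clause about $\mf{M}$ falls out because the threshold $\wt{m}_r$ depends only on the earlier sections. The only cosmetic difference is that you unwind the proof of Lemma \ref{lem-divisor} (Serre vanishing and evaluation at associated points) instead of citing it as a black box.
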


\begin{proof}
This is proved by induction on $e$.  When $e$ equals
$0$, 
this follows from Lemma \ref{lem-divisor}.  Thus, by way of
induction, assume that $e>0$, and assume the result
is proved for smaller values of $e$.  Partition $J$ as $\{e\}\sqcup
J_{e}$.  By the induction
hypothesis, after base change to a Zariski cover of $T$, 
there exists 
$(m_j)_{j\in J_{e}}$ and there
exists $(s_j)_{j\in J_{e}}$ such that for
every $i$ and for every subset $J'\subset J_{e}$ of size $e_i\leq
d_i$, the sequence $(s_j|_{Z_i})_{j\in J'}$ is regular on all fibers
of $Z_i\to T$.  Thus the closed subscheme $Z_{i,J'}$ of $Z_i$ cut out
by this regular sequence is $T$-flat of relative dimension $d_i-e_i$.

\mni
For every $(m_j)_{j\in J_{e}}$ and $(s_j)_{j\in J_{e}}$ as above,
consider the collection
$(Z_{i,J'})_{(i,J')}$ of $T$-flat closed subschemes
of $Y$ where for every $i$, $J'$ varies over 
all subsets
$J'\subset J_{e-1}$ of size $e_i \leq d_i$.  When $J'$ is the empty set,
interpret $Z_{i,\emptyset}$ as $Z_i$.

\mni
By Lemma \ref{lem-divisor},
there exists an integer $\wt{m}_{e} \geq \wt{m}$ such that for every integer
$m_e\geq \wt{m}_{e}$, after replacing $T$ by a Zariski cover once more, there
exists $s_e:\OO(-m_e)\to \mcL_e$ that is regular after restriction to
every fiber of $Z_{i,J'}\to T$.  Thus, since the
restriction to each fiber of $Z_i\to T$ of $(s_j)_{j\in J'}$ is
regular, and also $s_e$ is regular on each fiber of $Z_{i,J'}\to T$,
the entire sequence $(s_j)_{j\in J'}\cup (s_e)$ is regular on each
fiber of $Z_i\to T$.  The result follows by induction on $e$.

\mni
Moreover, since for every $(m_0,\dots,m_{e-1})$ as above, for every
$m\geq \wt{m}_e$, the element $(m_0,\dots,m_{e-1},m)$ is in $\mf{M}$,
also the observation about $\mf{M}$ follows by induction on $e$.  
\end{proof}
  
\mni
A first application of this is in the special case that every $\mcL_j$
is $\OO_Y$. 

\begin{lem} \label{lem-divisor3} \marpar{lem-divisor3}
In Lemma \ref{lem-divisor2}, if every $\mcL_j$ is $\OO_Y$, then for
every integer $m_0$, after base change from $T$ to a Zariski cover of
$T$, there exists $(m_j)_{j\in J}$ and $(s_j)_{j\in J}$ as in the
lemma and satisfying the extra hypothesis that $m_1=\dots=m_e$.
\end{lem}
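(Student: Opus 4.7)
The plan is to apply Lemma \ref{lem-divisor2} directly to produce sections satisfying the required regular-sequence property, and then to replace the sections indexed $j = 1, \dots, e$ by suitable tensor powers in order to equalize their degrees. The key input is the classical fact that replacing each element of a regular sequence by a positive power yields another regular sequence, which lets us equalize degrees without disturbing the regularity hypothesis.

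First, I would apply Lemma \ref{lem-divisor2} with the integer $\wt{m}$ taken to be $m_0$. After a Zariski base change of $T$, this produces a sequence of integers $(m_j')_{j \in J}$ with each $m_j' \geq m_0$ together with $\OO_Y$-module homomorphisms $s_j' : \OO(-m_j') \to \OO_Y$ such that for every $i$ and every subset $J' \subset J$ of size $e_i \leq d_i + 1$, the restricted sequence $(s_j'|_{Z_i})_{j \in J'}$ is regular on every fiber of $Z_i \to T$.

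Next, set $M := \mathrm{lcm}(m_1', \dots, m_e')$ and $k_j := M/m_j'$ for $j = 1, \dots, e$. Define $s_0 := s_0'$ and $s_j := (s_j')^{\otimes k_j}$ for $j \geq 1$. Each $s_j$ with $j \geq 1$ is then an $\OO_Y$-module homomorphism $\OO(-M) \to \OO_Y$, so the tuple of degrees becomes $(m_0', M, M, \dots, M)$ and the extra hypothesis $m_1 = \dots = m_e = M$ holds by construction.

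It remains to verify the regular-sequence property for $(s_j)_{j \in J}$. Fix $i$ and a subset $J' \subset J$ of size $e_i \leq d_i + 1$. On each fiber of $Z_i \to T$, the sequence $(s_j'|_{Z_i})_{j \in J'}$ is regular. I would then invoke the elementary fact that if $(f_1, \dots, f_r)$ is a regular sequence in a commutative ring $A$, then so is $(f_1^{a_1}, \dots, f_r^{a_r})$ for any positive integers $a_i$; this is proved by induction on $r$, using that a positive power of a non-zerodivisor is a non-zerodivisor and passing to successive quotients. Applying this fiberwise with $a_0 = 1$ and $a_j = k_j$ for $j \in J' \setminus \{0\}$ shows that $(s_j|_{Z_i})_{j \in J'}$ is regular on every fiber, so the hypothesis of Lemma \ref{lem-divisor2} holds for the constructed data. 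The only (minor) obstacle is this bookkeeping of regularity under taking powers in the fiberwise setting; once it is in hand, the result is a direct corollary of Lemma \ref{lem-divisor2}.
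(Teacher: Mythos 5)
Your proposal is correct and follows essentially the same route as the paper: apply Lemma \ref{lem-divisor2}, pass to the least common multiple of the resulting integers, replace each section by the corresponding tensor power, and invoke the classical fact that positive powers of a regular sequence again form a regular sequence (the paper cites \cite[Theorem 16.1]{Ma} for this rather than sketching an induction, and it equalizes all the $m_j$ including $m_0$, but these are cosmetic differences).
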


\begin{proof}
By Lemma \ref{lem-divisor2}, there exists some sequence of integers
$(n_j)$, not necessarily all equal, and there exists a sequence
$(t_j)_{j\in J}$ of homomorphisms $t_j:\OO(-n_j)\to \OO_Y$ as in the
lemma.  Now let $m$ be the least common multiple of all $n_j$, i.e.,
for each $j$ there exists a positive integer $r_j$ such that
$m=n_jr_j$.  Set $s_j$ equal to $t_j^{r_j}$.  Then every $s_j$ is a
homomorphism $\OO(-m)\to \OO_Y$.  Since $(t_j)_{j\in J'}$ is regular, 
$(s_j)_{j\in J'}$ is also regular, \cite[Theorem 16.1]{Ma}. 
\end{proof}

%%%%%%%%%%%%%%%%%%%%%%%%%%%%%%%%%%%%%%%%%%%%%%%%%%%%%%%%%%%%%%%
%%
%% Section: The Main Result
%% 
%%%%%%%%%%%%%%%%%%%%%%%%%%%%%%%%%%%%%%%%%%%%%%%%%%%%%%%%%%%%%%%

\section{The Main Result} \label{sec-main}
\marpar{sec-main}

\begin{defn} \label{defn-SF} \marpar{defn-SF}
A \emph{Stein factorization} of $\pi$ is a pair of
finitely presented morphisms,
$$
C\xrightarrow{\pi'} T' \xrightarrow{\rho} T,
$$
such that $\rho\circ \pi'$ equals $\pi$, such that $\rho$ is
quasi-finite, and such that the natural homomorphism of $\OO_{T'}$-algebras,
$$
\OO_{T'}\to \pi'_*\OO_C,
$$ 
is an isomorphism.  
\end{defn}

\mni
If $\pi$ is proper, then there exists a Stein factorization of $\pi$.   
Assuming that a Stein factorization
exists, 
for every invertible sheaf $\mcL$
on $T'$, 
the natural homomorphism of $\OO_{T'}$-modules, 
$$
\mcL\to \pi'_*(\pi')^*\mcL,
$$
is an isomorphism.  Moreover, Stein factorizations are compatible with
fppf base change (they are compatible with arbitrary base change if
$\pi$ is proper and cohomologically flat in degree $0$).

\begin{hyp} \label{hyp-intersect} \marpar{hyp-intersect}
Assume that $\pi:C\to T$ is flat of constant relative dimension $d\geq
1$.  Assume that every fiber of $p$ satisfies Serre's condition ${\sS}_2$
(if $d$ equals $1$, then conditions $\sS_1$ and
${\sS}_2$ are equivalent).
Assume that $p:Y\to T$ is flat of constant relative dimension $d+n$
for an integer $n\geq 0$.  Assume that $g:C\to Y$ is a proper, perfect
$T$-morphism.  
\end{hyp}

\begin{prop} \label{prop-intersect} \marpar{prop-intersect}
As above, let $T$ be a Noetherian scheme. 
Let $\pi:C\to T$ be a flat morphism of constant relative dimension
$d\geq 1$, and assume that all fibers are ${\sS}_2$.  Let $p:Y\to T$ 
be a finite type,
flat morphism of constant relative dimension $d+n$ for $n\geq 0$.  Let
$g:Y\to C$ be a proper, perfect $T$-morphism.  Assume that there exists an
invertible sheaf $\OO(1)$ on $Y$ that is $p$-ample.  All of the
following hold after base change of $T$ by a Zariski cover, setting
$T'=T$; resp. if there exists a Stein factorization of $\pi$, the
following hold without base change for $T'$ as in the Stein factorization.
\begin{enumerate}
\item[(i)] 
For every $(n+2)$-tuple of invertible sheaves,
  $(\mcL_0,\dots,\mcL_{n-1},\mcA,\mcB)$, there exists an
  invertible sheaf $I_{g,\pi}(\mcL_0,\dots,\mcL_{n-1},\mcA,\mcB)$
  on $T'$ and an isomorphism of $\OO_C$-modules,
  $$
I_g(\mcL_0,\dots,\mcL_{n-1},\mcA)\otimes
  I_g(\mcL_0,\dots,\mcL_{n-1},\mcB) \cong
$$
$$
  I_g(\mcL_0,\dots,\mcL_{n-1},\mcA \otimes
  \mcB)\otimes_{\OO_{T'}}I_{g,\pi}(\mcL_0,\dots,\mcL_{n-1},\mcA,\mcB).
$$
\item[(ii)]
For every virtual perfect complex $\mc{H}$ on $Y$ of virtual rank $r$,
there exists an invertible sheaf $I_g(\mcL_0,\dots,\mcL_n,\mcH)$ on
$T'$ and an isomorphism of $\OO_C$-modules,
$$
\text{det}(Rg_*(\mc{H}\otimes \langle \mcL_0,\dots,\mcL_n \rangle))
\cong I_g(\mcL_0,\dots,\mcL_n)^{\otimes
  r}\otimes_{\OO_{T'}}I_g(\mcL_0,\dots,\mcL_n,\mcH).
$$
\item[(iii)]
For every $\OO_Y$-module homomorphism $s_n:\mcL_n^\vee \to \OO_Y$
whose restriction to every fiber of $p$ is injective, for the closed
subscheme $\iota:Z\hookrightarrow Y$ that is the effective Cartier
divisor of $s$, there exists an invertible sheaf
$I_g(\mcL_0,\dots,\mcL_n,s_n)$ on $T'$ and an isomorphism of
$\OO_C$-modules,
$$
I_{g\circ \iota}(\iota^*\mcL_0,\dots,\iota^*\mcL_{n-1}) \cong
I_g(\mcL_0,\dots,\mcL_{n-1},\mcL_n) \otimes_{\OO_{T'}}
I_g(\mcL_0,\dots,\mcL_n,s_n). 
$$
\end{enumerate} 
\end{prop}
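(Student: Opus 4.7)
The plan is to reduce each of (i), (ii), (iii) to the regular-sequence computations of Propositions \ref{prop-Koszul}, \ref{prop-cones} and Corollary \ref{cor-cones}, then use the Stein-factorization hypothesis to recognize the residual ``correction'' line bundles on $C$ as pullbacks from $T'$. After Zariski cover of $T$, Lemma \ref{lem-divisor2} produces, for any prescribed finite list of invertible sheaves on $Y$ and any prescribed finite list of $T$-flat closed subschemes of $Y$, sections of high twists by $\OO(1)$ forming regular sequences on all relevant fibers; this flexibility, purchased by the $p$-ampleness of $\OO(1)$, is what the proposition relies on.

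For part (i), apply Lemma \ref{lem-divisor2} to produce, after a Zariski cover of $T$, regular sections of $\mcL_i\otimes\OO(m_i)$ for $i<n$ together with regular sections of $\mcA\otimes\OO(m)$, $\mcB\otimes\OO(m)$, $\mcA\otimes\mcB\otimes\OO(m)$, and $\OO(m)$ itself, all forming regular sequences on fibers of $p$ with a common prefix. Applying Propositions \ref{prop-Koszul} and \ref{prop-cones} iteratively to the pairs $(\mcA\otimes\OO(m),\OO(m))$, $(\mcB\otimes\OO(m),\OO(m))$, $(\mcA\otimes\mcB\otimes\OO(m),\OO(m))$, and $(\mcA\otimes\OO(m),\mcB\otimes\OO(m))$ yields canonical equalities of Cartier divisors on $C$ that telescope, after cancellation of the $\OO(m)$ factors, to a canonical Zariski-local trivialization of the discrepancy line bundle
$$\mcN := I_g(\mcL_0,\dots,\mcL_{n-1},\mcA)\otimes I_g(\mcL_0,\dots,\mcL_{n-1},\mcB)\otimes I_g(\mcL_0,\dots,\mcL_{n-1},\mcA\otimes\mcB)^{-1}.$$
Part (ii) reduces to the line-bundle case by additivity of $\text{det}(Rg_*(-))$ and then follows the same strategy with $\mc{H}$ adjoined to the collection for which regular sections are produced. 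For part (iii), since $s_n$ is fiberwise regular, $Z=V(s_n)$ is a $T$-flat effective Cartier divisor; a joint application of Lemma \ref{lem-divisor2} to the pair $(Y,Z)$ yields a sequence $(s_0,\dots,s_{n-1})$ that is fiberwise regular on both, and Proposition \ref{prop-Koszul} identifies $I_g(\mcL_0,\dots,\mcL_n)$ with $I_{g\circ\iota}(\iota^*\mcL_0,\dots,\iota^*\mcL_{n-1})$ up to a residual Div contribution on $C$ coming from the non-flat locus of $g$, which is the sought correction $I_g(\dots,\mcL_n,s_n)$.

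The main obstacle is proving that the canonical local trivializations above glue to a well-defined invertible sheaf on $T'$. The cleanest route is to verify that the underlying line bundles on $C$ (such as $\mcN$) are, Zariski-locally on $T'$, pullbacks from $T'$: by the Stein-factorization hypothesis, this amounts to showing they are trivial on every geometric fiber of $\pi'$. This reduction to a geometric fiber brings us into the setting where $T$ is the spectrum of an algebraically closed field; here a direct application of Lemma \ref{lem-divisor2} and Proposition \ref{prop-cones} produces an honest equality of Cartier divisors on the connected component of $\pi^{-1}(t)$, trivializing $\mcN$ there. The Stein property $(\pi')_*\OO_C=\OO_{T'}$ (in conjunction with the $\sS_2$ hypothesis on fibers of $\pi$, which controls Cartier divisors in codimension $\leq 1$) then promotes these fiberwise trivializations to a coherent trivialization relative to $\pi'$, from which standard descent recovers the invertible sheaf $I_{g,\pi}(\dots)$ on $T'$ whose pullback is isomorphic to $\mcN$.
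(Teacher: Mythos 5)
Your overall strategy for (i) and (ii) --- twist by high powers of $\OO(1)$ via Lemma \ref{lem-divisor2} so that every sheaf in sight is a ``difference'' of invertible sheaves admitting fiberwise regular sections, apply Proposition \ref{prop-cones}/Corollary \ref{cor-cones}, and telescope --- is exactly the paper's argument; the paper just makes the telescoping precise as a K-theoretic identity expanding $\langle \mcL_0,\dots,\mcL_n\rangle$ into an alternating sum of classes $\langle \mcL_{J',J''}\rangle$ indexed by partitions, with the sections chosen (via the auxiliary collection of closed subschemes $Z_{J'}$) to be regular simultaneously for \emph{all} of these mixed tuples, not only for those sharing a common prefix. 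You diverge in two places. First, for (iii) the paper chooses no further sections at all: it is a pure K-theory computation ($R(g\circ\iota)_*=Rg_*\circ R\iota_*$, the projection formula, and the Koszul resolution $\mcL_n^\vee\to\OO_Y$ of $\iota_*\OO_Z$ give $\langle\mcL_0,\dots,\mcL_n\rangle=[\mcH]\otimes R\iota_*\langle\iota^*\mcL_0,\dots,\iota^*\mcL_{n-1}\rangle$), reducing (iii) to (ii); your regular-sequence version can be made to work but is heavier. Second, and more seriously, your proposed gluing step --- ``by the Stein-factorization hypothesis, this amounts to showing they are trivial on every geometric fiber of $\pi'$'' --- is a see-saw argument that is not available here: the proposition does not assume $\pi$ is cohomologically flat in degree $0$, and without that hypothesis an invertible sheaf trivial on every geometric fiber need not be a pullback from $T'$ (the Stein factorization need not even commute with base change; compare Example \ref{ex-etalecohfl}). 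Fortunately you do not need this detour: your own construction already trivializes the discrepancy sheaf $\mcN$ over the preimage of a Zariski open cover of $T$ (Lemma \ref{lem-divisor2} applies over such a cover, not merely over points), and then $\pi'_*\OO_C=\OO_{T'}$ identifies the transition functions of these local trivializations with units on $T'$, which is precisely the gluing the paper uses to produce $I_{g,\pi}(\mcL_0,\dots,\mcL_{n-1},\mcA,\mcB)$ on $T'$. With the last paragraph replaced by that direct argument, your proof matches the paper's.
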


\begin{proof}
For a Stein factorization,
since $\pi'_*(\pi')^*\mcL$ equals $\mcL$ for every invertible sheaf on
$T'$, the invertible sheaves
$I_g(\mcL_0,\dots,\mcL_{n-1},\mcA,\mcB)$,
$I_g(\mcL_0,\dots,\mcL_n,\mcH)$, and $I_g(\mcL_0,\dots,\mcL_n,s_n)$
are uniquely determined and can be constructed after base change from
$T$ to a Zariski cover of $T$.  Thus, in what follows, we perform such
base changes freely.

\mni
\textbf{(i)}
Denote $\mcA\otimes_{\OO_Y}\mcB$ by $\mcL_n$.
Denote by $\ol{J}$ a set of size $n+3$, $\{0,\dots,n-1,n,A,B\}$.
Denote by $\wt{J}$, resp. $J$, $J_n$, the subset $\{0,\dots,n-1,A,B\}$,
resp. $\{0,\dots,n\}$, 
$\{0,\dots, n-1\}$.  For every subset $J'\subset \ol{J}$, define
$J'_n$ to be the subset $J'\cap J_n$ of $J_n$.

\mni
Define $P$ to be the subset of the power set of $\ol{J}$ consisting of
A
subset $J'\subset \ol{J}$ is \emph{of type $P$} if for the subset
$J'_n=J'\cap J_n$, one of the following hold: 
$$
J'=J'_n,\ 
J'=J'_n\sqcup\{A\},\  J'=J'_n\sqcup\{B\},\
J'=J'_n\sqcup\{A,B\},\text{ or }
J'=J'_n\sqcup\{n\}.
$$
Define $P$ to be the collection of all subsets $J'\subset \ol{J}$ of
this type.  Said differently $J'$ fails to be in $P$ if and only if
$J'$ contains the subset $\{A,n\}$ or contains the subset $\{B,n\}$.
Notice, since $J'_n$ has size $\leq n$, every $J'$ in $P$ has size
$\leq n+2$.

\mni   
By Lemma \ref{lem-divisor3}, 
after base
change of $T$ to a Zariski cover, 
there exists an integer $m\geq 1$ 
and
sections $(t_0,\dots,t_{n-1},t_A,t_B)_{j\in J}$, 
$$
t_j:\OO(-m)\to \OO_Y,
$$ 
such that for every
subset $J'\subset J$, 
the restriction of $(t_j)_{j\in J'}$ to every fiber of
$p$ is a regular sequence.  
Up to replacing $\OO(1)$ by $\OO(m)$,
assume that $m$ equals $1$.  

\mni
For every $J'$ in $P$ of size $e'$, we next define a length-$e'$ 
sequence $t_{J'}$ of
sections of ample invertible sheaves
whose restriction to every fiber of $p$ is a regular
sequence.  Thus the zero scheme $Z_{J'}$ of this regular sequence is
flat over $T$.  If $e'\leq n+1$, by considering the intersection with
fibers of $g$,
every fiber of $Z_{J'}\to T$ is nonempty, and hence has pure dimension
$n+d-e'$.  If $e'$ equals $n+2$, then over every connected open
subcheme of $T$ where $Z_{J'}\to T$ has some nonempty fiber, then
every fiber is nonempty of pure dimension $d-2$, but there may well be
connected components of $T$ over which $Z_{J'}$ is empty.

\mni
For $J'\subset \wt{J}$, define $t_{J'}$ to be $(t_j)_{j\in J}$.  By
construction, this is a regular sequence on every fiber of $p$.
For every subset $J'_n\subset J_n$, whose size $e$ automatically
satisfies $e\leq n$,  
on every fiber of $p$, both of the sequences $(t_j)_{j\in
  J'}\sqcup(t_A)$ and $(t_j)_{j\in J'}\sqcup (t_B)$
are
regular.  Since
both the
images $\ol{t}_A$ and $\ol{t}_B$ are regular modulo the sequence
$(t_j)_{j\in J'}$, also $\ol{t_At_B}$ is regular modulo
$(t_j)_{j\in J'_n}$, i.e., $(t_j)_{j\in J'_n}\cup (t_At_B)$ is a
regular sequence on every fiber of $p$.  For the set
$J'=J'_n\sqcup\{n\}$,   
define $t_{J'}$
to be the sequence $(t_j)_{j\in J'_n} \cup (t_At_B)$ of
length $e+1$.  Define $Z_{J'}$ to be the zero scheme of
this sequence. 

\mni
By Lemma \ref{lem-divisor2} applied to the closed subschemes
$(Z_{J'})_{J'\in P}$ and the sequence of invertible
sheaves $(\mcL_0,\dots,\mcL_{n-1},\mcA,\mcB)$, 
there exists a sequence of positive integers
$(m_0,\dots,m_{n-1},m_A,m_B)$ and a sequence $(s_0,\dots,s_{n-1},s_A,s_B)$ 
of $\OO_Y$-module
homomorphisms, 
$$
s_j:\OO(-m_j)\to \mcL_j, \ s_A:\OO(-m_A)\to \mcA, \ s_B:\OO(-m_B)\to \mcB
$$
such that for every $J'$ in $P$ with size $e'$ 
and for every finite subset
$J''\subset J$ whose size $e''$ satisfies
$e''\leq n+d+1-e'$, the sequence
$(s_j)_{j\in J'}$ is regular when restricted to every fiber of $Z_{J'}\to
T$.  

\mni
Define $m_n=m_A+m_B$ and define $s_n$ to be $s_A\otimes s_B$,
$$
\OO(-m_A-m_B)\xrightarrow{s_A\otimes s_B} \mcA\otimes_{\OO_Y}\mcB
= \mcL_n.
$$
For every $J'$ in $P$ with size $e'$, for every
subset $J''_n\subset J_n$ of size $e''_n\leq n+d-e'$, both of the
following sequences are regular when restricted to fibers of
$Z_{J'}\to T$: $(s_j)_{j\in J'_n}\sqcup (s_A)$ and $(s_j)_{j\in
  J'_n}\sqcup (s_B)$.  Thus, also the sequence $(s_j)_{j\in
  J'_n}\sqcup (s_n)$ is regular.  

\mni
For every $i=0,\dots,n-1$, define $\mcL_i' = \OO(m_i)$ and
$\mcL_i''=\mcL_i(m_i)$.  Similarly, define $\mcA' = \OO(m_A)$,
$\mcA''=\mcA(m_A)$, $\mcB'=\OO(m_B)$, and $\mcB''=\mcB(m_B)$.  For
every $i = 0,\dots,n-1$, define $s'_i$ to be $t_i^{m_i}$, and define
$s_i''$ to be $s_i$.  Define $\alpha'$, resp. $\beta'$, 
to be $t_A^{m_A}$, resp. $t_B^{m_B}$.  Define $\alpha''$,
resp. $\beta''$, to be $s_A$, resp. $s_B$.

\mni
Let $(J',J'')$ be a partition of $J$.  If $n\in J'$, resp. if $n\in
J''$, by construction the sequence
$$
(t_j)_{j\in J'_n} \sqcup (s_j)_{j\in J''_n} \sqcup (t_A,t_B),
$$
respectively the sequence 
$$
(t_j)_{j\in J'_n} \sqcup (s_j)_{j\in J''_n} \sqcup (s_A,s_B),
$$ 
is regular on every fiber of
$p$.  Thus, also the sequence 
$$
(t_j^{m_j})_{j\in J'_n} \sqcup (s_j)_{j\in J''_n} \sqcup (t_A^{m_A},t_B^{m_B}),
$$
respectively the sequence
$$
(t_j^{m_j})_{j\in J'_n} \sqcup (s_j)_{j\in J''_n} \sqcup (s_A,s_B),
$$ 
is regular on every fiber of $p$.  Thus, the hypotheses of Corollary
\ref{cor-cones} are satisfied.  In particular, for every invertible
sheaf $\mc{H}$ on $Y$,
$$
\text{det}(Rg_*(\mc{H}\otimes \langle \mcL_{J',J''} \rangle)) \cong
I_g(\mcL_{J',J''}) \cong
$$  
$$
I_g(\mcL_{J',J'',A})\otimes_{\OO_C} I_g(\mcL_{J',J'',B}) \cong
\text{det}(Rg_*(\mc{H}\otimes \langle \mcL_{J',J'',A} \rangle)) \otimes_{\OO_C}
\text{det}(Rg_*(\mc{H}\otimes \langle \mcL_{J',J'',B} \rangle)).
$$

\mni
In the
K-group of locally free $\OO_Y$-modules, there is an identity,
$$
[\mcL_j]-[\OO_Y] = [\OO(-m_j)]\otimes\lt( ([\mcL_j'']-[\OO_Y]) -
([\OO(m_j)]-[\OO]) \rt) = 
$$
$$
[\OO(-m_j)]\otimes\lt( ([\mcL_j'']-[\OO_Y]) - ([\mcL'_j]-[\OO_Y]) \rt).
$$
Denote $m=m_0+\dots+m_n$.  
Via the multiadditivity of the operation $(\mcL_0,\dots,\mcL_n)\mapsto
\langle \mcL_0,\dots,\mcL_n
\rangle$, in the K-group there is an identity,
$$
\langle \mcL_0,\dots,\mcL_n \rangle = [\OO(m)]\otimes
\lt(\sum_{r=0}^n (-1)^r \sum_{(J',J''), \#J'' = r} \langle
  \mcL_{J',J''} \rangle \rt).
$$
Since $Rg_*$ and $\text{det}$ are additive, using Proposition
\ref{prop-cones}, it follows that $I_g(\mcL_1,\dots,\mcL_n)$ equals an
alternating tensor product of invertible sheaves
$I_g(\mcL_{J',J''})$.  Thus, by Corollary \ref{cor-cones}, there is an
isomorphism
$$
I_g(\mcL_0,\dots,\mcL_{n-1},\mc{A}\otimes_{\OO_Y}\mc{B}) \cong
I_g(\mcL_0,\dots,\mcL_{n-1},\mc{A})
\otimes_{\OO_C} 
I_g(\mcL_0,\dots,\mcL_{n-1},\mc{B}).
$$

\mni
\textbf{(ii)}
Via the same strategy as in the proof (i), this follows from the
corresponding statement in Proposition \ref{prop-Koszul}.

\mni
\textbf{(iii)}
Since $R(g\circ \iota)_*$ equals $Rg_*\circ R\iota_*$, it suffices to
prove an identity 
$$
\langle \mcL_0, \dots,\mcL_{n-1},\mcL_n \rangle =
[\mcH]\otimes R\iota_* \langle \iota^*\mcL_0,\dots,\iota^*\mcL_{n-1}
\rangle
$$ 
for an invertible $\OO_Y$-module $\mcH$.  Since $\iota^*$
is a ring homomorphism of K-rings, 
$$
\langle \iota^*\mcL_0, \dots, \iota^*\mcL_{n-1} \rangle 
\cong 
\iota^* \langle \mcL_0,\dots, \mcL_{n-1} \rangle.
$$
Thus, by the projection formula,
$$
R\iota_* \langle \iota^*\mcL_0, \dots, \iota^*\mcL_{n-1} \rangle
\cong
\langle \mcL_0, \dots, \mcL_{n-1} \rangle
\otimes 
[\iota_* \OO_Z].
$$
Finally, the resolution $s_n:\mcL_n^\vee \to \OO_Y$ of $\iota_*\OO_Z$
gives an identity,
$$
[\mcH]\otimes [\iota_*\OO_Z] = \lt( [\mcL_n]-[\OO_Y] \rt), 
$$
for $\mcH=\mcL_n^\vee$. 
\end{proof}

\mni
The projectivity hypothesis on $p$ is only necessary fpqc locally.

\begin{cor} \label{cor-intersect} \marpar{cor-intersect}
In the previous proposition, replace the hypothesis that there exists a
$p$-ample invertible sheaf on $C$ with the hypothesis that for some fpqc
morphism $T_0 \to T$ there exists a $T_0$-ample invertible sheaf on 
$C\times_T T_0$.  Also assume that $\pi$ has a Stein factorization.
Then the proposition still holds.
\end{cor}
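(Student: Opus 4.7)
My plan is to prove the corollary by fpqc descent, using the Stein factorization to guarantee that the invertible sheaves on $T'$ produced by Proposition \ref{prop-intersect} are canonically determined. The starting observation is that the intersection sheaves $I_g(\mcL_0,\dots,\mcL_n)$ on $C$ are defined under Hypothesis \ref{hyp-perfect} without any projectivity hypothesis on $p$ (see Definition \ref{defn-DP}); only the identities (i), (ii), (iii) of Proposition \ref{prop-intersect}, which involve pulling back invertible sheaves from $T'$, required $p$-ampleness for their proof.

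First I would base change along $T_0\to T$, writing $T'_0=T'\times_T T_0$, $C_0=C\times_T T_0$, $Y_0=Y\times_T T_0$, with induced morphisms $\pi_0,g_0,p_0$ and the induced Stein factorization $C_0\to T'_0\to T_0$ (Stein factorizations are compatible with fppf, and in fact with arbitrary, base change under the proper, cohomologically flat in degree $0$ assumption, as recalled after Definition \ref{defn-SF}). The pulled-back $T_0$-ample sheaf makes $p_0$ (equivalently the corresponding morphism on $Y_0$) projective in the sense needed, so Proposition \ref{prop-intersect} applies to produce invertible sheaves $I_{g_0,\pi_0}(\dots)$ on $T'_0$ together with the isomorphisms of $\OO_{C_0}$-modules of parts (i), (ii), (iii).

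Next I would establish the descent datum. Let $T_{00}=T_0\times_T T_0$ with its two projections $\pr_1,\pr_2$ to $T_0$, and the analogous projections between the fiber products over $C$, $Y$, $T'$. The identities of Lemmas \ref{lem-C0} and \ref{lem-T0} show that the intersection sheaves $I_{g_0}(\mcL_0,\dots,\mcL_n)$ on $C_0$ are naturally compatible with further pullback along $\pr_1$ and $\pr_2$. Because $\pi'_*(\pi')^*\mcM=\mcM$ for every invertible sheaf $\mcM$ on $T'_{00}$, the invertible sheaf on $T'_{00}$ figuring in each of (i), (ii), (iii) is uniquely determined, up to unique isomorphism, by the $\OO_{C_{00}}$-module isomorphism it is required to satisfy. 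Applied to $\pr_1^*I_{g_0,\pi_0}(\dots)$ and $\pr_2^*I_{g_0,\pi_0}(\dots)$, which satisfy the same such isomorphism, this uniqueness produces a canonical isomorphism between them; the same uniqueness, now on the triple fiber product, gives the cocycle condition for free. By fpqc descent for quasi-coherent sheaves, this descent datum produces an invertible sheaf $I_{g,\pi}(\dots)$ on $T'$, and the defining $\OO_{C_0}$-module isomorphism of (i), (ii), or (iii) also descends (by descent for morphisms of quasi-coherent sheaves) to the required $\OO_C$-module isomorphism on $C$.

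The main obstacle, and the place where the Stein factorization is essential, is showing that the canonical isomorphism on the double fiber product is \emph{the} isomorphism and hence satisfies the cocycle identity automatically. Without $\pi'_*\OO_C=\OO_{T'}$ we would only have invertible sheaves $I_{g_0,\pi_0}(\dots)$ defined up to non-canonical isomorphism, and a priori not a descent datum. With the Stein factorization in place, the argument is uniform in the three cases (i), (ii), (iii); in each case the pattern is identical, replacing the defining isomorphism by the relevant one from Proposition \ref{prop-intersect}.
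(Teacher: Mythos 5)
Your proof is correct and follows essentially the same route as the paper: base change along the fpqc cover to gain projectivity, apply Proposition \ref{prop-intersect} there, and use the identity $\pi'_*(\pi')^*\mcM = \mcM$ from the Stein factorization to see that the resulting invertible sheaves on $T'\times_T T_0$ are uniquely determined and hence form an fpqc descent datum. Your write-up merely makes explicit the canonical isomorphisms on the double and triple fiber products and the automatic cocycle condition, which the paper leaves implicit.
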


\begin{proof}
Because of compatibility of pushforward and flat base change, the base
change of the Stein factorization,
$$
C\times_T T_0 \xrightarrow{\pi'\times \text{Id}} T'\times_T T_0
\xrightarrow{\rho \times \text{Id}} T_0,
$$
is a Stein factorization of $C\times_T T_0\to T_0$.  
Now use the same observation as in the previous proof:  because
$\pi'_*(\pi')^*\mcL$ equals $\mcL$ for every invertible sheaf on $T'$,
the invertible sheaves 
$I_g(\mcL_0,\dots,\mcL_{n-1},\mcA,\mcB)$,
$I_g(\mcL_0,\dots,\mcL_n,\mcH)$, and $I_g(\mcL_0,\dots,\mcL_n,s_n)$
are uniquely determined.  Thus, the invertible sheaves on $T'\times_T
T_0$ constructed using Proposition \ref{prop-intersect} satisfy the
fpqc descent condition.  
\end{proof}

\begin{cor} \label{cor-MG} \marpar{cor-MG}
Let $\pi:C\to T$ be a morphism that is proper, fppf of pure relative
dimension $d\geq 1$, and cohomologically flat in degree $0$.  Also
assume that every geometric fiber of $\pi$ satisfies Serre's condition
${\sS}_2$.  Then there exists a relative intersection sheaf for $\pi$ as in
Definition \ref{defn-intshf}.  Moreover, this intersection sheaf is
unique.  
\end{cor}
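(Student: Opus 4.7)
The plan is to construct the relative intersection sheaf using Definition \ref{defn-DP}, to verify the axioms using Proposition \ref{prop-intersect} together with a seesaw argument in $\Pic{}{C/T}$, and to deduce uniqueness by a regular-sequence induction reducing to the case $n=0$.

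For existence, given an intersection datum $(n, T_0, p_0, g_0, (\mcL_0,\dots,\mcL_n))$, I would declare $I_{g_0}(\mcL_0,\dots,\mcL_n) \in \Pic{}{C/T}(T_0)$ to be the class of the invertible sheaf $\text{det}(Rg_{0*}(\langle \mcL_0,\dots,\mcL_n\rangle))$ on $C_0 := C\times_T T_0$; this is well-defined by Definition \ref{defn-DP} since $g_0$ is proper and perfect by assumption and $\pi_0, p_0$ are flat. Axioms (0), (i), and (v) are immediate from the symmetry of $\langle\cdot\rangle$, the additivity of $\text{det}(Rg_{0*})$, and the construction; axiom (iii) is Lemma \ref{lem-T0}.

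The delicate axioms are (ii) and (iv). Proposition \ref{prop-intersect}, applied after an fppf localization of $T_0$ that makes $p_0$ projective, gives the required isomorphisms on $C_0$ only modulo tensoring by $(\pi'_0)^*\mcM$ for some invertible sheaf $\mcM$ on $T'_0 := T'\times_T T_0$, where $C\to T'\to T$ is the Stein factorization of $\pi$ (which exists and is base-change compatible by cohomological flatness in degree zero). I would then show $(\pi'_0)^*\mcM$ represents the neutral class in $\Pic{}{C/T}(T_0)$ via seesaw: each geometric fiber of $\pi_0$ is a disjoint union of connected components, each of which collapses under $\pi'_0$ to a single geometric point of $T'_0$, and $\mcM$ pulled back to a point is trivial; hence $(\pi'_0)^*\mcM$ is trivial on every geometric fiber of $\pi_0$, so by seesaw it is fppf-locally on $T_0$ the pullback of an invertible sheaf from $T_0$ itself and therefore represents the neutral class in $\Pic{}{C/T}(T_0)$. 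Axioms (ii) and (iv) follow.

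For uniqueness, let $I$ and $I'$ be two relative intersection sheaves and induct on $n$; the base case $n=0$ is pinned down by axiom (v). For $n \geq 1$, axiom (iii) reduces the equality to an fppf cover of $T_0$, over which we may fix a $p_0$-ample $\OO(1)$. By Lemma \ref{lem-divisor} applied separately to $\OO_{Y_0}$ and to $\mcL_n$, for $m$ sufficiently large and after a Zariski cover we obtain homomorphisms $t: \OO(-m)\to \OO_{Y_0}$ and $s: \OO(-m)\to \mcL_n$ whose restrictions to every fiber of $p_0$ are injective; equivalently, these furnish the regular $\OO_{Y_0}$-module maps $(\OO(m))^\vee \to \OO_{Y_0}$ and $(\mcL_n\otimes \OO(m))^\vee \to \OO_{Y_0}$ required by axiom (iv) with last invertible sheaves $\OO(m)$ and $\mcL_n\otimes \OO(m)$ respectively. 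Axiom (iv) expresses $I(\mcL_0,\dots,\mcL_{n-1},\OO(m))$ and $I(\mcL_0,\dots,\mcL_{n-1},\mcL_n\otimes\OO(m))$ as intersection sheaves with only $n$ invertible-sheaf arguments on the Cartier divisors $Z(t)$ and $Z(s)$, which agree for $I$ and $I'$ by the inductive hypothesis. Axiom (ii) gives
$$
I(\mcL_0,\dots,\mcL_n) \cong I(\mcL_0,\dots,\mcL_{n-1},\mcL_n\otimes\OO(m)) \otimes I(\mcL_0,\dots,\mcL_{n-1},\OO(m))^{\vee},
$$
and the same identity holds for $I'$, so $I = I'$ on the given datum.

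The main obstacle will be justifying axioms (ii) and (iv) in $\Pic{}{C/T}$, since Proposition \ref{prop-intersect} only supplies the isomorphisms modulo pullback from the Stein factorization $T'_0$ rather than from $T_0$. The seesaw step above bridges this gap, and it uses the cohomological flatness hypothesis in degree zero essentially, in order to ensure that the Stein factorization behaves well under base change on $T$.
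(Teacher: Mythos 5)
Your proposal is correct and follows essentially the same route as the paper: existence comes from the det-construction of Definition \ref{defn-DP} together with Proposition \ref{prop-intersect}, and uniqueness from reducing, via axioms (ii)--(iv) and sections that are regular on fibers (Lemma \ref{lem-divisor}), to the case $n=0$, which is pinned down by axiom (v). The one point where you are more explicit than the paper --- that the ambiguity by pullbacks from the Stein factorization $T'_0$ is invisible in $\Pic{}{C/T}$ --- is correctly resolved, though the cleanest justification is not seesaw on the (possibly disconnected) fibers but simply that $T'_0\to T_0$ is finite, so every invertible sheaf on $T'_0$ trivializes Zariski-locally on $T_0$.
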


\begin{proof}
By the proposition, the intersection sheaves from Definition
\ref{defn-DP} satisfies the axioms from Definition \ref{defn-intshf}.
Moreover, the proof of the proposition proves that, via Axioms (ii) and (iii), 
the intersection
sheaf for an arbitrary $(n+1)$-tuple of invertible sheaves 
can be reconstructed from those $(n+1)$-tuples of invertible sheaves that
admit sections forming a regular sequence of length $n+1$.  For such
$(n+1)$-tuples, Axiom (iv) and induction on $n$ reduces to the case
that $n=0$.  For $n=0$, Axiom (v) uniquely determines the intersection
sheaf.  Thus, the relative intersection sheaf is unique.  
\end{proof}

%%%%%%%%%%%%%%%%%%%%%%%%%%%%%%%%%%%%%%%%%%%%%%%%%%%%%%%%%%%%%%%
%%
%% Section: Tori and Torsors
%% 
%%%%%%%%%%%%%%%%%%%%%%%%%%%%%%%%%%%%%%%%%%%%%%%%%%%%%%%%%%%%%%%

\section{Tori and  Torsors} \label{sec-torsors}
\marpar{sec-torsors}

\mni
The intersection sheaves above are sufficient to construct Abel maps in
case the target has Picard rank one.  To deal with higher Picard rank,
it is necessary to generalize the intersection sheaf from
$\Gm{m}$-torsors to torsors for a more general group scheme $Q$ over $C$, as in
\cite[Expos\'{e} XVIII, Formulaire 1.3.8]{SGA4T3}.  
The group scheme $Q$ will be 
isomorphic to a product of copies of $\Gm{m}$, i.e., a split torus,
after pullback by a finite, \'{e}tale morphism $C_0\to C$.
Unfortunately, cohomological flatness in degree $0$ is not preserved
by finite, \'{e}tale morphisms.  

\begin{ex} \label{ex-etalecohfl} \marpar{ex-etalecohfl}
Let $k$ be a field.  Let $\ol{C}$ be a smooth, projective, geometrically
connected curve over $k$ of genus $g\geq 2$.  For simplicity assume
that $\ol{C}$ has a $k$-point, so that there exists an
invertible sheaf $\mcP$ on $\ol{C}\times_{\SP k} \Pic{2g-2}{\ol{C}/k}$
representing the relative Picard functor.  The pushforward
$\text{pr}_{2,*}\mcP$ is flat and of formation compatible with
arbitrary base change when restricted over the open complement of the
unique $k$-point parameterizing the dualizing sheaf $\omega$.  
On every open
neighborhood of $[\omega]$, this sheaf is neither
flat nor of formation compatible with arbitrary base change.

\mni
Let $T\subset
\Pic{2g-2}{\ol{C}/k}$ be a dense open subset.  On $\ol{C}\times_{\SP
  k} T$, define $\mcA$ to be the commutative, coherent sheaf of algebras 
$$
\mcA = \OO \oplus \mcP\epsilon,
$$
where $\epsilon^2$ equals $0$.  Define $\nu:C\to \ol{C}\times_{\SP k}
T$ 
to be the relative Spec,
$C=\SP \mcA$.  The projection $\pi:C\to T$ is projective, flat, and
even LCI.  Since $\pi_*\OO_C$ equals $\OO_T\oplus
\text{pr}_{T,*}\mcP$, $\pi$ is cohomologically flat in degree $0$ if
and only if $T$ does not contain the distinguished $k$-point
$[\omega]$.  
Now let
$\ol{C}_0 \to \ol{C}$ be the finite, \'{e}tale morphism of degree $2$
associated to a nontrivial $2$-torsion invertible sheaf $\mcT$ on
$\ol{C}$.  This extends uniquely to a finite, \'{e}tale morphism of
degree $2$, $C_0\to C$.  The morphism $C_0\to T$ is cohomologically
flat in degree $0$ if and only if $T$ contains neither $[\omega]$ nor
$[\omega\otimes_{\OO_{\ol{C}}} \mcT]$.  Thus, it can happen that $C\to
T$ is cohomologically flat in degree $0$, yet $C_0\to T$ is not
cohomologically flat in degree $0$.
\end{ex}

\begin{hyp} \label{hyp-etalecohfl} \marpar{hyp-etalecohfl}
Let $T$ be quasi-compact.
Let $\pi:C\to T$ be a morphism that is proper, fppf of pure relative
dimension $d\geq 1$, and whose geometric fibers $C_t$ are reduced, are
${\sS}_2$, and have only finite
geometric covers.
That last hypothesis says that for every \'{e}tale morphism $b:C_0\to
C_t$ satisfying the valuative criterion of properness (but $b$ need
not be quasi-compact) and with $C_0$ having only finitely many
connected components, then $b$ is a finite morphism.
These hypotheses holds if $C_t$ is
normal or if $C_t$ is an at-worst-nodal curve of ``compact type''.
\end{hyp}

\begin{defn} \label{defn-mult} \marpar{defn-mult}
A \emph{torus} over $C$ is a smooth group scheme $Q$ over $C$ that is
\'{e}tale locally isomorphic to $\Gm{m,C}^\rho$ for an integer
$\rho\geq 0$, the \emph{rank} of the torus.  The \emph{Cartier dual}
of $Q$ is the \'{e}tale group scheme $Q^D$ over $C$ whose associated
\'{e}tale sheaf of Abelian groups is the sheaf
$\text{Hom}_{C-\text{gr}}(Q,\Gm{m,C})$.  This sheaf is locally
constant with fiber $\ZZ^\rho$.  
\end{defn}

\mni
The group scheme $Q^D\to C$ is never quasi-compact if $\rho>0$.
However, since $C$ is quasi-compact, $Q^D$ is a countable increasing
union of open subschemes that are quasi-compact.  Because $C$ is
quasi-compact, and because of the
hypothesis on the geometric fibers of $\pi$, 
for the smallest open and closed
subscheme $Q_i^D$ of $Q^D$ containing a specified quasi-compact open, $Q_i^D$ is
finite over $C$.  
For every geometric point of $C$, some $Q^D_i$
contains a (finite) set of generators for the geometric fiber $Q^D$
(as a group).  Again using that $C$ is quasi-compact, there exists a
$Q^D_i$ that generates $Q^D$ as a group scheme.

\begin{ex} \label{ex-nonqc} \marpar{ex-nonqc}
If we drop the hypothesis on the fibers of $\pi$, this
can easily fail.  For instance, let $C$ be a nodal plane cubic, let
$b:C_0\to C$ be the unique finite, \'{e}tale morphism of degree $2$
with connected domain, and let $Q^D$ be the \'{e}tale group scheme
that is isomorphic to $\ZZ^2$ on each of the two irreducible
components of $C_0$, yet where the glueing isomorphisms at the two
nodes differ by an infinite order automorphism of $\ZZ^2$, e.g.,
$(m,n)\mapsto (m,m+n)$.  Presumably there is a hypothesis weaker than
unibranch that works and that allows the fibers to be nonreduced (yet
${\sS}_2$).  Reducedness of fibers is useful not only here, but also
because it implies cohomological flatness in
degree $0$ of $C_0\to T$ for all
finite, \'{e}tale covers of $C$.
\end{ex}

\mni
For 
every set $\Sigma$, denote by $L_{C,\Sigma}$ the \'{e}tale $C$-scheme
together with a set map $\lambda:\Sigma\to L_{C,\Sigma}(C)$ that
represents the functor associating to every \'{e}tale $C$-scheme $L$
the collection of all set maps $\text{Hom}_{\textbf{Set}}(\Sigma, L(C))$.  Thus
the sheaf of $L_{C,\Sigma}$ is locally constant with fiber $\Sigma$.   
locally $\lambda$ defines an isomorphism of the constant sheaf
In particular,
$L_{C,\ZZ^\rho}$ is the \'{e}tale group scheme over $C$ with fiber
$\ZZ^\rho$.  

\begin{lem} \label{lem-tilde}
There exists a finite, \'{e}tale, surjective morphism $C_{0}\to
C$ representing the functor that associates to every $C$-scheme
$S$ the set of all isomorphisms of \'{e}tale group $C$-schemes,
$\phi:L_{S,\ZZ^\rho}\to S\times_{C} Q^D$, mapping the basis elements of
$\ZZ^\rho$ into the subscheme $S\times_{C} Q^D_i$.  
\end{lem}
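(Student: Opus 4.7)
The plan is to realize $C_{0}$ as an open-and-closed subscheme of the $\rho$-fold fibered self-product of $Q^{D}_{i}$ over $C$, cut out by the condition that the universal $\rho$-tuple of sections extends to an isomorphism of \'etale group $C$-schemes.

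First I form $F:=Q^{D}_{i}\times_{C}\cdots\times_{C}Q^{D}_{i}$ with $\rho$ factors. Since $Q^{D}_{i}\to C$ is finite \'etale, so is $F\to C$, and $F$ represents the functor sending a $C$-scheme $S$ to the set of $\rho$-tuples $(\sigma_{1},\dots,\sigma_{\rho})$ of sections of $S\times_{C}Q^{D}_{i}\to S$. By the universal property of $L_{S,\ZZ^{\rho}}$, each such tuple extends uniquely to a homomorphism of \'etale group $S$-schemes $\Phi:L_{S,\ZZ^{\rho}}\to S\times_{C}Q^{D}$ whose images of the standard basis lie in $S\times_{C}Q^{D}_{i}$, and conversely every such $\Phi$ arises uniquely this way. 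So $F$ represents the ``homomorphism'' version of the desired functor.

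Next, over $F$ I consider the universal homomorphism $\Phi_{\mathrm{univ}}:L_{F,\ZZ^{\rho}}\to F\times_{C}Q^{D}$. Both source and target are \'etale group schemes over $F$ locally constant with fiber $\ZZ^{\rho}$: $L_{F,\ZZ^{\rho}}$ is literally a disjoint union of copies of $F$, and $F\times_{C}Q^{D}$ becomes constant after pulling back by an \'etale cover of $F$ that trivializes $Q^{D}$. \'Etale locally on $F$, therefore, $\Phi_{\mathrm{univ}}$ is given by a $\rho\times\rho$ integer matrix that is locally constant on $F$; in particular its determinant is a well-defined locally constant integer-valued function on $F$. The locus where this determinant is $\pm 1$ is precisely where $\Phi_{\mathrm{univ}}$ is an isomorphism of \'etale group schemes, and it is a union of connected components of $F$. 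Define $C_{0}$ to be this open-and-closed subscheme of $F$; then $C_{0}\to C$ is finite \'etale, and by construction it represents the functor in the lemma.

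The main obstacle is surjectivity of $C_{0}\to C$: I must show that at every geometric point $\bar{c}$ of $C$ the finite fiber $Q^{D}_{i}(\bar{c})\subset Q^{D}_{\bar{c}}\cong\ZZ^{\rho}$ contains a $\ZZ$-basis. The hypotheses preceding the lemma provide only that $Q^{D}_{i}$ generates $Q^{D}$ as a group scheme, so that $Q^{D}_{i}(\bar{c})$ spans $\ZZ^{\rho}$ as an abelian group---which is strictly weaker than containing a basis, as $\{2,3\}\subset\ZZ$ illustrates. To repair this I would enlarge $Q^{D}_{i}$: using that $Q^{D}\to C$ is \'etale locally trivial and $C$ is quasi-compact, finitely many additional \'etale-local sections of $Q^{D}$ suffice to supply a basis at each geometric point of $C$, and these may be absorbed into a larger quasi-compact open of $Q^{D}$ that is still finite over $C$ (by the hypothesis on geometric covers of the fibers of $\pi$, which forced the original $Q^{D}_{i}$ to be finite in the first place). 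Once $Q^{D}_{i}$ has this stronger property, any \'etale-local choice of basis of $Q^{D}$ by sections of $Q^{D}_{i}$ produces a point of $C_{0}$ over that \'etale neighborhood of $C$, yielding surjectivity.
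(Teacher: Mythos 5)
Your proposal is correct and follows essentially the same route as the paper: your $\rho$-fold fibered product $F$ is exactly the paper's Hom scheme $\text{Hom}_{C}(L_{C,[\rho]},Q^D_i)$, and both arguments then carve out the open-and-closed locus where the determinant of the universal homomorphism is $\pm 1$. Your observation that ``$Q^D_i$ generates $Q^D$ as a group scheme'' is strictly weaker than ``$Q^D_i$ contains a $\ZZ$-basis at every geometric point'' (as $\{2,3\}\subset \ZZ$ shows) is a legitimate refinement of the surjectivity step that the paper elides, and your repair---enlarging $Q^D_i$ using finitely many \'etale-local basis sections and quasi-compactness of $C$---is the right one.
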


\begin{proof}
This is straightforward.  Since both $L_{C,[\rho]}$ and $Q^D_i$ are
finite, \'{e}tale over $C$, the Hom scheme
$H=\text{Hom}_{C}(L_{C,[\rho]},Q^D_i)$ is finite, \'{e}tale over
$C$.  The universal morphism over $H$ extends to a morphism of group
schemes $\phi:L_{H,\ZZ^\rho}\to H\times_{C} Q^D$.  The target is \'{e}tale
locally isomorphic to $L_{H,\ZZ^\rho}$, thus the determinant of $\phi$
is \'{e}tale locally well-defined up to a sign.  This determinant 
is an \'{e}tale
locally constant function.  Thus, there is an open and closed
subscheme $C_{0}$ of $H$ that is the locus on which this
determinant is $+1$ or $-1$.  As an open and closed subscheme of a
finite, \'{e}tale scheme over $C$, also $C_{0}$ is a finite,
\'{e}tale scheme over $C$.  Since $Q^D$ is \'{e}tale locally
isomorphic to $L_{C,\ZZ^\rho}$ and since $Q^D_i$ generates the group
scheme on all geometric fibers,
$C_{0} \to C$ is surjective.
\end{proof}

\begin{rmk} \label{rmk-tilde} \marpar{rmk-tilde}
By the lemma, $Q\times_C C_0$ is a split multiplicative group on
$C_0$.  By adjointness of pullback and restriction of scalars, there
is a natural morphism of group schemes $Q\to R_{C_0/C}(Q\times_C
C_0)$.  Checking \'{e}tale locally, this morphism is unramified and
locally split.  Thus, the lemma is just a global version of the
``standard'' argument that every torus embeds in the torus of a
``permutation module''.
\end{rmk}

\mni
For a morphism $a:C_0\to C$, define $C_1=C_0\times_C C_0$,
resp. $C_2 = C_0\times_C C_0 \times_C C_0$.   Denote the projections
by
$$
\pr{1},\pr{2}:C_1\to C_0,
$$
$$
\pr{1,2},\pr{1,3},\pr{2,3}:C_2\to C_1.
$$
A \emph{descent datum} of an affine $C$-scheme relative to $C_0$ is a
pair $(f_0:E_0\to C_0,\phi)$ of an affine morphism $f_0$ and an
isomorphism of $C_1$-schemes, $\phi:\pr{1}^*E_0\to \pr{2}^*E_0$ such
that the following \emph{cocycle condition} or \emph{descent
  condition} is satisfied,
$$
\pr{2,3}^*\phi \circ \pr{1,2}^*\phi = \pr{1,3}^*\phi.
$$
For descent data $(f_0:E_0\to C_0,\phi)$ and $(f'_0:E'_0\to
C_0,\phi')$, an \emph{morphism} between these descent data is an
morphism of $C_0$-schemes, $g_0:E_0\to E'_0$ such that
$\pr{2}^*g_0\circ \phi$ equals $\phi'\circ \pr{1}^*g_0$.  The identity is
a morphism, and the composition of two morphisms is an morphism.
Thus, descent data form a category.

\mni
For every affine morphism $f:E\to C$, the \emph{associated
  descent datum} is $E_0=E\times_C C_0$ with projection $f_0:E_0\to
C_0$ and with $\phi$ equal to the natural isomorphism 
$$
(E\times_{C,a} C_0)\times_{C_0,\pr{1}} C_1 \cong E\times_{C,a\circ
  \pr{1}} C_1 = E\times_{C,a\circ \pr{2}} C_1 \cong (E\times_{C,a}
C_0)\times_{C_0,\pr{2}} C_1.
$$
For an morphism of affine $C$-schemes, $g:E\to E'$, the associated
morphism of descent data is $g_0:E_0\to E_0'$,
$$
g\times \text{Id}_{C_0}: E\times_C C_0 \to E'\times_C C_0.
$$
Every descent datum isomorphic to the descent datum associated to an
affine $C$-scheme is an \emph{effective descent datum}.  The basic
result of effective fpqc descent is the following.

\begin{thm}
\cite[Th\'{e}or\`{e}me 2, p. 190-19]{FGA} 
\label{thm-descent} \marpar{thm-descent}
Assume that $C_0\to C$ is faithfully flat and quasi-compact.  For
every pair of affine $C$-schemes, $f:E\to C$ and $f':E'\to C$, every
morphism of the associated descent datum is associated to a unique
morphism $g:E\to E'$ of $C$-schemes.  Every descent datum of affine
schemes relative to $C_0$ is effective.  
\end{thm}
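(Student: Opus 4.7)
The plan is to reduce to the classical faithfully flat descent for modules and algebras over a commutative ring. Since both assertions (existence and uniqueness of morphisms, and effectiveness of descent data) are local on $C$ for the Zariski topology, I may first shrink $C$ to an affine open $C = \SP B$; up to replacing $C_0$ by a disjoint union of affine opens of $C_0$, I may also assume $C_0 = \SP B'$ where $B \to B'$ is a faithfully flat ring map. A descent datum of affine $C$-schemes then becomes a pair $(A',\phi)$ where $A'$ is a $B'$-algebra and $\phi\colon A'\otimes_B B' \to B'\otimes_B A'$ is a $B'\otimes_B B'$-algebra isomorphism satisfying the cocycle identity in $B'\otimes_B B'\otimes_B B'$. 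The candidate descended algebra is the equalizer
$$
A := \{\, a' \in A' \mid \phi(a' \otimes 1) = 1 \otimes a' \,\},
$$
a $B$-subalgebra of $A'$, with the structural map $\SP A' \to \SP A$ playing the role of effective descent.

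The main technical input is the exactness, for every $B$-module $M$, of the Amitsur-type sequence
$$
0 \to M \to M \otimes_B B' \to M \otimes_B B' \otimes_B B',
$$
where the first map is $m \mapsto m\otimes 1$ and the second is $m\otimes b' \mapsto m\otimes 1\otimes b' - m\otimes b'\otimes 1$. I would prove this lemma in two steps. First, when the structural map $B \to B'$ admits a $B$-linear retraction, a standard contracting homotopy shows the sequence is split exact. In general, tensor the sequence with $B'$ over $B$: the resulting complex is the Amitsur sequence for the ring map $B' \to B' \otimes_B B'$, and this does admit a retraction, namely the multiplication map $B' \otimes_B B' \to B'$. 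So exactness holds after tensoring with $B'$, and faithful flatness of $B \to B'$ transports exactness back to the original sequence.

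Applied to the underlying $B$-module of $A'$, the exactness lemma gives an isomorphism $A \otimes_B B' \xrightarrow{\sim} A'$ of $B'$-modules; the cocycle condition makes this compatible with multiplication, so it is an isomorphism of $B'$-algebras, which is exactly effectivity of the descent datum. For the morphism assertion, a morphism of descent data $g_0\colon A' \to (A^*)'$ sends $A$ into $A^*$ by its compatibility with $\phi$ and $\phi^*$, and the resulting $B$-algebra map $A \to A^*$ is the unique one inducing $g_0$ by injectivity of $A \otimes_B B' \to A'$. The main obstacle, as usual in fpqc descent arguments, is the Amitsur exactness lemma itself: it is the only nontrivial ingredient and the only place where faithful flatness is used crucially. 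Everything else is bookkeeping between the geometric language of descent data and the algebraic language of modules with compatible identifications over the cover.
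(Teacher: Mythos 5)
This statement is quoted verbatim from Grothendieck's FGA and the paper supplies no proof of its own, so there is no internal argument to compare against; what you have written is the standard proof of faithfully flat descent for quasi-coherent algebras, and it is essentially correct. Two points need more care when fleshed out. First, the reduction to $C=\SP B$, $C_0=\SP B'$ is not purely a matter of shrinking: after covering $C$ by affines you replace the quasi-compact preimage in $C_0$ by a finite disjoint union of affines, and you must then check that descent data relative to the refined cover are equivalent to descent data relative to the original one; this is standard but is a genuine separate lemma (full faithfulness of pullback first, then essential surjectivity), not mere bookkeeping. Second, the cocycle condition does more than make $A\otimes_B B'\to A'$ multiplicative: it is what forces that map to be an isomorphism of modules in the first place. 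Amitsur exactness applied to the $B$-module underlying $A'$ does not by itself identify $A\otimes_B B'$ with $A'$; the usual argument base-changes the entire descent datum along $B\to B'$, uses the multiplication retraction of $B'\to B'\otimes_B B'$ to trivialize it there --- and it is precisely the cocycle identity that makes this trivialization consistent --- and then descends. Without the cocycle condition the equalizer $A$ can be too small and $A\otimes_B B'\to A'$ need not be surjective. With those two steps supplied, your argument is the classical one and does prove the theorem; the paper's own ``approach'' is simply the citation to FGA.
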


\mni
This is relevant here because $Q$-torsors are affine and fpqc.  Thus,
they can be used both as the affine scheme and the fpqc cover in the
previous theorem.  In particular, this leads quickly to an existence
result for ``induced torsors''.  Let $Q$ and $Q'$ be faithfully flat,
finitely presented group schemes over a scheme $C$.  Let $\psi:Q\to
Q'$ be a morphism of $C$-group schemes.  Denote by $m_{\psi}:Q\times_C
Q'\to Q'$ the left $Q$-action by multiplication on the \emph{right} by
the inverse, $m_{\psi}(q,q') = q' \psi(q)^{-1}$.  
For every left $Q$-torsor $E$,
say $m_E:Q\times_C E \to E$,
$C$, there is an induced ``diagonal'' left action of $Q$ on $Q'\times_C E$,
$$
m_{\psi,E}:Q\times_C Q'\times_C E \to Q'\times_C E, \ \pr{2}\circ
m_{\psi,E} = m_E\circ \pr{1,3}, \ \pr{2}\circ m_{\psi,E} =
m_{\psi}\circ \pr{1,2}.
$$
Altogether, this makes $Q'\times_C E$ into a $Q'\times_C Q$-torsor
over $C$. (This is the reason for using the inverse of the right
$Q$-action on $Q'$; so that it commutes with the left regular action
of $Q'$ on itself.)
The goal is to construct a left $Q'$-torsor $\psi_*E$ and a $Q$-invariant,
left $Q'$-equivariant morphism $p_{\psi,E}:Q'\times_C E \to \psi_*E$
realizing $Q'\times_C E$ as a $Q$-torsor over $\psi_*E$.  In
particular, $p_{\psi,E}$ is a categorical quotient of the diagonal
action.  Thus, if $p_{\psi,E}$ exists, then it is unique up to unique
isomorphism.  This strong uniqueness insures the cocycle condition for
a descent datum.

\begin{cor} \label{cor-psi} \marpar{cor-psi}
There exists a left $Q'$-torsor $\psi_*E$ and a morphism
$p_{\psi,E}:Q'\times_C E\to \psi_*E$ that is $Q'$-equivariant, that is
invariant for the diagonal $Q$-action, and that realizes $Q'\times_C
E$ as a $Q$-torsor over $\psi_*E$ for its diagonal $Q$-action.
\end{cor}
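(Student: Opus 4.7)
The plan is to construct $\psi_* E$ and $p_{\psi,E}$ by fpqc descent along the structural map $a : E \to C$, invoking Theorem~\ref{thm-descent}. Since $E$ is a left $Q$-torsor and $Q \to C$ is faithfully flat and of finite presentation, so is $a$; moreover, pulling $E$ back along itself yields the canonical trivialization $E \times_C E \cong Q \times_C E$ coming from the torsor ``division'' map. Combined with the uniqueness observation already made in the paragraph preceding the corollary, it therefore suffices to build the pair $(\psi_* E, p_{\psi,E})$ after base change to $E$, together with a descent datum relative to $a$.

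First I would construct the quotient locally on $E$. Under the trivialization above, the pullback of $Q'\times_C E$ to $E$ becomes $Q'\times_C Q\times_C E$, and the diagonal $Q$-action is left multiplication on the middle factor, twisted by $\psi$ and inversion in the prescribed way. The map sending $(q',q,e)$ to $(q'\psi(q)^{-1},e)$ is manifestly $Q$-invariant, realizes its source as a left $Q$-torsor over $F_0 := Q'\times_C E$ (regarded as an affine scheme over $E$ via the second projection), and is equivariant for the residual left $Q'$-action by multiplication on the first factor.

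Next I would promote $F_0$ to a descent datum for $a$. Over $E\times_C E$, the two pullbacks $\pr{1}^* F_0$ and $\pr{2}^* F_0$ are both categorical quotients of the common scheme $(Q'\times_C E)\times_C E\times_C E$ by the diagonal $Q$-action (obtained from the two trivializations of $E\times_C E$), so the uniqueness of the quotient supplies a canonical isomorphism $\phi$ between them. The cocycle condition over the triple fiber product follows by the same uniqueness argument applied one step higher, on $E\times_C E\times_C E$. Because $F_0 \to E$ is affine ($Q'$ being affine over $C$), Theorem~\ref{thm-descent} yields an affine $C$-scheme $\psi_* E$ and an isomorphism $(\psi_* E)\times_C E \cong F_0$ compatible with the descent datum; the $Q'$-action on $F_0$ and the tautological morphism $(Q'\times_C E)\times_C E \to F_0$ are compatible with the descent datum, and so descend to a left $Q'$-action on $\psi_* E$ and a morphism $p_{\psi,E} : Q'\times_C E \to \psi_* E$.

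Finally, the remaining axioms -- that $\psi_* E$ is a left $Q'$-torsor, that $p_{\psi,E}$ is $Q$-invariant and $Q'$-equivariant, and that it realizes $Q'\times_C E$ as a left $Q$-torsor over $\psi_* E$ -- are all fpqc-local on $C$, so each reduces after base change to $E$ to the explicit local picture, where they are tautological. The main obstacle is the careful bookkeeping with the two commuting actions and the sign conventions in the diagonal $Q$-action; the cocycle verification is routine given uniqueness of categorical quotients, and no input deeper than Theorem~\ref{thm-descent} is needed.
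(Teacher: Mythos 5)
Your proposal follows the paper's proof essentially verbatim: descend along the fpqc morphism $a:E\to C$ via Theorem~\ref{thm-descent}, use the torsor trivialization $\lambda_E$ to identify the $a$-pullback of $Q'\times_C E$ with the trivial $Q'\times_C Q$-torsor over $E$, write down the explicit $Q'$-equivariant, $Q$-invariant quotient there, and let the strong uniqueness of the categorical quotient supply the descent isomorphism and the cocycle condition for free. The one discrepancy is a sign: with the paper's convention $m_\psi(q,q')=q'\psi(q)^{-1}$ and the trivialization $\lambda_E(q,e)=(m_E(q,e),e)$, the invariant quotient map is $(q',q,e)\mapsto(q'\psi(q),e)$ rather than $(q',q,e)\mapsto(q'\psi(q)^{-1},e)$; this depends only on which of the two torsor ``division'' maps you fix and does not affect the structure of the argument.
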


\begin{proof}
Since $Q$ is fpqc, and since $E$ is isomorphic to $Q$ as a $Q$-torsor
after passing to an fpqc cover of $C$, also
the projection morphism $a:E\to C$ is fpqc.  By Theorem
\ref{thm-descent},
\cite[Th\'{e}or\`{e}me 2, p. 190-19]{FGA},
it suffices to construct an $a$-descent datum for $\psi_*E$ and
$p_{\psi,E}$ satisfying the conditions.  Because of the strong
uniqueness, both the morphism $\phi$ in the descent datum and the
cocycle conditions are automatic.  Thus, it suffices to prove the
existence of a $Q'$-torsor $\psi_*E_a$ over $E$ 
and a morphism
$p_{\psi,E}:(Q'\times_C E)\times_{C,a}E \to \psi_*E_a$ 
from the $a$-pullback of
$Q'\times_C E$ to $\psi_*E_a$ that is left $Q'$-equivariant, that is
$Q$-invariant, and that realizes $(Q'\times_C E)\times_{C,a} E$ as a
$Q$-torsor over $\psi_*E_a$.

\mni
Since $E$ is a $Q$-torsor, the following morphism is an isomorphism,
$$
\lambda_E : Q\times_C E \to E\times_C E,\ \pr{1}\circ\lambda_E = m_E,
\  \pr{2}\circ\lambda_E =
\pr{2}. 
$$
Thus, there is an induced isomorphism,
$$
\text{Id}_{Q'} \times \lambda_E: Q'\times_C Q\times_C E \to Q'\times_C
E\times_C E.
$$
This is an isomorphism from the trivial $Q'\times_C Q$-torsor over $E$
to the pullback by $a$ of the $Q'\times_C Q$-torsor $Q'\times_C E$.
Consider the following morphism,
$$
r: Q'\times_C Q\times_C E \to Q'\times_C E, \ (q',q,e)\mapsto (q'\psi(q),e).
$$
This is left $Q'$-equivariant, it is $Q$-invariant, and it realizes
$Q'\times_C Q\times_C E$ as a $Q$-torsor over $Q'\times_C E$.  
Since $\text{Id}_{Q'}\times \lambda_E$ is an isomorphism,
there is a unique morphism
$$
p_{\psi,E,a}:Q'\times_C E\times_C E \to Q'\times_C E
$$
such that $p_{\psi,E,a}\circ (\text{Id}_{Q'}\times \lambda_E)$ equals
$r$.  Thus, $p_{\psi,E,a}$ is also left $Q'$-equivariant, it is
$Q$-invariant, and it realizes $Q'\times_C E\times_C E$ as a
$Q$-torsor over $Q'\times_C E$.   
\end{proof}

\mni
The operation $E\mapsto \psi_*E$ induces a $1$-morphism of classifying
stacks.  

\begin{defn} \label{defn-stack} \marpar{defn-stack}
For $C$ as in Hypothesis \ref{hyp-etalecohfl} and for $Q$ a torus over
$C$, 
denote by $BQ$ the $C$-stack classifying $Q$-torsors.  This is a
quasi-compact, quasi-separated algebraic $C$-stack with affine
diagonal.  Denote by $BQ_{C/T}$ the $T$-stack $\text{Hom}_T(C,BQ)$.
For every morphism $\psi:Q\to Q'$ of tori over $C$, denote by
$B\psi:BQ\to BQ'$, resp. by $B\psi_{C/T}:BQ_{C/T}\to BQ'_{C/T}$, 
the $1$-morphism defined by $E\mapsto \psi_* E$.
\end{defn}

\begin{prop} \label{prop-repcohfl} \marpar{prop-repcohfl}
If $\pi:C\to T$ is proper and fppf, then $BQ_{C/T}$ is an algebraic
$T$-stack that is locally finitely presented and whose diagonal is
quasi-compact and separated.  If $\pi$ satisfies
Hypothesis \ref{hyp-etalecohfl}, then $BQ_{C/T}$ has a coarse moduli
space $|BQ_{C/T}|$ that is locally finitely presented and
quasi-separated (typically not separated).  Assuming, moreover, that
$\pi$ has geometrically reduced fibers and that
the finite part $T'/T$ of the Stein factorization of $C/T$ is
\'{e}tale, fppf locally 
$BQ_{C/T}\to |BQ_{C/T}|$ is a gerbe for a commutative, fppf
group scheme that is the kernel of a morphism of tori.  Finally,
assuming further that a
$Q$-trivializing, finite, \'{e}tale, surjective cover 
$b:C_0\to C$ of $Q$ is geometrically integral over
the Stein factorization $T'$ of $C/T$, and assuming there exists a
$T'$-section of $C_0$,
then
this gerbe is split, i.e., there is a section $|BQ_{C/T}|\to BQ_{C/T}$.
\end{prop}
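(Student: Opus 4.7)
\mni
The plan is to handle the four assertions in order, using throughout the exact sequence $1\to Q\to \wt{Q}\to \wt{Q}/Q\to 1$ with $\wt{Q}:=R_{C_0/C}(\Gm{m,C_0}^\rho)$ from Lemma \ref{lem-tilde} and Remark \ref{rmk-tilde}. For assertion (i), I would invoke a Hom-stack representability theorem (e.g. Olsson's). Since $\pi:C\to T$ is proper and fppf and the $C$-stack $BQ$ is smooth (as $Q\to C$ is smooth) with affine relative diagonal (its relative inertia is $Q$ itself), $BQ_{C/T}=\underline{\text{Hom}}_T(C,BQ)$ is an algebraic $T$-stack locally of finite presentation. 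Its diagonal is the Isom-sheaf of pairs of $Q$-torsors on fibers of $\pi$, an fppf-twist of $\pi_*Q$, which via Remark \ref{rmk-tilde} sits inside the affine Weil restriction $R_{C/T}(\wt{Q})$; the diagonal is therefore affine, and in particular quasi-compact and separated.

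\mni
For (ii) and (iii) I would identify the inertia. Since $Q$ is commutative, automorphism sheaves of $Q$-torsors are central, so the inertia of $BQ_{C/T}$ is $\pi_*Q$. Apply $\pi_*$ to $1\to Q\to \wt{Q}\to \wt{Q}/Q\to 1$. Under the hypotheses of (iii), the étale cover $b:C_0\to C$ pulls geometrically reduced fibers to geometrically reduced fibers, so $b\circ\pi:C_0\to T$ is cohomologically flat in degree $0$ by Raynaud, and its Stein factorization $T'_0/T$ is the finite étale pullback of $T'/T$ along $b$. Hence $\pi_*\wt{Q}=(b\circ\pi)_*\Gm{m,C_0}^\rho\cong R_{T'_0/T}(\Gm{m,T'_0}^\rho)$ is a torus over $T$; applying Lemma \ref{lem-tilde} to the torus $\wt{Q}/Q$ analogously shows $\pi_*(\wt{Q}/Q)$ lies inside a torus. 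By left-exactness of $\pi_*$, the inertia $\pi_*Q$ is thus identified with the kernel of a morphism of tori, a commutative fppf group scheme, which gives (iii). For (ii), the inertia being fppf flat allows me to construct $|BQ_{C/T}|$ fppf-locally on $T$ as the rigidification of $BQ_{C/T}$ by its inertia (equivalently as the fppf sheaf $R^1\pi_*Q$); these patch globally by fppf descent, and local finite presentation and quasi-separation are inherited from $BQ_{C/T}$.

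\mni
For (iv), use the $T'$-section $\sigma:T'\to C_0$ and the composite $\wt\sigma:=b\circ\sigma:T'\to C$. The evaluation $\wt\sigma^*:BQ_{C/T'}\to B\Gm{m,T'}^\rho$ sends a $Q$-torsor $E$ to $\wt\sigma^*E$, using $\wt\sigma^*Q\cong\Gm{m,T'}^\rho$ from the $b$-trivialization of $Q$. Because $C_0$ is geometrically integral over $T'$, units on $C_0$ are pulled back from $T'$, so the inertia inclusion $\pi_*Q\hookrightarrow (b\circ\pi)_*\Gm{m,C_0}^\rho=\Gm{m,T'}^\rho$ from (iii) coincides with evaluation at $\wt\sigma$. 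Form the algebraic space $\mcX$ classifying pairs $(E,\tau)$ with $\tau$ a trivialization of $\wt\sigma^*E$: an automorphism of $E$ fixing $\tau$ must evaluate trivially at $\wt\sigma$, hence trivially on all of $C_0$ (geometric integrality), hence trivially on $C$ ($b$ is faithfully flat), so $\mcX$ has trivial stabilizers. The forgetful $\mcX\to BQ_{C/T'}$ is a $\pi_*Q$-torsor, the action of $\pi_*Q$ on trivializations being simply transitive after identifying $\pi_*Q\subseteq\Gm{m,T'}^\rho$ with the descent-invariant part of the latter. Consequently $\mcX$ trivializes the $\pi_*Q$-gerbe $BQ_{C/T'}\to |BQ_{C/T'}|$, and the isomorphism $\mcX\xrightarrow{\sim}|BQ_{C/T'}|$ (each pair up to iso is determined by $[E]$) composed with $\mcX\to BQ_{C/T'}$ gives the desired section.

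\mni
The main obstacle will be establishing the left-exactness and the Weil-restriction identification of $\pi_*$ applied to $1\to Q\to\wt{Q}\to\wt{Q}/Q\to 1$ used in (iii), which rests on Raynaud's theorem on cohomological flatness together with étale base-change compatibility of Stein factorizations. A secondary delicacy in (iv) is verifying that the image $\pi_*Q\subseteq \Gm{m,T'}^\rho$ acts simply transitively on the fibers of $\mcX\to|BQ_{C/T'}|$: this amounts to identifying $\pi_*Q$ precisely as the descent-invariant subgroup of $\Gm{m,T'}^\rho$ for the finite étale descent datum of $C_0\to C$, so that two pairs $(E_1,\tau_1),(E_2,\tau_2)$ in the same iso class are related by a unique element of $\pi_*Q$.
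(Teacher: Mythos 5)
Your treatment of the gerbe band and of the splitting is essentially the paper's: the paper likewise rigidifies along a trivialization of $E$ at a section $s=b\circ s_0$ and likewise exhibits the band as the kernel of a morphism of tori (it uses the \v{C}ech-type exact sequence $0\to R\to R^0\to R^1$ attached to the cover $C_0\to C$, where you instead push forward $1\to Q\to R_{C_0/C}(\Gm{m,C_0}^\rho)\to \wt{Q}/Q\to 1$; both are legitimate and both ultimately rest on the Stein factorizations of $C_0$ and $C_1$ over $T$ being finite \'{e}tale). The problems are in your first two steps. In assertion (i) the only hypotheses are that $\pi$ is proper and fppf: Hypothesis \ref{hyp-etalecohfl} is not in force, so Lemma \ref{lem-tilde} is not available (its proof needs the fibers to have only finite geometric covers so that $Q^D_i$ is finite over $C$), and the ``affine Weil restriction'' $R_{C/T}(\wt{Q})=\pi_*\wt{Q}$ need not be representable, let alone affine --- Example \ref{ex-etalecohfl} shows that $\pi_*$ of $\Gm{m}$ already misbehaves when cohomological flatness fails. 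So your derivation of quasi-compactness and separatedness of the diagonal does not go through under the stated hypotheses; the paper takes these properties from the cited representability theorems, and only under Hypothesis \ref{hyp-etalecohfl} reproves quasi-compactness by descending $Q$-torsors to tuples of invertible sheaves on $C_0$ and quoting the quasi-compactness of Isom schemes of invertible sheaves.

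For the coarse moduli space you rigidify along the inertia $\pi_*Q$, but you justify flatness of $\pi_*Q$ only ``under the hypotheses of (iii)'' (geometrically reduced fibers and \'{e}tale Stein factorization), whereas assertion (ii) is claimed under Hypothesis \ref{hyp-etalecohfl} alone. This is a logical gap as written. It happens to be repairable: a proper, flat, finitely presented morphism with geometrically reduced fibers automatically has locally constant number of geometric connected components, hence finite \'{e}tale Stein factorization, so under Hypothesis \ref{hyp-etalecohfl} the inertia is indeed the kernel of a morphism of tori and in particular flat --- but that implication is a genuine input which you must state and prove, since without it the rigidification is simply unavailable. The paper avoids the issue by a different route: it expresses the coarse moduli functor of $Q$-torsors as a relatively affine, finitely presented functor over the coarse moduli functor of split-torus torsors on $C_0$, i.e., over a product of Picard functors, and then cites Artin's representability theorem; no flatness of the inertia enters. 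If you prefer your rigidification argument, add the \'{e}taleness of the Stein factorization as a lemma and verify that the same holds for the covers $C_0\to T$ and $C_1\to T$ before applying it to parts (ii) and (iii).
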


\begin{proof}
By  
\cite[Proposition 2.3.4]{Lieblich} and 
\cite[Theorem 1.5]{OHom},
$BQ_{C/T}$ is a locally finitely presented algebraic $T$-stack whose
diagonal is quasi-compact and separated.  The proof that the diagonal
is quasi-separated is roughly the same as the proof that $BQ_{C/T}$
has a coarse moduli space, so here it is quickly.
By Lemma \ref{lem-tilde}, there exists a finite, \'{e}tale morphism
$a:C_0\to C$ such that $a^*Q$ is isomorphic to $\Gm{m,C_0}^\rho$.
Quasi-separatedness of this stack is equivalent to quasi-compactness
of the Isom scheme $\text{Isom}_{Q/T}(E,E')$ for $Q$-torsors $E$ and
$E'$ over $C$, not only for $Q$-torsors on $C/T$, but also for
$Q$-torsors on every base change
$C\times_T S /S$.   
By fpqc descent, Theorem \ref{thm-descent}, \cite[Th\'{e}or\`{e}me 2,
p. 190-19]{FGA},   
$Q$-torsors on $C$ are equivalent to 
$C_0/C$ descent data of $Q$-torsors.  On $C_0$, the $Q$-torsors are
equivalent to $\rho$-tuples of invertible sheaves.  So the Isom scheme
is an $\rho$-fold fiber product over $T$ of the Isom schemes of the
component invertible sheaves.
These Isom schemes are quasi-compact by
\cite[Corollaire 7.7.8]{EGA3}. 
The scheme
parameterizing the isomorphism $\phi$ on $C_1$ is again described in
terms of Isom schemes of invertible sheaves on $C_1$.  This Isom
scheme is again quasi-compact by 
\cite[Corollaire 7.7.8]{EGA3}. 
Of course to descend, this data must satisfy the cocycle
condition.  These are closed conditions: the subscheme parameteizing
data satisfying the cocycle condition is a closed subscheme.  A closed
subscheme of a quasi-compact scheme is again quasi-compact.  Thus,
altogether, the stack is quasi-separated.

\mni
In the same way, the coarse moduli functor of $Q$-torsors over $C$
is relatively a finitely presented, 
affine scheme over the coarse moduli functor of
$Q$-torsors on $C_0$.  Since the pullback of $Q$ to $C_0$ is
$\Gm{m,C_0}^\rho$, this functor is a quasi-separated, locally finitely
presented algebraic $T$-space by Hypothesis
\ref{hyp-etalecohfl} and 
\cite[Theorem 7.3]{Artin}.

\mni
Next, assume that $\pi$ has geometrically reduced fibers and that
$T'/T$ is \'{e}tale.  Then for every \'{e}tale cover $C_0\to C$, the
finite part of the Stein factorization, $T_0\to T$, 
is \'{e}tale.  In particular,
after a finite, \'{e}tale base change of $T$, assume that $T'$ is a
disjoint union of $m$ copies of $T$.  After a further fppf base change,
essentially by $C\to T$, assume that there is an $m$-tuple
$s=(s_1,\dots,s_m)$ of sections
$s_i:T\to C$ splitting the morphism $C\to T'$.  After a
further finite, \'{e}tale base change of $T$, there exist finite, \'{e}tale
morphism 
$b:C_0\to C$ and a trivialization $\psi: Q_0
\to
\Gm{m,C_0}^\rho$,
$Q_0 =Q\times_C C_0$,
such that $C_0\to T'$ is the Stein factorization and
such that there exists a lift,
$s_{i,0}:T\to C_0$, of each section $s_i$.  

\mni
Thus, assume now that $C_0\to T'$ has geometrically integral fibers,
and assume that
there exists a section $s_0:T'\to C_0$ of the
Stein factorization.  This induces a section $s=b\circ s_0:T'\to C$.
Since $C_0\to T'$ is
projective and flat with integral geometric fibers, every automorphism
of a $\Gm{m,C_0}^\rho$-torsor $E$ is just multiplication by a $T'$-section
of $\Gm{m,T}^\rho$.  
Denote by
$BQ_{C/T,s}$ 
the rigidified stack 
of
$Q$-torsors $E$ together with a trivialization $\tau:s^*E\cong s^*Q$ as
$s^*Q$-torsors.  By Theorem \ref{thm-descent}, 
\cite[Th\'{e}or\`{e}me 2, p. 190-19]{FGA},  
every automorphism of such a pair $(E,\tau)$ is uniquely determined by
the induced
automorphism of the corresponding object of $(BQ_0)_{C_0/T,s_0}$.
As noted above, these automorphisms are trivial.  Thus, the stack
$BQ_{C/T,s}$ is an algebraic space.  There is a forgetful morphism
$\Phi:BQ_{C/T,s} \to BQ_{C/T}$.  This is a torsor for the pullback to
$BQ_{C/T}$ of the group scheme $s^*Q$:  any two trivializations $\tau$
differ by post-composing by multiplication by a section of $s^*Q$.
Thus, the induced map of coarse moduli spaces is an isomorphism, i.e.,
$BQ_{C/T,s}$ is equivalent, as a $T$-stack and thus also as a
$T$-algebraic space, to the coarse moduli space $|BQ_{C/T}|$.
Therefore, the morphism $BQ_{C/T}\to |BQ_{C/T}|$ admits a section.

\mni
There is a commutative 
group scheme $R$ over $BQ_{C/T,s}$ defined as the pushforward
via $\pi$ of the Isom scheme of the torsor $E$.  For the finite,
\'{e}tale 
covers
$C_0\to C$, resp. $C_1\to C$, there are associated group schemes $R^0$
and $R^1$.  Again using Theorem \ref{thm-descent}, 
\cite[Th\'{e}or\`{e}me 2, p. 190-19]{FGA}, there is an exact sequence
of commutative group schemes,
$$
0 \to R \to R^0 \stackrel{d^0}{\twoheadrightarrow} R^1,
$$
where $d^0$ sends $g_0$ to $\phi'\circ \pr{1}^*g_0 - \pr{2}^*g_0\circ \phi.$
By construction, both $R^0$ and $R^1$ are tori: $R^0$ is isomorphic to
the Weil restriction  of
$\Gm{m,T'}^\rho$ 
for the finite, \'{e}tale cover $T'/T$, and $R^1$ is the Weil
restriction of a split torus for the finite, \'{e}tale cover
arising from its Stein factorization over $T$.  Thus, \'{e}tale
locally on $T$, the morphism $d^0$ is a morphism between split tori.
Choosing splittings of these tori, $d^0$ is equivalent to an integer
valued matrix.  Considering the Smith normal form of this matrix, $R$
is equivalent to a product of copies of $\Gm{m,T}$ and copies of
finite, flat group scheme $\mu_\ell$ for various integers $\ell$
(possibly divisible by the characteristic, thus not necessarily
\'{e}tale).  The $1$-morphism $BQ_{C/T}\to BQ_{C/T,s}$ is a gerbe for this
group scheme $R$.
\end{proof}

%%%%%%%%%%%%%%%%%%%%%%%%%%%%%%%%%%%%%%%%%%%%%%%%%%%%%%%%%%%%%%%
%%
%% Section: Intersection Torsors
%% 
%%%%%%%%%%%%%%%%%%%%%%%%%%%%%%%%%%%%%%%%%%%%%%%%%%%%%%%%%%%%%%%

\section{Intersection Torsors} \label{sec-inttor}
\marpar{sec-inttor}

\mni
There is a category $\textbf{Torus}_C$ 
whose objects are pairs $(a:C_0\to C,Q_0)$
of a finite, \'{e}tale morphism $a$ and a torus $Q_0$ on $C_0$.  For
objects $(a':C'_0\to C,Q'_0)$ and $(a:C_0\to C,Q_0)$, a morphism from
the first to the second is a pair $(b:C'_0\to C_0,\psi)$ of a
$C$-morphism $b$ (automatically finite and \'{e}tale) and a morphism of
group schemes over $C'_0$, $\psi:Q_0 \times_{C_0} C'_0\to Q'_0$ (note,
this is contravariant in the second argument).
Composition and identities are defined in the evident manner.  This
category is fibered over the category of finite, \'{e}tale
$C$-schemes, and each fiber category is an additive category (it is not
an Abelian category, because there are no cokernels satisfying the
usual axioms for an Abelian category).  The clivage associates to
$(a:C_0\to C,Q_0)$ and $b:C'_0\to C_0$ the object $(a':C'_0\to
C_0,Q_0\times_{C_0} C'_0)$. 

\mni
For a $Q_0$-torsor $E_0$ on $C_0$, there
is an associated torsor $b^*E_0$ on $C'_0$ for $Q_0\times_{C_0}
C'_0$.  By Corollary \ref{cor-psi}, 
there is an associated $Q'_0$-torsor $\psi_* b^*E_0$ on
$C'_0$. 
This \emph{pullback} torsor $(b,\psi)^*E_0$ is
contravariant in $(b,\psi)$ and covariant in $E_0$.  

\begin{defn} \label{defn-Qintshf} \marpar{defn-Qintshf}
Let $\pi:C\to T$ be a morphism satisfying Hypothesis
\ref{hyp-etalecohfl}.  For every object $(a:C_0\to C,Q_0)$ of
$\textbf{Torus}_C$, 
an \emph{intersection datum} for $\pi$ is a tuple
$$
(n,T_0\to T,p_0:Y_0\to T,g_0:Y_0\to T_0 \times_T C_0, 
(\mcL_0,\dots,\mcL_{n-1},E))
$$
of an integer $n\geq 0$, a $T$-scheme $T_0$, a proper, fppf morphism
$p_0$ of relative dimension $\leq d+n$ that is projective fppf locally
over $T_0$, a proper, perfect morphism of $T_0$-schemes, an $n$-tuple
of invertible sheaves $(\mcL_0,\dots,\mcL_{n-1})$ on $Y_0$, and a
torsor over $Y_0$ for $Q_0\times_{C_0} Y_0$.  For fixed
$(n,T_0,p_0,g_0)$, an \emph{isomorphism} between $(n+1)$-tuples
$(\mcL_0,\dots,\mcL_{n-1},E)$ and $(\mcL'_0,\dots,\mcL'_{n-1},E')$ is
an $n$-tuple of isomorphisms between the component invertible sheaves
$\mcL_i \to \mcL'_i$, and an isomorphism of torsors $E\to E'$ under
$Q_0\times_{C_0} Y_0$.  For a datum as above, and for a morphism in
$\mcC$, $(b,\psi):(C'_0,Q'_0)\to (C_0,Q_0)$, the
$(b,\psi)$-\emph{pullback} of the datum is the datum
$$
(n, T_0\to T, p_0\times \text{Id} : Y_0\times_{C_0} C'_0 \to T,
g_0\times \text{Id} : Y_0\times_{C_0} C'_0 \to T_0 \times_T C'_0,
$$
$$
(\pr{Y_0}^*\mcL_0, \dots,\pr{Y_0}^*\mcL_{n-1},\psi_*\pr{Y_0}^* E)).
$$
Similarly, for a datum as above and for a morphism of $T$-schemes,
$T_1\to T_0$, the \emph{pullback datum} is
$$
(n,T_1,Y_0\times_{T_0} T_1 \to T_1, Y_0\times_{T_0} T_1
\xrightarrow{g_0\times \text{Id}} C_0\times_{T_0} T_1,
(\pr{Y_0}^*\mcL_0, \dots, \pr{Y_0}^*\mcL_{n-1},\pr{Y_0}^*E)).
$$

\mni
A \emph{relative intersection torsor} for $\pi$ is an assignment to
every intersection datum of a section
$I_{g_0}(\mcL_0,\dots,\mcL_{n-1},E]$ over $T_0$ of $|(BQ_0)_{C_0/T}|$
that satisfies all of the following axioms.
\begin{enumerate}
\item[(0)] The assignment is invariant under isomorphism of
  $(n+1)$-tuples.
\item[(i)] The assignment is $\mf{S}_n$ -invariant in
  $(\mcL_0,\dots,\mcL_{n-1})$.  
\item[(ii)] The assignment is additive in every argument of
  $(\mcL_0,\dots,\mcL_{n-1},E)$.  
\item[(iii)] The assignment is compatible both for
  $(b,\psi)$-pullbacks and for pullbacks of $T$-schemes.
\item[(iv)] For every $i=0,\dots,n-1$, for every section
  $s_i:\mcL_i^\vee\ to \OO_{Y_0}$ that is regular on every fiber of
  $p_0$, for the associated Cartier divisor
  $\iota_0:Z_0\hookrightarrow Y_0$, 
$$
I_{g_0\circ
    \iota_0}(\iota_0^*\mcL_0,\dots,
  \iota_0^*\mcL_{i-1},\iota_0^*\mcL_{i+1},\dots,
  \iota_0^*\mcL_{n-1},\iota^*E] = I_{g_0}(\mcL_0,\dots,\mcL_{n-1},E].
$$
\item[(v)] If $n$ equals $0$, and if $Q_0$ equals $\Gm{m,C_0}$, then
for the $\Gm{m,C_0}$-torsor $E$ of an invertible sheaf $\mcL_0$,
$I_{g_0}(E]$ equals
$\text{det}(Rg_*(\mcL_0))\otimes_{\OO_{C_0}}\text{det}(Rg_*\OO_{Y_0})^\vee$. 
\end{enumerate}
\end{defn}

\begin{cor} \label{cor-MG2} \marpar{cor-MG2}
Let $\pi:C\to T$ be a morphism satisfying Hypothesis
\ref{hyp-etalecohfl}.  Also assume that every geometric fiber of $\pi$
satisfies Serre's condition ${\sS}_2$.  Then there exists a relative
intersection torsor for $\pi$, and this intersection torsor is unique.
\end{cor}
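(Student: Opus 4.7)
The plan is to mimic the uniqueness/existence argument of Corollary \ref{cor-MG} and to promote it to arbitrary tori via the finite \'etale trivializations produced by Lemma \ref{lem-tilde} together with fpqc descent (Theorem \ref{thm-descent}).

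\emph{Uniqueness.} I would first observe that axioms (ii)--(iv) of Definition \ref{defn-Qintshf} reduce the determination of $I_{g_0}(\mcL_0,\dots,\mcL_{n-1},E]$ to the case $n=0$, exactly as in the last paragraph of the proof of Corollary \ref{cor-MG}: by applying axiom (iv) repeatedly with sections forming a regular sequence of length $n$ (whose existence follows from Lemmas \ref{lem-divisor} and \ref{lem-divisor2} after Zariski refinement of $T_0$) one reduces to the $n=0$ case. For $n=0$, axiom (ii) is multiadditivity in the torsor argument $E$, so after passing to a finite \'etale cover $b:\wt{C}_0 \to C_0$ trivializing $Q_0$ (provided by Lemma \ref{lem-tilde}), axiom (iii) expresses $I_{g_0}(E]$ as the $(b,\text{Id})$-pullback of an intersection torsor for the split torus $\Gm{m,\wt{C}_0}^\rho$; additivity in each $\Gm{m}$-factor reduces this to the rank-one case, and axiom (v) with Corollary \ref{cor-MG} then pins down the value. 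The resulting value lies in $|(BQ_0)_{C_0/T}|$ by fpqc descent.

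\emph{Existence.} I would construct the intersection torsor in three stages. First, for $Q_0=\Gm{m,C_0}$ and a single $\Gm{m,C_0}$-torsor $E$ corresponding to an invertible sheaf $\mcL$ on $Y_0$, Corollary \ref{cor-MG} directly provides $I_{g_0}(\mcL_0,\dots,\mcL_{n-1},\mcL)$ together with all the required axioms. Second, for a split torus $\Gm{m,C_0}^\rho$ and an $\rho$-tuple of invertible sheaves $(\mcM_1,\dots,\mcM_\rho)$, define the intersection torsor to be the product $\prod_{k=1}^\rho I_{g_0}(\mcL_0,\dots,\mcL_{n-1},\mcM_k)$; multiadditivity of Corollary \ref{cor-MG} yields additivity in the torsor argument in the sense of axiom (ii). Third, for a general torus $Q_0$, use Lemma \ref{lem-tilde} to find a finite, \'etale, surjective $b:\wt{C}_0\to C_0$ with a chosen isomorphism $\wt{\psi}:Q_0\times_{C_0}\wt{C}_0\xrightarrow{\sim} \Gm{m,\wt{C}_0}^\rho$, and apply the split construction to $(b,\wt{\psi})^*E$ to obtain a $\Gm{m,\wt{C}_0}^\rho$-torsor on $C_0 \times_T T_0$ pulled back by $b$; then push forward along $\wt{\psi}^{-1}$ to obtain a $Q_0\times_{C_0}\wt{C}_0$-torsor; finally, descend along $b$ to a $Q_0$-torsor class in $|(BQ_0)_{C_0/T}|$ over $T_0$.

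\emph{Descent and verification of axioms.} The crucial point, and the main technical step, is that the intersection sheaf on $C_0 \times_T \wt{C}_0 \times_{C_0} \wt{C}_0$ obtained by either of the two projections agrees under the canonical identification, so that the cocycle condition of Theorem \ref{thm-descent} holds. This is exactly the content of Lemma \ref{lem-C0} (compatibility of $I_{g_0}$ with fppf base change $a:C_0'\to C_0$), applied to the two projections $\wt{C}_0\times_{C_0}\wt{C}_0\to \wt{C}_0$ and combined with uniqueness from Corollary \ref{cor-MG} to identify the two resulting split-torus intersection torsors. Independence from the choice of trivializing cover $b$ follows by passing to a common refinement and applying the same uniqueness. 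Axioms (0), (i), (iii), (iv), (v) transfer from the split case, using Lemma \ref{lem-C0} and Lemma \ref{lem-T0} for (iii), and axiom (ii) in the $Q_0$-torsor argument follows from Corollary \ref{cor-psi}, which makes $\psi_\ast(E\otimes E')$ canonically isomorphic to $\psi_\ast E \otimes \psi_\ast E'$ on the trivializing cover.

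\emph{Main obstacle.} The genuine difficulty is ensuring that the descent datum produced on $\wt{C}_0$ really descends to a well-defined section of $|(BQ_0)_{C_0/T}|$ rather than only to $BQ_{C_0/T}$ itself; this is where Proposition \ref{prop-repcohfl} is needed, since it guarantees that $|BQ_{C_0/T}|$ exists as a quasi-separated algebraic space under Hypothesis \ref{hyp-etalecohfl}, and that fpqc-locally the map $BQ_{C_0/T}\to |BQ_{C_0/T}|$ is a gerbe; hence a descent datum of torsors on $\wt{C}_0$ always gives rise to a well-defined point of $|BQ_{C_0/T}|$ even when it fails to lift to an actual $Q_0$-torsor on $C_0$.
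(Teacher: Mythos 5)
Your proposal is correct and follows essentially the same route as the paper: construct the intersection torsor for split tori componentwise from Corollary \ref{cor-MG}, use Lemma \ref{lem-tilde} to trivialize a general torus on a finite \'etale cover, and glue by fpqc descent (Theorem \ref{thm-descent}), with the cocycle condition supplied by compatibility of the split-torus construction with $(b,\psi)$-pullbacks. Your additional remarks on uniqueness and on landing in the coarse space $|(BQ_0)_{C_0/T}|$ elaborate points the paper leaves implicit, but they do not change the argument.
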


\begin{proof}
This is a consequence of Corollary \ref{cor-MG} and fpqc descent,
Theorem \ref{thm-descent}, 
\cite[Th\'{e}or\`{e}me 2, p. 190-19]{FGA}. 
Corollary \ref{cor-MG} gives
a relative intersection torsor whenever $Q_0$ is the split torus
$\Gm{m,C_0}^\rho$: for a $\Gm{m,Y_0}^\rho$-torsor
$E=(E_1,\dots,E_\rho)$ with $E_j$ associated to an invertible sheaf
$\mcN_j$, the intersection $\Gm{m,C_0}^\rho$-torsor
$(I_1,\dots,I_\rho)$ has $I_j$ associated to the invertible sheaf
$I_{g_0}(\mcL_0,\dots,\mcL_{n-1},\mcN_j)$.  Additivity in the last
argument implies that this is compatible with $(b,\psi)$-pullbacks for
morphisms $\psi:\Gm{m,C'_0}^\rho \to \Gm{m,C'_0}^{\rho'}$.    

\mni
Having constructed the relative intersection torsor whenever $Q_0$ is
a split torus, now Lemma \ref{lem-tilde} and fpqc descent, Theorem
\ref{thm-descent}, 
\cite[Th\'{e}or\`{e}me 2, p. 190-19]{FGA},
extends the relative intersection
torsor to general torsors $Q_0$.  The cocycle condition for fpqc
descent is precisely compatibility with $(b,\psi)$-pullbacks.
\end{proof}

%%%%%%%%%%%%%%%%%%%%%%%%%%%%%%%%%%%%%%%%%%%%%%%%%%%%%%%%%%%%%%%
%%
%% Section: Degrees
%% 
%%%%%%%%%%%%%%%%%%%%%%%%%%%%%%%%%%%%%%%%%%%%%%%%%%%%%%%%%%%%%%%

\section{Degrees} \label{sec-degrees}
\marpar{sec-degrees}

\mni
The last aspect of Abel maps has to do with degrees of torsors.  For
this, we make a strong hypothesis that is satisfied in the case of
Abel maps.

\begin{hyp} \label{hyp-310} \marpar{hyp-310}
Let $T$ be an excellent, Noetherian scheme.  Let $\pi:C\to T$ be a
projective, smooth morphism of pure relative dimension $d$. 
\end{hyp}

\begin{defn} \label{defn-Lambda} \marpar{defn-Lambda}
For every finite, flat morphism $b:C_0\to C$ with
$\pi_0=\pi\circ b$ smooth, for every torus $Q_0$ on $C_0$, define
$\Lb(Q_0/C_0/T)$ to be the pushforward with respect to $\pi_0$ of
the cocharacter lattice $\text{Hom}_{C_0-\text{gr}}(\Gm{m,C_0},Q_0)$
(as an \'{e}tale sheaf).  This is contravariant in $C_0$, it is
covariant in $Q_0$, and it is compatible with \'{e}tale base change of
$T$.
\end{defn}

\mni
The \'{e}tale group scheme $\Lb(Q/C/T)$ is the natural target for
the degree map.  In particular, when $T$ is Spec of a finite field,
this degree is a logarithmic height, and the map to $\Lb(Q/C/T)$
is a ``multiheight'' as in \cite{BGS}, \cite{Gubler}.

\begin{defn} \label{defn-degree} \marpar{defn-degree}
A \emph{degree datum} for $\pi$ is a tuple
$$
(b:C_0\to C, Q_0, (\mcL_1,\dots,\mcL_{d-1},E))
$$
of a finite, flat morphism $b$ such that $\pi_0=\pi\circ
b$ is smooth, a torus $Q_0$ on $C_0$, a $(d-1)$-tuple of invertible
sheaves $(\mcL_1,\dots,\mcL_{d-1})$ on $C_0$, and a $Q_0$-torsor $E$
on $C_0$.  Isomorphisms and pullbacks are as in Definition
\ref{defn-Qintshf}. A \emph{relative degree map} for $\pi$ is an
assignment to every degree datum of a section
$\text{deg}_{\pi_0}(\mcL_1,\dots,\mcL_{d-1},E)$ of
$\Lb(Q_0/C_0/T)$ over $T_0$ that satisfies all of the following
axioms.   
\begin{enumerate}
\item[(0)] The assignment is invariant under isomorphism.
\item[(i)] The assignment is $\mf{S}_{d-1}$-invariant in
  $(\mcL_1,\dots,\mcL_{d-1})$.  
\item[(ii)] The assignment is additive in every argument of
  $(\mcL_1,\dots,\mcL_{d-1},E)$.  
\item[(iii)] The assignment is compatible with base change of $T$.
\item[(iv)] For every finite, flat morphism $g:C_1\to C_0$
  such that $\pi_1=\pi_0\circ g$ is smooth, for every $(d-1)$-tuple
  $(\mcL_0,\dots,\mcL_{d-1})$ of invertible sheaves on $C_0$, for
  every $Q_0\times_{C_0} C_1$-torsor $F$ on $C_1$, 
$$
\text{deg}_{\pi_1}(g^*\mcL_1,\dots,g^*\mcL_{d-1},F) \text{ equals }
g^*\text{deg}_{\pi_0}(\mcL_1,\dots, \mcL_{d-1},I_g(F]).
$$
\item[(v)] For every morphism of tori $\psi:Q_0\to Q'_0$,
$$
\text{deg}_{\pi_0}(\mcL_1,\dots,\mcL_{d-1},\psi_*E) \text{ equals }
\psi_*\text{deg}_{\pi_0}(\mcL_1,\dots, \mcL_{d-1},E).
$$
\item[(vi)] When $\text{pr}_0$ has connected geometric fibers, and
when $Q_0$ equals $\Gm{m,C_0}$, for the associated
  invertible sheaf $\mcL_d$ of $E$,
  $\text{deg}_{\pi_0}(\mcL_1,\dots,\mcL_{d-1},E)$ equals the virtual
  rank of $R\pi_{0,*}(\langle \mcL_1,\dots,\mcL_{d-1},\mcL_d \rangle)$.
\end{enumerate}
\end{defn}

\begin{prop} \label{prop-degree} \marpar{prop-degree}
Under Hypothesis \ref{hyp-310}, there exists a relative degree map.
The relative degree map is unique.  Denoting by $\Lb(Q/C/T)$ also the
\'{e}tale group scheme over $T$ whose \'{e}tale sheaf is as above,
the relative degree map defines a morphism of $T$-schemes,
$\Pic{}{C/T}\times_T \dots \times_T \Pic{}{C/T} \times_T |BQ_{C/T}|
\to \Lb(Q/C/T)$.
\end{prop}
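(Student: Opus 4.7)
The plan is to construct the relative degree map by reduction to the base case where $Q_0 = \Gm{m,C_0}$ and $\pi_0$ has geometrically connected fibers. In that case Axiom (vi) forces
\[
\deg_{\pi_0}(\mcL_1,\dots,\mcL_{d-1},E) = \text{rank}\lt(R\pi_{0,*}\langle \mcL_1,\dots,\mcL_{d-1},\mcL_d\rangle\rt),
\]
where $\mcL_d$ is the invertible sheaf associated to the $\Gm{m}$-torsor $E$. Since $\pi_0$ is proper and smooth of relative dimension $d$, this virtual rank is a locally constant, integer-valued function on $T$, and hence is a section of $\Lb(Q_0/C_0/T) = \ZZ$ over $T$. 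Additivity of $\deg_{\pi_0}$ in each argument follows from additivity of rank on distinguished triangles together with Corollary \ref{cor-cones}.

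I would then bootstrap in two stages. First, for a split torus $Q_0 = \Gm{m,C_0}^\rho$, a torsor is a $\rho$-tuple of invertible sheaves and Axiom (ii) forces the degree to be the vector of component degrees. To drop geometric connectedness of the fibers of $\pi_0$, use the Stein factorization $C_0 \to T'' \to T$: since $\pi_0$ is smooth and proper, $T''/T$ is finite \'{e}tale and $C_0/T''$ has geometrically connected fibers, so the base case applies on each connected component of $T''$ and produces a section of $\pi_{0,*}\ZZ^\rho = \Lb(Q_0/C_0/T)$. Second, for a general torus $Q_0$ on $C_0$, Lemma \ref{lem-tilde} provides a finite \'{e}tale surjection $b:\wt{C}_0\to C_0$ on which $b^*Q_0$ is split. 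Apply the previous step to the pullback $b^*E$ on $\wt{C}_0$ to obtain a section of $\Lb(b^*Q_0/\wt{C}_0/T)$, then descend it via finite \'{e}tale descent for the pushforward \'{e}tale sheaf $\Lb(Q_0/C_0/T)$.

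Verification of Axioms (0)--(v) reduces in each case to the split-torus, connected-fiber situation by Lemma \ref{lem-tilde} and the Stein factorization; the key nontrivial check is Axiom (iv). For a finite flat $g:C_1\to C_0$ with $\pi_1$ smooth, Axiom (iv) amounts, in the split case, to the projection formula
\[
R\pi_{1,*}(g^*\mcX) \cong R\pi_{0,*}(\mcX\otimes Rg_*\OO_{C_1})
\]
applied to $\mcX = \langle \mcL_1,\dots,\mcL_{d-1},\mcN\rangle$, combined with the identification of $I_g(F]$ in Corollary \ref{cor-MG2} as $\det Rg_*(\mcN)\otimes \det Rg_*(\OO_{C_1})^\vee$ via the $n=0$ case of Axiom (v) of Definition \ref{defn-intshf}, so that the ranks match. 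Uniqueness of the degree map follows formally from the axioms: (iii) and Lemma \ref{lem-tilde} reduce to split tori, (ii) to rank one, the Stein factorization to connected fibers, and (vi) then pins down the integer value.

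Because the construction depends only on isomorphism classes of the inputs (Axiom (0)) and is locally constant on $T$, it defines a morphism of $T$-schemes from $\Pic{}{C/T}\times_T \dots \times_T \Pic{}{C/T}\times_T |BQ_{C/T}|$ to the \'{e}tale group scheme $\Lb(Q/C/T)$. The main obstacle I anticipate is the \'{e}tale descent step for general tori: one must verify the cocycle condition on $\wt{C}_0\times_{C_0}\wt{C}_0$, which requires carefully matching Axiom (iii)'s compatibility with $(b,\psi)$-pullbacks against the functoriality of the pushforward sheaf $\Lb$ under the two pullbacks and the comparison isomorphism of the two trivializations of $Q_0$.
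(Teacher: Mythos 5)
Your overall architecture (reduce to a split torus via Lemma \ref{lem-tilde} and \'{e}tale descent, use the Stein factorization to get connected fibers, pin the value down by Axiom (vi)) is the same as the paper's. But the two substantive verifications are not justified by what you cite, and both fail for the same reason: you never invoke the one nontrivial K-theoretic input, namely that the virtual rank of $R\pi_{0,*}$ annihilates $\gamma^{d+1}$ (checked fiberwise by Riemann--Roch, since $\pi_0$ has relative dimension $d$). For Axiom (ii), the classes $\langle \mcL_1,\dots,\mcL_{d-1},\mcA\otimes\mcB\rangle$ and $\langle \mcL_1,\dots,\mcL_{d-1},\mcA\rangle + \langle \mcL_1,\dots,\mcL_{d-1},\mcB\rangle$ are \emph{not} equal in $K(C_0)$; by Remark \ref{rmk-gamma} the operation $\langle\cdot\rangle$ is multiadditive for tensor products only modulo $\gamma^{d+1}$. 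So ``additivity of rank on distinguished triangles'' gives nothing here (that only handles sums in the K-group), and Corollary \ref{cor-cones} does not apply either: it is a statement about the determinant $I_g=\det(Rg_*(-))$ under a regular-sequence hypothesis, whereas what you need is a statement about the \emph{rank} of $R\pi_{0,*}$ with no regularity available. The discrepancy lies in $\gamma^{d+1}$, and you must show its pushed-forward rank vanishes.

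The same gap reappears in your treatment of Axiom (iv). After the projection formula you are left with the class $\langle \mcL_1,\dots,\mcL_{d-1}\rangle\otimes\bigl(g_*[\mcM]-g_*[\OO_{C_1}]\bigr)$, and you need to replace the rank-zero virtual object $g_*[\mcM]-g_*[\OO_{C_1}]$ by $[I_g(\mcM]]-[\OO_{C_0}]$. These are different elements of $K(C_0)$; a rank-zero class $F-r[\OO]$ is congruent to $\det(F)-[\OO]$ only modulo $\gamma^2$. Since $\langle \mcL_1,\dots,\mcL_{d-1}\rangle\in\gamma^{d-1}$ and the gamma filtration is multiplicative, the two products differ by an element of $\gamma^{d+1}$, so ``the ranks match'' is exactly the assertion that the rank of $R\pi_{0,*}$ kills $\gamma^{d+1}$ --- which you have not established. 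Once you add that single Riemann--Roch input (the paper cites \cite[Corollary 8.10, Proposition 16.12]{ManinK}), both verifications go through as you sketch them and the rest of your reduction and descent argument is sound.
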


\begin{proof}
This is proved by the same method as in the proof of Corollary
\ref{cor-MG2}.  The definition of the degree in the 
general case reduces to the case that
$Q_0$ equals $\Gm{m,C}$.  
The main point is that for every element $a\in \gamma^{d+1}$, the
virtual rank of
$R\pi_*(a)$ is zero.  Since the virtual rank is locally constant, this
can be checked after base change to the residue field of a point of
$T$.  Now the claim follows by Riemann-Roch, cf. \cite[Corollary 8.10,
Proposition 16.12]{ManinK}.  
Modulo $\gamma^{d+1}$, the rule
$(\mcL_1,\dots,\mcL_{d-1},\mcL_d) \mapsto \langle
\mcL_1,\dots,\mcL_{d-1},\mcL_d\rangle$ is multiadditive.  Since
$R\pi_*(-)$ is also additive for the K-group, it follows that
$\text{deg}_\pi(\mcL_1,\dots,\mcL_{d-1},\mcL_d)$ is multiadditive.

\mni
For every $(d-1)$-tuple $(\mcL_1,\dots,\mcL_{d-1})$, 
$\langle \mcL_1,\dots,\mcL_{d-1} \rangle$ is in
$\gamma^{d-1}$.  For every virtual object $F$ in the K-ring having virtual
rank $r$ and determinant $\text{det}(F)$, the class $F-r[\OO_C]$ is congruent to
$\text{det}(F)-[\OO_C]$ modulo $\gamma^2$.  Since the gamma filtration
is multiplicative, the virtual rank of
$R\pi_*(-)$ on $\langle \mcL_1,\dots,\mcL_{d-1}\rangle \otimes (F-r[\OO_C])$
is the same as $\text{deg}_\pi(\mcL_1,\dots,\mcL_{d-1},\text{det}(F)
)$.  In particular, for $g:C_1\to C$ as above, for
an invertible sheaf $\mcM$ on $C_1$, the virtual object 
$g_*[\mcM] - g_*[\OO_{C_1}]$ has
virtual rank $0$ and determinant equal to $I_g(\mcM]$.  Thus, via the
projection formula,
$$
\text{deg}_{\pi\circ g}(g^*\mcL_1,\dots,g^*\mcL_{d-1},\mcM) =
R\pi_*(Rg_*(\langle g^*\mcL_1,\dots,g^*\mcL_{d-1}\rangle\otimes
([\mcM]-[\OO_{C_0}]))) = $$
$$
R\pi_*(\langle \mcL_1,\dots,
\mcL_{d-1}\rangle\otimes (g_*[\mcM]-g_*[\OO_{C_1}])) = \text{deg}_{\pi}(\mcL_1,\dots,\mcL_{d-1},I_g(\mcM]).
$$   
\end{proof}

\mni
Since $\pi$ is smooth, the geometric fibers are reduced, and the same
holds for all finite, \'{e}tale covers.  Thus, for every finite,
\'{e}tale morphism $C_0\to C$, the morphism $C_0\to T$ is
cohomologically flat in degree $0$.  Since the geometric fibers of
$\pi$ are smooth curves, they satisfy Serre's condition ${\sS}_2$.  Thus,
the hypothesis above implies Hypothesis \ref{hyp-etalecohfl}.

\begin{defn} \label{defn-curvemaps} \marpar{defn-curvemaps}
For every algebraic stack $\mcX$ over $C$, 
define $\textit{CurveMaps}(\mcX/T)$ to be the $T$-stack whose objects are
commutative diagrams
$$
\begin{CD}
Y_0 @> g_0 >> C \\
@V p VV @VV \pi V \\
T_0 @>> h > T
\end{CD}
$$
together with a $1$-morphism over $C$, $\zeta:Y_0 \to \mcX$.
Here $h:T_0\to T$ is a morphism of schemes, and $p$ is a flat, proper
morphism of relative dimension $\leq 1$.  When $\mcX$ is the
classifying stack $BQ$, the $T$-stack above is denoted
$\textit{CurveMaps}(BQ/T)$.  
\end{defn}

\mni
By Lemma \ref{lem-310},
$g_0:Y_0 \to T_0\times_T C$ is perfect and proper.  Every flat, proper
morphism of relative dimension $\leq 1$ is projective fppf locally
over $T_0$, cf. \cite[Proposition 3.3]{dJHS}.  Thus, by Corollary
\ref{cor-MG2}, there is an intersection torsor $I_{g_0}(E]$ that is a
section of $|BQ_{C/T}|$ over $T_0$.

\begin{prop} \label{prop-310} \marpar{prop-310}
The $T_0$-section $I_{g_0}(E]$ of $|BQ_{C/T}|$ defines a $1$-morphism
$\textit{CurveMaps}(BQ/T)\to |BQ_{C/T}|.$  When the diagram above is a
family of stable sections over $C$, this $1$-morphism agrees with the
$1$-morphisms constructed in \cite{dJHS} and \cite{Zhu}.
\end{prop}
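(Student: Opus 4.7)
The existence of the $T_0$-section $I_{g_0}(E]$ for each object of $\textit{CurveMaps}(BQ/T)$ is essentially established in the paragraph preceding the proposition: Lemma \ref{lem-310} ensures that $g_0:Y_0 \to T_0 \times_T C$ is proper and perfect, the cited \cite[Proposition 3.3]{dJHS} ensures that $p$ is projective fppf locally over $T_0$, and so Corollary \ref{cor-MG2} applies with $n=0$ in the sense of Definition \ref{defn-Qintshf}, taking $E$ to be the $Q\times_C Y_0$-torsor corresponding to the $1$-morphism $\zeta:Y_0 \to BQ$ over $C$. The plan is therefore to first verify the two compatibilities that upgrade this object-level assignment to a $1$-morphism of $T$-stacks, and then to invoke the uniqueness clause of Corollary \ref{cor-MG2} to identify the result with the Abel-map constructions of \cite{dJHS} and \cite{Zhu}.

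Because $|BQ_{C/T}|$ is an algebraic space rather than a genuine stack, its $2$-categorical data is trivial, so it suffices to check that the assignment $(T_0, p, g_0, E)\mapsto I_{g_0}(E]$ respects isomorphisms of data and is compatible with base change along morphisms $T_1 \to T_0$ of $T$-schemes. Invariance under isomorphism of the tuple is axiom (0) of Definition \ref{defn-Qintshf}, and compatibility with base change is axiom (iii). Together these encode precisely the data of a morphism from the stack $\textit{CurveMaps}(BQ/T)$ to the sheaf represented by $|BQ_{C/T}|$.

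For the comparison statement on families of stable sections, I would argue that the constructions of \cite{dJHS} and \cite{Zhu}, being built from the det and Div functors of \cite{detdiv}, satisfy each of the axioms of Definition \ref{defn-Qintshf}. Axiom (v) in the base case $n=0$, $Q_0=\Gm{m,C_0}$ is a direct translation of the defining det-Div formula, axiom (iii) follows from the compatibility of det and Div with base change, axiom (ii) is the standard additivity of det and Div for short exact sequences of perfect complexes, and axiom (i) is the symmetry proved by the Koszul arguments of Section \ref{sec-regseq}. Since both the construction $(g_0,E)\mapsto I_{g_0}(E]$ and the Abel maps of \cite{dJHS}, \cite{Zhu} are $1$-morphisms into the algebraic space $|BQ_{C/T}|$ satisfying these axioms, the uniqueness part of Corollary \ref{cor-MG2} forces them to coincide.

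The main technical obstacle I anticipate is the verification of axiom (iv) for the Abel maps of \cite{dJHS} and \cite{Zhu}: this is the boundary-compatibility highlighted in the introduction as the key feature of the det-Div construction, and it is in effect the content of the boundary-correspondence formulas developed in those papers. Once axiom (iv) is verified from their constructions, the uniqueness argument in the previous paragraph closes the proof without further calculation.
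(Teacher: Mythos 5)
Your first paragraph matches the paper's argument exactly: the paper also deduces the $1$-morphism from functoriality of $I_{g_0}(E]$ for pullback in $T_0$ and for isomorphisms of torsors, i.e., axioms (0) and (iii) of Definition \ref{defn-Qintshf}, together with the fact that the target is an algebraic space. Where you diverge is in the comparison with \cite{dJHS} and \cite{Zhu}: you propose to verify that their Abel maps satisfy all the axioms of Definition \ref{defn-Qintshf} and then invoke the uniqueness clause of Corollary \ref{cor-MG2}, whereas the paper simply observes that the \emph{construction} of the intersection torsor via det and Div is literally the construction used in those papers, so the two $1$-morphisms agree by inspection rather than by an axiomatic characterization. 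Your route is logically sound, but the ``main technical obstacle'' you anticipate --- verifying axiom (iv) for the Abel maps of \cite{dJHS} and \cite{Zhu} --- is exactly the work that the paper's observation makes unnecessary: since those Abel maps are instances of $\text{det}(Rg_*(-))$ applied to the virtual class $\langle\,\cdot\,\rangle$, and Proposition \ref{prop-intersect} has already shown that this det-Div construction satisfies the axioms, there is nothing left to check. The uniqueness argument does buy something in principle (it would identify the maps even if the other constructions were presented in a superficially different form), but here it trades a one-line identification for a re-verification of axioms that the paper has already carried out for the very same construction.
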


\begin{proof}
By the axioms, the intersection torsor
is functorial for pullback in $T_0$ and
for isomorphism of $Q$-torsors over $Y_0$.  Thus, this is a
$1$-morphism.  The \emph{construction} of an intersection torsor via
det and Div is precisely how the Abel maps are constructed 
in \cite{dJHS} and \cite{Zhu}.
\end{proof}

\begin{hyp} \label{hyp-char0} \marpar{hyp-char0}
Let $T$ be an excellent, Noetherian scheme.  Let
$\pi:C\to T$ be a projective, smooth morphism of pure relative
dimension $1$, i.e., a family of projective, smooth curves over $T$
(not necessarily geometrically connected).
\end{hyp}

\mni
Denote by $T'\to T$ the finite part of the Stein factorization of
$\pi$.  Since $\pi$ is smooth, $T'/T$ is finite and \'{e}tale.
Since $\pi$ has relative dimension $1$, the relative degree map
defines a morpism of $T$-schemes,
$$
\text{deg}_{Q/C/T}: |BQ_{C/T}| \to \Lb(Q/C/T).
$$

\begin{prop} \label{prop-degree2} \marpar{prop-degree2}
Assuming Hypothesis \ref{hyp-char0}, resp. assuming both Hypothesis
\ref{hyp-char0} and that $\OO_T(T)$ contains a characteristic
$0$ field, 
the stack $BQ_{C/T}$ is smooth and
the coarse moduli space $|BQ_{C/T}|$ is fppf, resp. $BQ_{C/T}$ is
smooth and
the coarse moduli
space is quasi-compact and smooth.
Also the degree morphism is fppf, resp. the morphism is surjective and
smooth.  

\mni
For each field $K$ and $K$-valued point 
$e\in \Lb(Q/C/T)(K)$, for $b:C_0\to C$ as in Lemma \ref{lem-tilde},
for every $K$-point $E$ of
$(BQ_0)_{C_0/T}$ with degree $b^*e\in \Lb(Q_0/C_0/T)(K)$, the norm
$I_b(E]$ is a $K$-point of $\Lb(Q/C/T)$ with degree $e$.  Thus
$\text{deg}_{Q/C/T}(K)$ is surjective if $C_0$ has a $K$-point.

\mni
The degree morphism is a torsor for the kernel 
$|BQ_{C/T}^\tau|$.  Locally on $T$ for the fppf topology,
the kernel group scheme 
admits a
finite morphism to an Abelian scheme.  Thus $|BQ_{C/T}^\tau|$ is a
proper, fppf group scheme, resp. it is
an extension of a (commutative) finite,
\'{e}tale 
group scheme over $T$ by an Abelian scheme over $T$.  
\end{prop}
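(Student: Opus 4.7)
The plan is to reduce every assertion to well-known facts about the relative Picard scheme of a smooth projective curve family. By Lemma \ref{lem-tilde}, I fix a finite, \'{e}tale, surjective cover $b:C_0\to C$ with $Q_0 := Q\times_C C_0 \cong \Gm{m,C_0}^\rho$.

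Smoothness of $BQ_{C/T}$ follows from deformation theory: at any point corresponding to a $Q$-torsor $E$ on a curve fiber $C_t$, infinitesimal deformations are classified by $H^1(C_t, \text{Lie}(Q)_E)$ and obstructions lie in $H^2(C_t, \text{Lie}(Q)_E)$, which vanishes because $C_t$ is a smooth projective curve. Combined with the locally finitely presented algebraicity of Proposition \ref{prop-repcohfl}, this proves smoothness. For the coarse moduli, I would invoke the gerbe structure from Proposition \ref{prop-repcohfl}: fppf locally on $T$, the morphism $BQ_{C/T}\to |BQ_{C/T}|$ is a gerbe for a commutative fppf group scheme $R$ built from factors of $\Gm{m,T}$ and $\mun{\ell}$. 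Since gerbes are fppf, flatness and finite presentation of $BQ_{C/T}\to T$ descend through the gerbe to give the fppf claim for $|BQ_{C/T}|\to T$. In characteristic zero each $\mun{\ell}$ is \'{e}tale, so $R$ is smooth, the gerbe is smooth, and smoothness of $|BQ_{C/T}|$ follows by smooth descent.

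For the degree morphism I reduce to the split case. For $Q_0 \cong \Gm{m,C_0}^\rho$, the coarse moduli is essentially $\Pic{}{C_0/T}^\rho$ (the ``essentially'' accounting for Weil restriction from the Stein factor $T'$), and the degree is the classical multidegree $\Pic{}{C_0/T}\to \Lb(\Gm{m,C_0}/C_0/T)$. Fppf locally on $T$, a section $T'\to C_0$ produces line bundles of any prescribed multidegree, giving surjectivity; smoothness of the relative Picard scheme of a smooth projective family of curves is classical, as is smoothness of the degree map (translation by a fixed torsor of each degree identifies each multidegree component with $\Pic{0}{C_0/T}^\rho$). The general case follows by applying the norm morphism $I_b$ along the finite \'{e}tale cover $b$ and using axiom (iv) of Proposition \ref{prop-degree}. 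The claim about surjectivity on $K$-points when $C_0$ has a $K$-point follows by picking a $Q_0$-torsor with prescribed degree $b^*e$ built from powers of a section-divisor and norming down to $Q$.

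Finally, for the torsor structure and the kernel $|BQ^\tau_{C/T}|$: translation by a locally chosen torsor of prescribed degree trivializes the degree map fppf locally on $\Lb(Q/C/T)$, exhibiting it as a torsor for the kernel. In the split case over $C_0$, the kernel is $\Pic{0}{C_0/T}^\rho$, a product of relative Jacobians, hence an abelian scheme over $T'$. For general $Q$, the descent datum furnished by Lemma \ref{lem-tilde} produces a natural morphism from $|BQ^\tau_{C/T}|$ to (a Weil restriction of) $\Pic{0}{C_0/T}^\rho$; this morphism is finite, with kernel a finite flat commutative group scheme arising from the cocharacter-lattice descent datum, so $|BQ^\tau_{C/T}|$ is proper and fppf in general. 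In characteristic zero this finite kernel is \'{e}tale, making $|BQ^\tau_{C/T}|$ an extension of a commutative finite \'{e}tale group scheme by an abelian scheme. The main obstacle will be controlling the finite flat subgroup schemes of $\mun{p}$-type in positive characteristic: one must track flatness (which descends along fppf morphisms) in place of smoothness throughout both the gerbe argument and the kernel analysis, and the characteristic-zero hypothesis is precisely what removes the infinitesimal factors and upgrades each conclusion to its smooth counterpart.
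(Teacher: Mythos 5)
Your overall route coincides with the paper's: smoothness of $BQ_{C/T}$ by the vanishing of $H^2$ on a curve, descent of flatness/smoothness through the gerbe of Proposition \ref{prop-repcohfl}, reduction of surjectivity to the split case over a trivializing cover $b:C_0\to C$ via the norm $I_b(-]$ and axiom (iv) of the degree map, and identification of the kernel in the split case with $(\Pic{0}{C_0/T'})^\rho$. Those parts are fine.

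There is, however, one genuine gap in your kernel analysis. You assert that the natural morphism from $|BQ^\tau_{C/T}|$ to (a Weil restriction of) $(\Pic{0}{C_0/T'})^\rho$ is finite, ``with kernel a finite flat commutative group scheme arising from the cocharacter-lattice descent datum.'' That finiteness is exactly the nontrivial point, and the descent datum alone does not give it. The kernel of $b^*:|BQ_{C/T}|\to (\Pic{}{C_0/T'})^\rho$ classifies $Q$-torsors on $C$ trivialized on $C_0$; by the fppf cohomology sequence it is a quotient $M$ of the pushforward of sections of tori along the covering, and a priori such a quotient is (fppf locally) a product of a \emph{torus} and a finite flat group scheme --- nothing in the descent datum rules out a positive-dimensional factor, and without ruling it out you cannot conclude that $|BQ^\tau_{C/T}|$ is proper. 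The paper kills the torus factor by the norm identity: for any torsor $E$, $I_b(b^*E]$ is the $m$-th multiple of $E$, where $m=\deg(b)$, so the kernel $M$ of $b^*$ is annihilated by $m$; a group scheme that is fppf locally a product of a torus and a finite flat group scheme and is killed by $m$ has trivial torus part, hence is finite and flat (\'{e}tale locally a product of $\mun{\ell}$'s). You need this step (or some substitute) before you can conclude that $|BQ^\tau_{C/T}|$ admits a finite morphism to an Abelian scheme and is therefore proper and fppf, resp.\ an extension of a finite \'{e}tale group scheme by an Abelian scheme in characteristic $0$.
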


\begin{proof}
Since $\Lb(Q/C/T)$ is \'{e}tale over $T$, to prove that
$\text{deg}_{Q/C/T}$ is flat and finitely presented, resp. smooth,
it is equivalent to prove that
$|BQ_{C/T}|$ is flat and finitely presented over $T$, resp. smooth
over $T$.  This can be checked fppf locally on $T$.  By Proposition
\ref{prop-repcohfl}, after fppf base change of $T$, the stack
$BQ_{C/T}$ is a gerbe over $|BQ_{C/T}|$ for a commutative, fppf group
scheme $R$ that is the kernel of a morphism of tori.  In
characteristic $0$, $R$ is a smooth group scheme.  Thus, to prove that
$|BQ_{C/T}|$ is fppf, resp. to prove that $|BQ_{C/T}|$ is smooth in
characteristic $0$, it suffices to prove that $BQ_{C/T}$ is smooth
over $T$.  

\mni
The infinitesimal deformation theory of $BQ_{C/T}$, and
more general ``restriction of scalars'' stacks, is given in
\cite[Section 5.7]{OHom}.  The pullback by the identity section of the
relative sheaf of differentials of $Q/C$ is a locall free sheaf
$\mf{q}^\dagger$ over $C$.  The cotangent complex of $BQ\to C$ is
isomorphic to the pullback of the complex concentrated in
cohomological degree $+1$, i.e., 
$\mf{q}^\dagger[-1]$.  This is a pullback because the group scheme $Q$ is
commutative; for a non-commutative group scheme $G$, twist by the universal
$G$-torsor and the adjoint action on the sheaf of left-invariant
differential $\mf{g}^\dagger$.  For an Artinian scheme $\SP A'$, for
a quotient by a square-zero ideal 
$$
0 \to I \to A'\to A \to 0,
$$ 
for a morphism $\SP A\to
T$, for a $Q$-torsor $E_A$ over $C_A=C\times_T \SP A$, the obstruction
to deforming $E_A$ is an element in $H^2(C_A,\mf{q}\otimes_A I)$, and
this vanishes because $C_A$ has dimension $1$.  Thus, $BQ_{C/T}$ is
smooth over $T$.

\mni
Therefore the kernel $|BQ^\tau_{C/T}|$ of the degree morphism is an
fppf group scheme, resp. a smooth group scheme in characteristic $0$.
Because the degree morphism is flat and finitely presented,
resp. smooth in characteristic $0$, this morphism of group schemes is
a torsor under the kernel assuming that the morphism is surjective.

\mni
To prove surjectivity in the general case, it suffices to prove
surjectivity in case $T$ equals $\SP K$ for a field $K$.  To prove
surjectivity of points (not surjectivity of the induced map of
$K$-points), it suffices to check after base change to a larger field.
Thus, consider the special case that there exists a finite, \'{e}tale,
surjective morphism $b:\Cc_0\to \Cc$ and an
isomorphism $\psi:Q_0\to \Gm{m,\Cc_{0}}^\rho$.  Assume, moreover, that
for the Stein factorization $T'=\SP \OO_C(C)$, also $\Cc_0$ is
geometrically integral over $T'$.  Finally, assume that there exists a
$T'$-point $s_0$ of $\Cc_0$.
By Lemma \ref{lem-tilde} this holds after a finite field extension of
$K$.
By Proposition \ref{prop-repcohfl}, under these hypotheses
the stacks are split gerbes over the coarse moduli spaces, so they
have the same (isomorphism classes of) $K$-points.
For every $K$-point $e$ of $\Lb(Q/C/T)$ and for every $K$-point $E_0$ of
$\Lb(Q_0/C_0/T)$ of degree $b^*e$, by Definition
\ref{defn-degree}(iv) and Proposition \ref{prop-degree}, the norm
$E=I_b(E_0]$ has degree $e$.  Thus, to prove that $\text{deg}_{Q/C/T}$
is (geometrically) surjective, it suffices to prove the same for
$\text{deg}_{Q_0/C_0/T}$.  
Via $\psi$, the degree morphism is 
$$
|(B\Gm{m,C_0})_{C_0/T}^\rho| \to \Lb(\Gm{m,C_0}^\rho/C_0/T).
$$
This is the $T'/T$-Weil restriction of the associated morphism of
$T'$-schemes, 
$$
(\Pic{}{C_0/T'})^\rho \to \ZZ^\rho.
$$
Since the degree of $\OO_{C_0}(\ul{s_0})$ equals $1$, this degree
morphism is surjective, and even $\text{deg}_{Q_0/C_0/T}(T)$ is
surjective.  Thus also $\text{deg}_{Q/C/T}(T)$ is surjective on
$T$-points.  Since this holds for all sufficiently large field
extensions, 
the degree morphism is surjective on points (on geometric points
always, and on rational points assuming that $\Cc_0$ has a
$T'$-point).
Note, moreover, that
the kernel $(\Pic{0}{\Cc_{0}/T'})^\rho$ of the degree morphism
is an Abelian scheme over $T'$.  Since $T'\to T$ is finite and
\'{e}tale,
the $T'/T$-Weil restriction is also
an Abelian scheme over $T$.  

\mni
Denote by $R$ the quotient torus, $R = b_*Q_0/Q$, for the unramified morphism
$Q\to b_*Q_0$ of tori
on $C$.   
By the same proof as in Proposition \ref{prop-repcohfl}, $\pi_*R$ is an
fppf group scheme on $T$ that is, fppf locally, a product of a torus
and a commutative, finite, flat group scheme.  The quotient $M$ by
$\pi_*b_*Q_0 \cong \Gm{m,T}^\rho$ is also such an fppf group scheme,
cf. Corollary \ref{cor-psi}.
By the long exact sequence of fppf cohomology, there is an exact
sequence of fppf group schemes,
$$
0 \to M \to |BQ_{C/T}| \xrightarrow{b^*} 
(\Pic{}{\Cc_0/T})^\rho \xrightarrow{u} |BR_{C/T}|.
$$
The scheme $M$ is affine, and every locally closed subscheme of
$(\Pic{}{\Cc_0/T})^\rho$ is separated.  Thus, $|BQ_{C/T}|$ is
separated.  Then the same argument also implies that $|BR_{C/T}|$ is
separated.  So the kernel of $u$ is a closed subgroup scheme.  
Altogether, $|BQ_{C/T}|$ is an $M$-torsor over the
closed image of $b^*$.  

\mni
Since $M$ is a product of a torus and a finite, flat
group scheme, in order to prove that $M$ is a finite, flat group scheme, 
it suffices to prove that $M$ has finite order $m$.
Denote by $m$ the degree of the finite, \'{e}tale morphism $b$.
For a torsor $E$ on $\Cc_K$, 
$I_b(b^*E)$ is multiplication of $E$ by $m$.  Since $M$ equals the
kernel of $b^*$, $M$ has order $m$.
Therefore $M$ is a commutative, finite, flat group scheme that is
\'{e}tale locally 
a product of group schemes $\mu_\ell$ over $T$.

\mni
Also, since
$\Lb(Q_0/\Cc_{0}/T)$ is torsion-free, the degree of $b^*E$
equals $0$ if and only if the degree of $E$ equals $0$.  Thus, the
image under $b^*$ of $|BQ^\tau_{\Cc/T}|$ is the intersection of the
closed image of $b^*$ with $|(BQ_0)^\tau_{\Cc_{0}/T}| =
(\Pic{0}{\Cc_{K0}/T})^\rho$.  
This intersection is a 
closed subgroup scheme of an
Abelian scheme over $T$.  Altogether, $|BQ^\tau_{\Cc/T}|$ is a
commutative, 
proper group scheme over $T$ that is fppf.  In characteristic
$0$, such a group scheme is a direct product of an Abelian scheme and
a commut
is an extension of a finite, \'{e}tale group scheme by
an Abelian variety.  In characteristic $0$, such a group scheme is an
extension of a 
finite, \'{e}tale group scheme by  
an Abelian scheme (the Abelian scheme is the connected component of
the identity).  Of course over geometric points of $T$, this extension
is split by Chevalley's structure theorem.
\end{proof}

\begin{prop} \label{prop-degree3} \marpar{prop-degree3} 
With hypotheses as in Proposition \ref{prop-degree2}, assume that $T$
equals $\SP K$ for $K$ a field, and assume that $C\to T'$ is a geometrically
integral curve of genus $g(C)\geq 2$.  The exponent of the torsion group
$\Lb(Q/C/T)(K)/|BQ_{C/T}|(K)$ divides the degree $[K_0:K']$ of
every residue field $K_0$ of every closed point for every finite,
\'{e}tale, surjective morphism $b:C_0\to C$ such that $b^*Q$ is
isomorphic to $\Gm{m,C_0}^\rho$.
\end{prop}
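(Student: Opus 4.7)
The plan is to exploit the finite \'etale cover $b\colon C_0 \to C$ to reduce to the split-torus case, where degrees of line bundles can be realized using effective divisors coming from closed points. Without loss of generality, I would replace $b$ by its restriction to a single connected component of $C_0$: since $b$ is both finite (hence closed) and \'etale (hence open) and $C$ is connected, any nonempty connected component of $C_0$ surjects onto $C$, and the hypothesis that $b^*Q$ is split is inherited. Thus we may assume $C_0$ is connected, geometrically integral over its field of constants $K_0' = H^0(C_0,\OO_{C_0})$. Since $b$ is surjective, $K' \hookrightarrow K_0' \subseteq K_0$, and consequently $[K_0:K_0']$ divides $[K_0:K']$.

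First I would establish that $b^*\colon \Lb(Q/C/T) \to \Lb(Q_0/C_0/T)$ is injective on \'etale sheaves, and hence injective on $K$-points. The adjunction unit
\[
\mathrm{Hom}_{C\text{-gp}}(\Gm{m,C},Q) \longrightarrow b_*b^*\mathrm{Hom}_{C\text{-gp}}(\Gm{m,C},Q)
\]
is split (up to multiplication by $\deg b$) by the trace map because $b$ is finite \'etale. Since the source is locally constant with values in free abelian groups, this unit is injective; applying the left exact functor $\pi_*$ then gives injectivity of $b^*$.

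Next, using $b^*Q \cong \Gm{m,C_0}^\rho$, I identify $|BQ_0|_{C_0/T} \cong (\Pic{}{C_0/T})^\rho$ and $\Lb(Q_0/C_0/T)(K) \cong \ZZ^\rho$ via the common degree of line bundles on the (transitively permuted) Galois-orbit of geometric components of $C_0\otimes_K\bar K$. The line bundle $\OO_{C_0}(P)$ is a $K$-rational point of $\Pic{}{C_0/T}$: over $\bar K$ the point $P$ splits into $[K_0:K]$ geometric points, equidistributed among the $[K_0':K]$ components by Galois transitivity, so the common degree per component is $[K_0:K_0']$; thus $\deg\OO_{C_0}(P) = [K_0:K_0'] \in \ZZ$. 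Given $e \in \Lb(Q/C/T)(K)$ with $b^*e = (n_1,\dots,n_\rho)$, I realize $[K_0:K']\cdot b^*e$ as the degree of the explicit $K$-rational tuple
\[
F = \bigl(\OO_{C_0}(k_1 P),\dots,\OO_{C_0}(k_\rho P)\bigr), \qquad k_i = \frac{[K_0:K']\, n_i}{[K_0:K_0']},
\]
where $[K_0:K_0'] \mid [K_0:K']$ guarantees $k_i \in \ZZ$.

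Finally, Corollary \ref{cor-MG2} produces the intersection torsor $E := I_b(F]$, a $K$-point of $|BQ_{C/T}|$, and Definition \ref{defn-degree}(iv), applied with $d = 1$ and empty sequence of $\mcL$'s, gives
\[
b^*\bigl(\deg_{Q/C/T}(E)\bigr) = \deg_{Q_0/C_0/T}(F) = [K_0:K']\cdot b^*e = b^*\bigl([K_0:K']\cdot e\bigr).
\]
Injectivity of $b^*$ then yields $\deg_{Q/C/T}(E) = [K_0:K']\cdot e$, so $[K_0:K']$ annihilates the cokernel of $\deg_{Q/C/T}(K)$, establishing the claim. The main obstacle is the bookkeeping in the third paragraph: one must carefully identify $\Lb(Q_0/C_0/T)(K)$ with $\ZZ^\rho$ via Galois-invariant common degrees, and verify the equidistribution of geometric preimages of $P$ across the Galois-orbit of components.
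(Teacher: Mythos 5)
Your proposal is correct, but it follows a genuinely different route from the paper's. The paper base-changes the entire situation from $K$ to $K_0$: over $K_0$ the trivializing cover acquires a rational point, so Proposition \ref{prop-degree2} gives surjectivity of $\text{deg}_{Q_{K_0}/C_{K_0}/K_0}$ on $K_0$-points, and one then descends by the norm along $\SP K_0\to \SP K$, which multiplies degrees by the field degree (the paper's argument as written produces the factor $[K_0:K]$). You instead stay over $K$ and work ``horizontally'': you realize the needed multidegree explicitly by the line bundles $\OO_{C_0}(k_iP)$ supported at the closed point $P$, push down by the norm $I_b(-]$ along the geometric cover $b:C_0\to C$ via Definition \ref{defn-degree}(iv), and conclude using injectivity of $b^*$ on the cocharacter lattices (your trace argument for that injectivity is fine). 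In effect you are rerunning the surjectivity argument inside the proof of Proposition \ref{prop-degree2} with the rational section $s_0$ replaced by a closed point of degree $[K_0:K_0']$ per geometric component. This buys two things: the argument is self-contained rather than leaning on Proposition \ref{prop-degree2} as a black box over the auxiliary field $K_0$, and it actually yields the sharper divisibility by $[K_0:K_0']$ (you only invoke the weaker consequence $[K_0:K']$, which is what the statement asserts and which matches the statement more precisely than the paper's $[K_0:K]$). Two small points of care: the phrase ``splits into $[K_0:K]$ geometric points'' should be ``has length $[K_0:K]$ as a finite $\bar K$-scheme,'' since the residue field of a closed point of a smooth curve over an imperfect field need not be separable over $K$ (the Galois-equidistribution of lengths across components, hence the per-component degree $[K_0:K_0']$, is unaffected); and the identification of $\Lb(Q_0/C_0/T)(K)$ with $\ZZ^\rho$ via common per-component degrees, which you rightly flag as the bookkeeping burden, is exactly the identification the paper itself uses in Proposition \ref{prop-degree2}, so it is consistent with the axioms in Definition \ref{defn-degree}.
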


\begin{proof}
For each such field extension $K_0/K$, replacing $C_0$ be the
irreducible component that contains the $K_0$-point, then $C_0$ is a
geometrically integral $K_0$-curve with a $K_0$-point.  Thus, by
Proposition \ref{prop-degree2}, $\text{deg}_{Q/C/K}(K_0)$ is
surjective, i.e., $\text{deg}_{Q_{K_0}/C_{K_0}/K_0}(K_0)$ is
surjective.  Since $\text{deg}_{Q_{K_0}/C_{K_0}/K}$ is the
$K_0/K$-Weil restriction of $\text{deg}_{Q_{K_0}/C_{K_0}/K_0}$, for
every $e\in \Lb(Q/C/K)(K)$, the multiple $[K_0:K]e$ is the Weil
restriction of $e\otimes_K K_0$.  Thus, for $E_0$ a $K_0$-point of
$BQ_{C/K}$ whose $Q_{K_0}/C_{K_0}/K_0$-degree equals $e\otimes_K K_0$,
the $Q/C/K$-degree of the norm $I_{K_0/K}(E_0)$ equals $[K_0:K]e$.  So
the order of $\ol{e}$ in the cokernel of $\text{deg}_{Q/C/K}(K)$
divides $[K_0:K]$.
\end{proof}

\mni
Finally, in order to make sense of the asymptotics of quantities
depending on the degree, there is a notion of a ``positive
structure'', i.e., a monoid of
positive degrees.

\begin{defn} \label{defn-pos} \marpar{defn-pos}
A \emph{positive structure}
on $\Lb(Q/C/T)$ is an open and closed
subscheme $\Lb^+(Q/C/T)$ that is a submonoid scheme, that
generates a finite index subgroup scheme of $\Lb(Q/C/T)$,
and that is \emph{sharp}, i.e., it 
contains no subgroup scheme other than the
trivial group scheme.  The positive structure is \emph{saturated},
resp. \emph{finitely generated}, if its fibers over geometric points
of $T$ are saturated (for multiplication by positive integers),
resp. finitely generated (as a commutative monoid).
\end{defn}

\begin{lem} \label{lem-sat} \marpar{lem-sat}
Every saturated positive structure generates $\Lb(Q/C/T)$.  
\end{lem}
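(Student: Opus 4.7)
The plan is to reduce the lemma to a purely algebraic claim about a sharp saturated submonoid of a finitely generated free abelian group, and then verify that claim by a one-line manipulation. Let $\Lb'\subset \Lb(Q/C/T)$ denote the étale subgroup scheme generated by $\Lb^+(Q/C/T)$; by the definition of a positive structure, $\Lb'$ has finite index in $\Lb(Q/C/T)$. The inclusion $\Lb'\hookrightarrow \Lb(Q/C/T)$ is a monomorphism of étale $T$-group schemes, hence an open immersion, so $\Lb'=\Lb(Q/C/T)$ is equivalent to the equality of geometric fibers over every $\bar t\to T$. In this way the statement reduces to a fiberwise claim.

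Fix $\bar t\to T$ and write $\Lambda_{\bar t}$ for the stalk, a finitely generated free abelian group (being a pushforward of cocharacter lattices of tori). Set $M=\Lb^+_{\bar t}$ and let $L=M-M=\Lb'_{\bar t}$. By hypothesis $L$ has some finite index $n\geq 1$ in $\Lambda_{\bar t}$, and $M$ is saturated: for $x\in \Lambda_{\bar t}$, if some positive multiple of $x$ lies in $M$, then $x$ itself does. I intend to show $L=\Lambda_{\bar t}$, which by the preceding reduction finishes the lemma.

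For this, given $a\in \Lambda_{\bar t}$, choose $m_1,m_2\in M$ with $na=m_1-m_2$ (possible since $n$ is the index of $L$). Then
\[
n(a+m_2)=na+n m_2=m_1+(n-1)m_2,
\]
and the right-hand side lies in $M$ as a sum of elements of $M$. Saturation forces $a+m_2\in M$, hence $a=(a+m_2)-m_2\in L$, as desired. I expect this tiny monoid computation to be the only substantive step; the remaining work is the formal passage between equality of étale subgroup schemes and equality of all geometric stalks, and neither sharpness nor finite generation of $M$ is needed.
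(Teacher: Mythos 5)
Your proposal is correct and follows essentially the same route as the paper: reduce to geometric fibers, write $na=m_1-m_2$ with $m_1,m_2$ in the positive submonoid, observe $n(a+m_2)=m_1+(n-1)m_2$ lies in the submonoid, and invoke saturation to conclude $a+m_2$ (hence $a$) lies in the generated subgroup. Your added justification of the fiberwise reduction and the observation that sharpness is not needed are both consistent with the paper's argument.
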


\begin{proof}
This can be checked after base change to geometric points of $T$.  
By hypothesis, for every $e\in \Lb(Q/C/T)$,
there exists an integer $m>0$ such that $m\cdot e$ is contained in the
subgroup generated by $\Lb^+(Q/C/T)$.  Every nonnegative linear
combination of elements of $\Lb^+(Q/C/T)$ is an element of
$\Lb^+(Q/C/T)$.  Thus there exists $e_+,e_- \in \Lb^+(Q/C/T)$
such that $m\cdot e$ equals $e_+-e_-$.  Thus $m\cdot e + e_-$ is in
$\Lb^+(Q/C/T)$.  Since $m\geq 0$ and since $e_-$ is in
$\Lb^+(Q/C/T)$, 
also $(m-1)\cdot e_-$ is in $\Lb^+(Q/C/T)$.  Thus $m\cdot e + e_-
+ (m-1)\cdot e_-$ is in $\Lb^+(Q/C/T)$, i.e., $m\cdot (e+e_-)$ is
in $\Lb^+(Q/C/T)$.  Since this submonoid is saturated, $e+e_-$ is
in $\Lb^+(Q/C/T)$.  Therefore $e=(e+e_-) - e_-$ is in the subgroup
generated by $\Lb^+(Q/C/T)$.  
\end{proof}

\begin{lem} \label{lem-pos} \marpar{lem-pos}
Let $b:C_0\to C$ and $\psi:Q_0\to \Gm{m,C_0}^\rho$ 
be a trivializing finite, \'{e}tale cover as above.  Assume, moreover,
that $b$ is Galois with Galois group $\Gamma$.  For every
$\Gamma$-stabilized positive structure on $\Lb(Q_0/C_0/T)$, the
intersection of $\Lb^+(Q_0/C_0/T)$ with $\Lb(Q/C/T)$ is a
positive structure $\Lb^+(Q/C/T)$ 
on $\Lb(Q/C/T)$.  If $\Lb^+(Q_0/C_0/T)$ is saturated,
resp. finitely generated, then also $\Lb^+(Q/C/T)$ is saturated,
resp. finitely generated.
\end{lem}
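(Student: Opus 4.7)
The plan is to identify $\Lb(Q/C/T)$ with the $\Gamma$-fixed subsheaf of $\Lb(Q_0/C_0/T)$ and then deduce each property of $\Lb^+(Q/C/T):=\Lb^+(Q_0/C_0/T)\cap \Lb(Q/C/T)$ from the corresponding property on $C_0$ together with a short averaging argument over $\Gamma$.

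First I would verify the identification $\Lb(Q/C/T)\cong \Lb(Q_0/C_0/T)^\Gamma$: since $b$ is Galois with group $\Gamma$, the adjunction unit identifies an \'{e}tale sheaf $\mcF$ on $C$ with the $\Gamma$-invariants in $b_*b^*\mcF$, and applying this to $\mcF=\text{Hom}_{C-\text{gr}}(\Gm{m,C},Q)$ followed by pushforward along $\pi$ (using $\pi_0=\pi\circ b$) gives the claim. Three of the four required properties then follow formally: $\Lb^+(Q/C/T)$ is open and closed in $\Lb(Q/C/T)$ as the restriction of an open and closed subscheme; it is a submonoid scheme as the intersection of two submonoids; and it is sharp because any subgroup scheme it contains is already contained in $\Lb^+(Q_0/C_0/T)$, which is sharp by hypothesis.

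For finite index, given a geometric point $e$ of $\Lb(Q/C/T)$, the hypothesis on $C_0$ provides $m>0$ and $e_\pm\in\Lb^+(Q_0/C_0/T)$ with $me=e_+-e_-$. Applying the trace $\sum_{\gamma\in\Gamma}\gamma$ and using that $me$ is $\Gamma$-fixed yields $|\Gamma|\cdot m\cdot e = \tilde e_+ - \tilde e_-$, where $\tilde e_\pm := \sum_\gamma \gamma e_\pm$; since $\Lb^+(Q_0/C_0/T)$ is a $\Gamma$-stable submonoid, each $\tilde e_\pm$ lies in $\Lb^+(Q_0/C_0/T)^\Gamma=\Lb^+(Q/C/T)$. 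Hence $|\Gamma|\cdot m\cdot e$ lies in the subgroup generated by $\Lb^+(Q/C/T)$, which therefore has finite index in the finitely generated abelian group $\Lb(Q/C/T)$. Saturatedness transfers directly: if $me\in\Lb^+(Q/C/T)$, then $me\in\Lb^+(Q_0/C_0/T)$, so by saturation on $C_0$ one has $e\in\Lb^+(Q_0/C_0/T)$, and since $e$ is $\Gamma$-fixed we get $e\in\Lb^+(Q/C/T)$.

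The main obstacle is the finite-generation case, because a submonoid of a finitely generated commutative monoid need not be finitely generated, so the result cannot be obtained by restriction alone. Working fiberwise over geometric points of $T$, my plan is to choose generators $m_1,\dots,m_k$ of $\Lb^+(Q_0/C_0/T)$ and study the monoid surjection $\phi:\NN^k\twoheadrightarrow \Lb^+(Q_0/C_0/T)$, $(a_i)\mapsto \sum_i a_i m_i$. The preimage $\phi^{-1}(\Lb(Q/C/T))$ is cut out in $\NN^k$ by the finite system of $\ZZ$-linear equations $\sum_i a_i(m_i-\gamma m_i)=0$ for $\gamma\in\Gamma$; by Gordan's lemma, the monoid of nonnegative integer solutions of such a system is finitely generated, and the image under $\phi$ of any finite generating set provides a finite generating set for $\Lb^+(Q/C/T)$.
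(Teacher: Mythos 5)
Your proposal is correct and follows essentially the same route as the paper: intersect with the $\Gamma$-invariants, establish finite index by averaging over $\Gamma$ (your $me=e_+-e_-$ decomposition is just the paper's $\ZZ$-linear combination $\sum_i c_i f_i$ split into positive and negative parts), and transfer saturation directly. For finite generation the paper simply cites the fact that intersecting a finitely generated submonoid of $\ZZ^\rho$ with a subgroup yields a finitely generated monoid (``vertex enumeration''), and your Gordan's-lemma argument via the presentation $\NN^k\twoheadrightarrow\Lb^+(Q_0/C_0/T)$ is a clean, explicit proof of exactly that fact rather than a different approach.
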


\begin{proof}
The intersection $\Lb^+(Q/C/T)$ is an open and closed subscheme that is a
subsemigroup scheme and contains only the trivial group scheme.  It
remains to check that $\Lb^+(Q/C/T)$ generates a finite index
subgroup scheme.  This can be checked on geometric fibers over $T$.
Denote by $m$ the index in $\Lb(Q_0/C_0/T)$ of the subgroup generated
by $\Lb^+(Q/C/T)$.  Denote by $n$ the order of $\Gamma$.  For
every geometric point $e\in \Lb(Q/C/T)$, by hypothesis, the
element $m\cdot e$ in $\Lb(Q_0/C_0/T)$ is a $\ZZ$-linear combination of
elements of $\Lb^+(Q_0/C_0/T)$, say
$$
m\cdot e = \sum_i c_i f_i, \ \ c_i\in \ZZ,\ \  f_i\in \Lb^+(Q_0/C_0/T).
$$
Then also
$$
(n\cdot m)\cdot e = \sum_{\gamma\in \Gamma} \gamma\bullet(m\cdot e) =
\sum_i c_i (\sum_{\gamma \in \Gamma} \gamma\bullet f_i).
$$
By hypothesis, each $\gamma\bullet f_i$ is in $\Lb^+(Q_0/C_0/T)$.
Thus the sum $\sum_i \gamma\bullet f_i$ is an element of
$\Lb^+(Q_0/C_0/T)$ that is
$\Gamma$-invariant.  By descent, this is an element in
$\Lb(Q/C/T)$, thus it is in $\Lb^+(Q/C/T)$.  So the subgroup
generated by $\Lb^+(Q/C/T)$ has finite index dividing $n\cdot m$.

\mni
The intersection of a saturated submonoid of an Abelian group with a
subgroup is also saturated.  Thus, 
if $\Lb^+(Q_0/C_0/T)$ is saturated, then $\Lb^+(Q/C/T)$ is
also saturated.  Over geometric points of $T$, $\Lb(Q_0/C_0/T)$ is
isomorphic to $\ZZ^\rho$.  Assume that $\Lb^+(Q_0/C_0/T)$ is a
finitely generated monoid.
The intersection of any finite collection
of finitely generated monoids in $\ZZ^\rho$ is a finitely generated
monoid (the problem of algorithmically finding a finite set of
generators is known as the ``vertex enumeration problem'').  In
particular, the intersection of the finitely generated monoid
$\Lb^+(Q_0/C_0/T)$ and the finitely generated monoid (even
subgroup) $\Lb(Q/C/T)$ is a finitely generated monoid
$\Lb^+(Q_0/C_0/T)$.  
\end{proof}

\section{Acknowledgments} \label{sec-ack}
\marpar{sec-ack}

\mni
I am grateful
to Aise Johan de Jong and Eduardo Esteves who each 
pointed out that an Abel map
as in \cite{dJHS} must exist under weaker hypotheses. 
I thank Yi Zhu for many
discussions about Abel maps through the years.
I thank Chenyang Xu for many discussions; this work grew from our
joint project on rational points over global function fields for
rationally simply connected varieties.
I was supported
by NSF Grants DMS-0846972 and DMS-1405709, as well as a Simons
Foundation Fellowship. 

\bibliography{my}
\bibliographystyle{alpha}

\end{document}